\numberwithin{equation}{section}
\numberwithin{figure}{section}
\theoremstyle{plain}
\newtheorem{thm}{\protect\theoremname}[section]
  \theoremstyle{definition}
  \newtheorem{defn}[thm]{\protect\definitionname}
  \theoremstyle{remark}
  \newtheorem*{rem*}{\protect\remarkname}
  \theoremstyle{definition}
  \newtheorem{condition}[thm]{\protect\conditionname}
  \theoremstyle{remark}
  \newtheorem*{acknowledgement*}{\protect\acknowledgementname}
  \theoremstyle{plain}
  \newtheorem{lem}[thm]{\protect\lemmaname}
  \theoremstyle{remark}
  \newtheorem{rem}[thm]{\protect\remarkname}
  \theoremstyle{plain}
  \newtheorem{prop}[thm]{\protect\propositionname}
  \theoremstyle{definition}
  \newtheorem*{example*}{\protect\examplename}
  \theoremstyle{plain}
  \newtheorem{cor}[thm]{\protect\corollaryname}
  \providecommand{\acknowledgementname}{Acknowledgement}
  \providecommand{\conditionname}{Condition}
  \providecommand{\corollaryname}{Corollary}
  \providecommand{\definitionname}{Definition}
  \providecommand{\examplename}{Example}
  \providecommand{\lemmaname}{Lemma}
  \providecommand{\propositionname}{Proposition}
  \providecommand{\remarkname}{Remark}
\providecommand{\theoremname}{Theorem}
\begin{document}

\title{\textup{On rational singularities and counting points of schemes
over finite rings}}

\author{Itay Glazer}

\address{Faculty of Mathematics and Computer Science, Weizmann Institute of
Science, 234 Herzl Street, Rehovot 76100, Israel.}

\email{itay.glazer@weizmann.ac.il}
\begin{abstract}
\begin{singlespace}
We study the connection between the singularities of a finite type
$\mathbb{Z}$-scheme $X$ and the asymptotic point count of $X$ over
various finite rings. In particular, if the generic fiber $X_{\mathbb{Q}}=X\times_{\mathrm{Spec}\mathbb{Z}}\mathrm{Spec}\mathbb{Q}$
is a local complete intersection, we show that the boundedness of
$\frac{\left|X(\mathbb{Z}/p^{n}\mathbb{Z})\right|}{p^{n\mathrm{dim}X_{\mathbb{Q}}}}$
in $p$ and $n$ is in fact equivalent to the condition that $X_{\mathbb{Q}}$
is reduced and has rational singularities. This paper completes the
main result in \cite{AA} (see \cite[Theorem 3.0.3]{AA}). 
\end{singlespace}
\end{abstract}

\maketitle
\begin{small}\tableofcontents{}

\end{small}

\raggedbottom

\section{Introduction}

\subsection{Motivation }

Given a finite type $\mathbb{Z}$-scheme $X$, the study of the quantity
$\left|X(\mathbb{Z}/m\mathbb{Z})\right|$ and its asymptotic behavior
is a fundamental question in number theory. The case when $m=p$,
or more generally the quantity $\left|X(\mathbb{F}_{q})\right|$ with
$q=p^{n}$, has been studied by many authors, most famously by Weil,
Lang, Dwork, Grothendieck and Deligne \cite{LW54,Dwo60,Gro65,Del74,Del80}.
The Lang-Weil estimates (see \cite{LW54}) give a good asymptotic
description of $\left|X(\mathbb{F}_{q})\right|$:
\[
\left|X(\mathbb{F}_{q})\right|=q^{\mathrm{dim}X_{\mathbb{F}_{q}}}(C_{X}+O(q^{-1/2})),
\]
 where $C_{X}$ is the number of top dimension irreducible components
of $X_{\overline{\mathbb{F}}_{q}}$ that are defined over $\mathbb{F}_{q}$.
From these estimates and the fact that 
\begin{equation}
\left|X(F)\right|=\left|U(F)\right|+\left|(X\backslash U)(F)\right|,
\end{equation}
for any open subscheme $U\subseteq X$ and any finite field $F$,
it follows that the asymptotics of $\left|X(\mathbb{F}_{p^{n}})\right|$,
in $p$ or in $n$, does not depend on the singularity properties
of $X$. For finite rings, however, $(1.1)$ is no longer true (e.g
$\left|\mathbb{A}^{1}(A)\right|=\left|A\right|$ and $\left|\left(\mathbb{A}^{1}-\{0\}\right)(A)\right|=\left|A^{\times}\right|$)
and indeed, the number $\left|X(\mathbb{Z}/m\mathbb{Z})\right|$ and
its asymptotics have much to do with the singularities of $X$. The
case when $m=p^{n}$ is a prime power was studied, among others, by
Borevich, Shafarevich, Denef, Igusa, du Sautoy-Grunewald and Mustata
(see \cite{Den91,dSG00,Igu00} and a recent overview at \cite{Mus}).

For a finite ring $A$, set $h_{X}(A):=\frac{\left|X(A)\right|}{\left|A\right|^{\mathrm{dim}X_{\mathbb{Q}}}}$.
If $X_{\mathbb{Q}}$ is smooth, one can show that for almost every
prime $p$, we have $h_{X}(\mathbb{Z}/p^{n}\mathbb{Z})=h_{X}(\mathbb{Z}/p\mathbb{Z})$
for all $n$, which by the Lang-Weil estimates is uniformly bounded.
On the other hand, if $X_{\mathbb{Q}}$ is singular, then $h_{X}(\mathbb{Z}/p^{n}\mathbb{Z})$
need not be bounded in $n$ or in $p$. The goal of this paper is
to investigate this phenomena and to complete the main result presented
in \cite{AA}, which we describe next.

\subsection{Related work}

In \cite{AA}, Aizenbud and Avni proved the following:
\begin{thm}
\label{Raminir theorem}\cite[Theorem 3.0.3]{AA} Let $X$ be a finite
type $\mathbb{Z}$-scheme such that $X_{\mathbb{Q}}$ is equi-dimensional
and a local complete intersection. Then the following are equivalent: 

i) For any $n$, $\mathrm{lim}_{p\rightarrow\infty}h_{X}(\mathbb{Z}/p^{n}\mathbb{Z})=1$.

ii) There exists a finite set of prime numbers $S$ and a constant
$C$, such that $\left|h_{X}(\mathbb{Z}/p^{n}\mathbb{Z})-1\right|<Cp^{-1/2}$
for any prime $p\notin S$ and any $n\in\mathbb{N}$.

iii) $X_{\mathbb{\overline{Q}}}$ is reduced, irreducible and has
rational singularities. 
\end{thm}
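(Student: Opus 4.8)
The plan is to prove the cycle $(ii)\Rightarrow(i)\Rightarrow(iii)\Rightarrow(ii)$. The first implication is immediate: for fixed $n$, letting $p\to\infty$ in $\left|h_X(\mathbb{Z}/p^n\mathbb{Z})-1\right|<Cp^{-1/2}$ gives $\lim_{p\to\infty}h_X(\mathbb{Z}/p^n\mathbb{Z})=1$. For the two substantial implications I would first record the usual reductions. Covering $X$ by finitely many affine opens and, after inverting a suitable $M\in\mathbb{N}$, realizing each as $V(f_1,\dots,f_c)\subseteq\mathbb{A}^N_{\mathbb{Z}[1/M]}$ with $f_1,\dots,f_c$ a regular sequence, we may assume $X_{\mathbb{Q}}$ is locally the zero fibre of a morphism $\varphi=(f_1,\dots,f_c)\colon\mathbb{A}^N\to\mathbb{A}^c$ which is flat along $\varphi^{-1}(0)$ and of relative dimension $d:=\dim X_{\mathbb{Q}}$; after enlarging the excluded set of primes we also fix log resolutions of these charts over $\mathbb{Z}[1/M]$, with good reduction modulo every unexcluded prime and numerical data $\pi^{*}X=\sum_i N_iE_i$, $K_{Y/\mathbb{A}^N}=\sum_i(\nu_i-1)E_i$. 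Through these charts $\left|X(\mathbb{Z}/p^n\mathbb{Z})\right|$ counts the solutions of $f\equiv0\pmod{p^n}$, so Denef's formula (\cite{Den91}) and its complete-intersection version present the generating series $F_p(t):=\sum_{n\ge0}h_X(\mathbb{Z}/p^n\mathbb{Z})\,t^n$ as a rational function of $t$ whose poles lie at $t=1$ and at $\left|t\right|=p^{\nu_i/N_i-1}$, with residues polynomial in $p$ and in the Lang--Weil point counts $\left|E_I^{\circ}(\mathbb{F}_p)\right|$ of the finitely many strata of $\pi$; moreover, for $p$ large, the \emph{same} resolution data governs the counts of $\mathbb{F}_p[t]/(t^n)$-points, i.e. of $\mathbb{F}_p$-points of jet schemes. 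The two inputs I would play against one another are this rational structure together with the Lang--Weil estimates of \cite{LW54}, and Mustață's criterion relating the jet schemes of a local complete intersection to its singularities (see the overview \cite{Mus}).

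$(i)\Rightarrow(iii)$. Because for $p$ large $\left|X(\mathbb{Z}/p^n\mathbb{Z})\right|$ and $\left|J_{n-1}(X_{\mathbb{F}_p})(\mathbb{F}_p)\right|$ agree to leading order, Lang--Weil shows that $g(n):=\lim_{p\to\infty}h_X(\mathbb{Z}/p^n\mathbb{Z})$ equals $1$ exactly when $J_{n-1}(X_{\overline{\mathbb{Q}}})$ is irreducible of pure dimension $nd$ (if a jet scheme carries a component of excess dimension the quantity tends to $+\infty$; if $J_{n-1}(X_{\overline{\mathbb{Q}}})$ is reducible then along a positive-density set of primes the count is at least $2$, so the limit is $>1$ or does not exist). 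Hence $(i)$ forces $J_m(X_{\overline{\mathbb{Q}}})$ to be irreducible of dimension $(m+1)d$ for every $m\ge0$; the case $m=0$ already gives that $X_{\overline{\mathbb{Q}}}$ is irreducible (in particular reduced), and Mustață's criterion identifies the full condition with $X_{\overline{\mathbb{Q}}}$ having rational (equivalently canonical) singularities. This is $(iii)$.

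$(iii)\Rightarrow(ii)$. This is the heart of the argument and the part completing \cite{AA}. Running Mustață's criterion in the other direction: since $X_{\mathbb{Q}}$ is a reduced, geometrically irreducible local complete intersection with rational singularities, the resolution data satisfy the numerical conditions ($\nu_i\ge N_i$, with equality confined to the strict transform) which force, for every prime $p$ outside a finite set $S$, that the only pole of $F_p(t)$ in the closed unit disc is a \emph{simple} pole at $t=1$, every other pole having absolute value at least some fixed $\rho>1$; here $S$, $\rho$, and all degrees involved depend only on the fixed resolutions, not on $n$. Reading off the coefficient of $t^n$ gives $h_X(\mathbb{Z}/p^n\mathbb{Z})=A_p+O(p^{B}\rho^{-n})$, where $A_p$ is the residue of $F_p$ at $t=1$ and $B$ is a fixed exponent; and Lang--Weil applied to the strata $E_I^{\circ}$ --- with geometric irreducibility of $X_{\mathbb{Q}}$ used precisely to make the leading stratum contribute the integer $1$ rather than a larger one --- gives $A_p=1+O(p^{-1/2})$. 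For $n\ge 2B$ the remainder is already $O(p^{-1/2})$, while for the finitely many smaller $n$ one applies Lang--Weil directly to the single, now fixed, jet scheme $J_{n-1}(X_{\mathbb{F}_p})$. Combining these estimates yields $\left|h_X(\mathbb{Z}/p^n\mathbb{Z})-1\right|<Cp^{-1/2}$ for all $p\notin S$ and all $n$, which is $(ii)$.

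The step I expect to be the genuine obstacle is the uniformity in $n$, which is needed in both substantial implications. A naive argument would apply Lang--Weil to the $n$-th jet scheme $J_{n-1}(X_{\mathbb{F}_p})$, but the implied constant there a priori grows with $n$, being controlled by the degree and Betti numbers of that scheme. Circumventing this forces one to pass to a single fixed log resolution and convert the question into coefficient extraction from one fixed rational function $F_p(t)$, whose pole structure is pinned down --- uniformly in $n$ --- by Mustață's numerical criterion; and doing this over $\mathbb{Z}/p^n\mathbb{Z}$, not merely over $\mathbb{F}_p[t]/(t^n)$, so that the asymptotics of the solution count genuinely match those of the jet schemes, is the technical core that \cite{AA} leaves to be supplied and that the present paper is designed to provide.
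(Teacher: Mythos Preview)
This statement is not proved in the paper: Theorem~\ref{Raminir theorem} is quoted from \cite[Theorem~3.0.3]{AA}, and in the proof of Theorem~\ref{Main extended theorem} the equivalence of $(i)$, $(ii)$ and $(iii)$ is simply invoked as already established there. There is therefore no proof in the present paper to compare your proposal against.

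Your outline --- jet schemes, Musta\c{t}\u{a}'s irreducibility criterion, Denef's explicit formula for the local zeta function, Lang--Weil on the strata of a fixed log resolution --- is broadly the strategy of \cite{AA} itself, so as a reconstruction of that argument it is reasonable in shape (modulo the genuine care needed in passing between $\mathbb{Z}/p^n\mathbb{Z}$-points and $\mathbb{F}_p[t]/(t^n)$-points, which do not coincide on the nose for singular $X$). Your final paragraph, however, misidentifies what the present paper contributes. You write that the uniformity in $n$, and the passage from jet-scheme asymptotics over $\mathbb{F}_p[t]/(t^n)$ to actual counts over $\mathbb{Z}/p^n\mathbb{Z}$, is ``the technical core that \cite{AA} leaves to be supplied and that the present paper is designed to provide''. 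It is not: \cite{AA} proves $(i)\Leftrightarrow(ii)\Leftrightarrow(iii)$ in full. What the present paper adds are the \emph{further} equivalent conditions $(iv)$, $(iv')$, $(v)$ of Theorem~\ref{Main extended theorem} --- that mere boundedness of $h_X(\mathbb{Z}/p^n\mathbb{Z})$, even non-uniformly in $p$, already forces rational singularities --- and the tool it develops for this, Theorem~\ref{Main auxilary theorem} (the bounded-eccentricity relaxation of the analytic $(FRS)$ criterion), is orthogonal to the jet-scheme/Denef route you describe.
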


The following definition was introduced in \cite{AA16}: 
\begin{defn}
\cite[1.2, Definition II]{AA16} Let $X$ and $Y$ be smooth varieties
over a field $k$ of characteristic $0$. We say that a morphism $\varphi:X\rightarrow Y$
is $(FRS)$ if it is flat and any geometric fiber is reduced and has
rational singularities. We say that $\varphi$ is $(FRS)$ at $x\in X(k)$
if there exists a Zariski open neighborhood $U$ of $x$ such that
$U\times_{Y}\{\varphi(x)\}$ is reduced and has rational singularities.
\end{defn}

Aizenbud and Avni introduced an analytic criterion for a morphism
$\varphi$ to be $(FRS)$, which played a key role in the proof of
Theorem \ref{Raminir theorem}:
\begin{thm}
\label{Auxilery Raminir theorem} \cite[Theorem 3.4]{AA16} Let $\varphi:X\rightarrow Y$
be a map between smooth algebraic varieties defined over a finitely
generated field $k$ of characteristic $0$, and let $x\in X(k)$.
Then the following conditions are equivalent: 

a) $\varphi$ is $(FRS)$ at $x$.

b) There exists a Zariski open neighborhood $x\in U\subseteq X$,
such that for any non-Archimedean local field $F\supseteq k$ and
any Schwartz measure $m$ on $U(F)$, the measure $(\varphi|_{U(F)})_{*}(m)$
has continuous density (see Definition \ref{def:Schwartz measure}
for the notion of Schwartz/continuous density of a measure).

c) For any finite extension $k'/k$, there exists a non-Archimedean
local field $F\supseteq k'$ and a non-negative Schwartz measure $m$
on $X(F)$ that does not vanish at $x$ such that $\varphi_{*}(m)$
has continuous density. 
\end{thm}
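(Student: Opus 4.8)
The plan is to prove (b)$\Rightarrow$(c) trivially and to close the cycle via (a)$\Rightarrow$(b) and (c)$\Rightarrow$(a); the last implication will then also show that the \emph{a priori} much weaker condition (c) --- one local field, one measure --- already forces the strong local property (a). For (b)$\Rightarrow$(c) it suffices to take $m$ to be any non-negative Schwartz measure supported in a small neighborhood of $x$ and not vanishing at $x$. Since all three conditions are local at $x$, I would first shrink $X$ to an affine open neighborhood of $x$ on which \'etale coordinates identify a neighborhood of $\varphi(x)$ in $Y$ with $\mathbb{A}^{d}$, $d=\dim Y$, so that $\varphi=(f_{1},\dots,f_{d})$ and the relevant fiber is the local complete intersection $Z:=\{f_{1}=\cdots=f_{d}=0\}$ of dimension $\dim X-d$. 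The analytic side would be handled by $p$-adic (and, for the archimedean analogue, real) integration on a Hironaka resolution, reducing every integral to a finite sum of monomial ones in the style of Igusa's local zeta functions \cite{Igu00}; the geometric side rests on translating ``$Z$ reduced with rational singularities'' into numerical conditions along such a resolution.

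For (a)$\Rightarrow$(b): after shrinking, assume $\varphi$ is flat near $x$ and $Z$ is reduced with rational singularities; being a local complete intersection, $Z$ is then normal and Gorenstein, hence has canonical singularities. I would fix a log resolution $\pi\colon X'\rightarrow X$ of $(X,Z)$ on which each $\pi^{*}f_{j}$ and the Jacobian of $\pi$ are monomials with respect to simple normal crossings coordinates; write $b_{i}$ for the multiplicity of the Jacobian along the component $E_{i}$ and $a_{ij}$ for that of $\pi^{*}f_{j}$. The canonicity and reducedness of $Z$ translate, via inversion of adjunction, into a finite list of inequalities among the $a_{ij}$ and $b_{i}$ on the components meeting $\pi^{-1}(Z)$ (the exact form is the one in \cite{AA16}). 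For a Schwartz measure $m$ on the chosen $U(F)$, the density of $\varphi_{*}(m)$ at $y$ near $\varphi(x)$ becomes, after the change of variables $\pi$ and the $p$-adic coarea formula, an absolutely convergent integral over $X'(F)$ of $\pi^{*}m$ weighted by $\prod_{i}|u_{i}|_{F}^{b_{i}}$ and cut out by the conditions that the monomials $\prod_{i}|u_{i}|_{F}^{a_{ij}}$ match the coordinates of $y-\varphi(x)$. The numerical inequalities are precisely what force each of the finitely many resulting monomial integrals to converge and to depend continuously on $y$, uniformly near $\varphi(x)$; summing them yields the continuous density. The same local computation gives the real/complex analogue.

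For (c)$\Rightarrow$(a) I would argue by contraposition. If $\varphi$ is not flat at $x$, then by miracle flatness the fiber through $x$ has dimension $>\dim X-d$ at $x$, and comparing the pushforward of a fixed positive measure near $x$ with the volumes of nearby fibers shows $\varphi_{*}(m)$ has unbounded density near $\varphi(x)$; so assume $\varphi$ is flat at $x$. Since the non-reduced and non-rational loci of $Z$ are closed in $Z$, the assumption that no neighborhood of $x$ has reduced rational fiber forces $x$ to lie in one of them. On a log resolution as above, there is then a component $E$ with $x\in\pi(E)$ for which the relevant inequality fails. I would then exploit the hypotheses on $m$: as $m\geq 0$ and $m(x)\neq 0$, on a small neighborhood of $x$ the measure $m$ dominates a fixed positive smooth measure; pulling this back to $X'(F)$ and restricting the density integral of $\varphi_{*}(m)$ at $y$ to a $p$-adic tube around a point of $E$ over $x$, the offending exponent makes this lower bound diverge as $y\rightarrow\varphi(x)$. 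Non-negativity of $m$ is essential: it rules out cancellation with the other components, so the density of $\varphi_{*}(m)$ cannot be continuous, indeed not even locally bounded, near $\varphi(x)$ --- contradicting (c).

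The hard part is (c)$\Rightarrow$(a): turning a purely algebro-geometric pathology of one fiber (a violated discrepancy-type inequality, or non-reducedness) into an \emph{unconditional} divergence of $\varphi_{*}(m)$, valid for \emph{every} non-negative Schwartz measure $m$ with $m(x)\neq 0$ and over some single non-Archimedean local field. Two points need care: (i) pinning down the exact dictionary between ``$Z$ reduced with rational singularities'' and the inequalities among the $a_{ij}$ and $b_{i}$ relative to $\varphi$, especially the boundary cases, where the monomial integrals are only marginally convergent and the continuity estimate in (a)$\Rightarrow$(b) is most delicate --- this is where the identification of rational with canonical singularities for local complete intersections and inversion of adjunction come in; and (ii) the book-keeping that makes the resolution-and-tube argument localize genuinely at $x$, handles the ``every Zariski neighborhood of $x$'' quantifier in the definition of (FRS) at a point, and dispatches the non-flat case. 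The rest --- the trivial (b)$\Rightarrow$(c) and the reduction of the integrals to monomial $p$-adic ones --- is a standard application of resolution of singularities and Igusa-type estimates.
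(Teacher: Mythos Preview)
This theorem is quoted from \cite{AA16} and is not proved in the present paper; what the paper does prove is the extension Theorem~\ref{Main auxilary theorem}, and its proof of $c')\Rightarrow a)$ in Section~3.2 is explicitly modeled on the proof of $c)\Rightarrow a)$ in \cite[Section 3.7]{AA16}. So the relevant comparison is between your outline and that argument.

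Your route is genuinely different. For $c)\Rightarrow a)$ you go through a log resolution of the pair $(X,Z)$, translate ``$Z$ reduced with rational singularities'' into discrepancy-type inequalities via inversion of adjunction, and then argue that a violated inequality forces a monomial lower bound on the density to diverge. The paper (following \cite{AA16}) instead splits the argument in two: first, a direct analytic estimate shows that if the smooth locus of $\varphi$ is not dense in the fiber through $x$, then on a component where $\dim\ker d\varphi$ is too large one can build explicit ``boxes'' $A_\epsilon$ whose pushforward already makes the density blow up --- this yields flatness and reducedness without any resolution; second, rational singularities are obtained not by exhibiting a bad exceptional divisor but by invoking the integral criterion \cite[Corollary 3.15]{AA16} (for Gorenstein $Z$, rational singularities are equivalent to $\int f|\omega_Z|<\infty$), and this integral is bounded by the continuous density of $\varphi_*(m)$ via Proposition~\ref{Prop 2.7} and monotone convergence. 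Your approach has the virtue of being self-contained and treating both directions with the same resolution; the paper's approach avoids the bookkeeping of inversion of adjunction and isolates the rational-singularities step into a clean black box. One point you should tighten: in the contrapositive of $c)\Rightarrow a)$ you need divergence for \emph{every} $F\supseteq k'$ once $k'$ is large enough to define your resolution, not just for ``some single'' $F$; this is implicit in your argument (the numerical data are field-independent) but should be said.
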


\subsection{Main results}

In this paper, we generalize Theorem \ref{Raminir theorem} as follows: 
\begin{thm}
\label{Main theorem} Let $X$ be a finite type $\mathbb{Z}$-scheme
such that $X_{\mathbb{Q}}$ is equi-dimensional and a local complete
intersection. Then i), ii) and iii) in Theorem \ref{Raminir theorem}
are also equivalent to:

iv) $X_{\overline{\mathbb{Q}}}$ is irreducible and there exists $C>0$
such that $h_{X}(\mathbb{Z}/p^{n}\mathbb{Z})<C$ for any prime $p$
and any $n\in\mathbb{N}$.

v) $X_{\overline{\mathbb{Q}}}$ is irreducible and there exists a
finite set of primes $S$, such that for any $p\notin S$ , the sequence
$n\mapsto h_{X}(\mathbb{Z}/p^{n}\mathbb{Z})$ is bounded.
\end{thm}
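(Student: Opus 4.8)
The plan is to deduce Theorem \ref{Main theorem} from Theorem \ref{Raminir theorem} by showing that the new conditions (iv) and (v) are squeezed between (ii) and (iii). One direction is immediate: condition (ii) of Theorem \ref{Raminir theorem}, together with the fact that (ii) implies (iii) (so in particular $X_{\overline{\mathbb{Q}}}$ is irreducible), yields both (iv) and (v), since $|h_X(\mathbb{Z}/p^n\mathbb{Z})-1|<Cp^{-1/2}$ forces $h_X(\mathbb{Z}/p^n\mathbb{Z})<1+C$ uniformly. Likewise (iv) trivially implies (v). So the whole content is the implication (v)$\Rightarrow$(iii) (equivalently (v)$\Rightarrow$(i), or (v)$\Rightarrow$(ii)); once that is in hand, the cycle closes.

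For the implication (v)$\Rightarrow$(iii), I would argue by contraposition: assume $X_{\overline{\mathbb{Q}}}$ is irreducible (this is part of the hypothesis of (v)) but that $X_{\mathbb{Q}}$ is \emph{not} reduced with rational singularities, and produce a prime $p$ outside any prescribed finite set $S$ for which $n\mapsto h_X(\mathbb{Z}/p^n\mathbb{Z})$ is unbounded. The mechanism should be the analytic criterion of Theorem \ref{Auxilery Raminir theorem}, exactly as in \cite{AA}. Concretely: since $X_{\mathbb{Q}}$ is an l.c.i. that fails to be reduced with rational singularities, embed (an open piece of) $X$ as a fiber of a map $\varphi:\widetilde{X}\to \mathbb{A}^c$ between smooth schemes over $\mathbb{Z}[1/N]$, where $c$ is the codimension, so that $X$ is the zero-fiber $\varphi^{-1}(0)$; the failure of (FRS) at some point then means, via Theorem \ref{Auxilery Raminir theorem}, that for infinitely many local fields $F=\mathbb{Q}_p$ the pushforward $\varphi_*(m)$ of a suitable Schwartz measure does \emph{not} have continuous (or even bounded) density near $0$. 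The standard point-counting/integration dictionary identifies $h_X(\mathbb{Z}/p^n\mathbb{Z})$, up to harmless normalizing factors, with $p^{nc}\cdot \mu\big(\varphi^{-1}(p^n\mathbb{Z}_p^c)\big)$, i.e. with an average of the density of $\varphi_*(m)$ over the ball of radius $p^{-n}$ around $0$; unboundedness of that density as we shrink the ball translates into unboundedness of $h_X(\mathbb{Z}/p^n\mathbb{Z})$ in $n$. One must also check that the discrepancy between $X$ and $\widetilde{X}$-with-a-chosen-presentation, and between ``reduced with rational singularities'' and genuine (FRS)-ness of $\varphi$, only affects finitely many primes — this is where equi-dimensionality and the l.c.i. hypothesis are used, via spreading-out arguments, so the bad prime can indeed be chosen outside $S$.

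I expect the main obstacle to be precisely the quantitative matching between the \emph{qualitative} failure supplied by Theorem \ref{Auxilery Raminir theorem}(b)/(c) — ``the density is not continuous'' — and the \emph{uniform-in-$n$} unboundedness of $h_X(\mathbb{Z}/p^n\mathbb{Z})$ that (v) asks us to contradict. Non-continuity at a point does not on its face prevent the density from being bounded, nor does boundedness of each average over a shrinking ball follow from boundedness of the density; so one needs a sharper version of the non-(FRS) $\Rightarrow$ analytic-pathology implication, presumably extracting from the failure of rational singularities an actual blow-up rate (a resolution of singularities / log-canonical-threshold computation, or a Denef-type cone decomposition of the relevant $p$-adic integral) that exhibits $\varphi_*(m)$ with density growing at least like a negative power of the distance to $0$, hence $h_X(\mathbb{Z}/p^n\mathbb{Z})\to\infty$. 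The reduced-but-not-rational and non-reduced cases may need to be handled separately: in the non-reduced case the generic fiber drops dimension along a component, which already makes $p^{nc}\mu(\cdots)$ blow up by a dimension count, while the reduced-non-rational case is the genuinely delicate one and is where the motivic/$p$-adic integration input from \cite{AA}, \cite{AA16} does the real work. Assembling these, together with the elementary implications above, completes the equivalence.
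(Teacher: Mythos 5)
Your proposal has two genuine gaps, one of which you flag yourself and one of which you miss entirely.

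The gap you miss: you assert that (ii), combined with (ii)$\Rightarrow$(iii), ``yields both (iv) and (v).'' This is false for (iv). Condition (ii) bounds $h_X(\mathbb{Z}/p^n\mathbb{Z})$ only for primes $p\notin S$, whereas (iv) demands a bound for \emph{every} prime. Passing from almost all $p$ to all $p$ is a real difficulty: the chosen $\mathbb{Z}$-model of $X$ may degenerate at the primes in $S$ (non-flatness, non-reducedness of the special fiber, etc.), and nothing in (ii) controls $h_X(\mathbb{Z}/p^n\mathbb{Z})$ there. The paper devotes an entire section to the implication $iii)\Rightarrow iv')$: when $X$ itself is a complete intersection in affine space over $\mathbb{Z}$, it combines Elkik's openness of the (FRS) locus with compactness of $\mathbb{Z}_q^M$ to bound the density of $\varphi_*\mu$ even at bad primes; when only $X_\mathbb{Q}$ is a (CIA), it constructs a $\mathbb{Z}$-model $\widehat{X}$ that is a (CIA) together with a morphism $\phi:X\to\widehat{X}$ inducing an isomorphism over $\mathbb{Q}$, and shows $|X(\mathbb{Z}_q/\mathfrak{m}_q^n)|\le q^{Nc}|\widehat{X}(\mathbb{Z}_q/\mathfrak{m}_q^n)|$; the general (LCI) case needs a further covering argument with explicitly built $\mathbb{Z}$-models $\widetilde{U}_i$ of the affine pieces. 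Your sketch omits all of this.

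The gap you flag but do not fill: for $v)\Rightarrow iii)$ you correctly note that Theorem \ref{Auxilery Raminir theorem} converts (FRS) into \emph{continuity} of the pushforward density, which is strictly stronger than the \emph{boundedness} that (v) supplies, and that ``density not continuous'' does not give ``density not bounded.'' You propose repairing this with an LCT/resolution-of-singularities computation extracting an explicit polynomial blow-up rate, but you do not carry it out, and your suggested split into ``non-reduced'' and ``reduced non-rational'' cases is not what actually closes the gap. The paper instead strengthens the analytic criterion itself (Theorem \ref{Main auxilary theorem}): it introduces local bases of \emph{bounded eccentricity} and proves that boundedness of $\varphi_*(m)$ with respect to \emph{some} such local basis already forces (FRS). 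The example $\varphi(x,y)=x^2$ in Section 3 shows that without the eccentricity restriction the relaxed condition is genuinely too weak, so this notion is essential, not cosmetic. Since $\{p^n\mathbb{Z}_p^N\}_n$ has bounded eccentricity, boundedness of $n\mapsto h_X(\mathbb{Z}/p^n\mathbb{Z})$ translates directly into the hypothesis of the strengthened criterion, giving $v)\Rightarrow iii)$ without any contrapositive or blow-up-rate analysis. Your route might in principle be made to work, but as written it stops exactly at the two places where the paper's new technical input is needed.
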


\begin{rem*}
In fact, one can drop the demand that $X_{\overline{\mathbb{Q}}}$
is irreducible in conditions $iii),iv)$ and $v)$, such that they
will stay equivalent. For a slightly stronger statement- see Theorem
\ref{Main extended theorem}.
\end{rem*}
There are two main difficulties in the proof of Theorem \ref{Main theorem}.
The first one is portrayed in the fact that condition $v)$ seems
a-priori too weak, as it requires the bound on $h_{X}(\mathbb{Z}/p^{n}\mathbb{Z})$
to be uniform only in $n$, while in condition $ii)$, the demand
is that the bound is uniform both in $p$ and in $n$.

In order to show that condition $v)$ implies the other conditions,
we first reduce to the case when $X_{\mathbb{Q}}$ is a complete intersection
in an affine space, and thus can be written as the fiber at $0$ of
a morphism $\varphi:\mathbb{A}_{\mathbb{Q}}^{M}\rightarrow\mathbb{A}_{\mathbb{Q}}^{N}$,
which is flat above $0$. We can then translate condition $iii)$,
i.e the condition that $X_{\overline{\mathbb{Q}}}$ is reduced and
has rational singularities, to the condition that $\varphi:\mathbb{A}_{\mathbb{Q}}^{M}\rightarrow\mathbb{A}_{\mathbb{Q}}^{N}$
is (FRS) above $0$, i.e at any point $x\in\left(\varphi^{-1}(0)\right)(\overline{\mathbb{Q}})$.
After some technical argument, one can show that condition $v)$ implies
the following:
\begin{condition}
\label{cond:For-any-finite}For any finite extensions $k/\mathbb{Q}$
and $k'/k$, and any $x\in\left(\varphi^{-1}(0)\right)(k)$, there
exists a prime $p$ with $k'\hookrightarrow\mathbb{Q}_{p}$, $x\in\left(\varphi^{-1}(0)\right)(\mathbb{Z}_{p})$,
such that the sequence $n\mapsto\frac{\varphi_{*}(\mu)(p^{n}\mathbb{Z}_{p}^{N})}{p^{-nN}}$
is bounded, where $\mu$ is the normalized Haar measure on $\mathbb{Z}_{p}^{M}$.
\end{condition}

Hence, we would like to generalize Theorem \ref{Auxilery Raminir theorem},
such that Condition \ref{cond:For-any-finite} will imply the $(FRS)$
property of $\varphi$ above $0$. 

The measure $\varphi_{*}(m)$ as in Condition \ref{cond:For-any-finite}
is said to be bounded with respect to the local basis $\{p^{n}\mathbb{Z}_{p}^{N}\}_{n}$
for the topology of $\mathbb{Q}_{p}^{N}$ at $0$ (see Definition
\ref{def:3.1}).We introduce the notion of bounded eccentricity of
a local basis to the topology of an $F$-analytic manifold (Section
\ref{subsec:Local-basis-of}), and prove the following generalization
of Theorem \ref{Auxilery Raminir theorem}:
\begin{thm}
\label{Main auxilary theorem} Let $\varphi:X\rightarrow Y$ be a
map between smooth algebraic varieties defined over a finitely generated
field $k$ of characteristic $0$, and let $x\in X(k)$. Then a),
b), c) in Theorem \ref{Auxilery Raminir theorem} are also equivalent
to:

c') For any finite extension $k'/k$, there exists a non-Archimedean
local field $F\supseteq k'$ and a non-negative Schwartz measure $m$
on $X(F)$ that does not vanish at $x$, such that $\varphi_{*}(m)$
is bounded with respect to\textbf{ }some\textbf{ }local basis $\mathcal{N}$
of bounded eccentricity at $\varphi(x)$.
\end{thm}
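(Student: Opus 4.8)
The strategy is to show the implications $b) \Rightarrow c') \Rightarrow c)$, since $c) \Rightarrow a) \Rightarrow b)$ is already contained in Theorem \ref{Auxilery Raminir theorem}. The implication $b) \Rightarrow c')$ should be easy: if $\varphi_*(m)$ has continuous density for every Schwartz measure $m$ on $U(F)$, then in particular it is locally bounded near $\varphi(x)$, hence bounded with respect to \emph{any} local basis at $\varphi(x)$, and the standard local basis of balls $\{y : |y - \varphi(x)| \le c^n\}$ visibly has bounded eccentricity; one just needs to produce a non-negative Schwartz $m$ that does not vanish at $x$ (e.g.\ the indicator of a small compact open polydisc around $x$). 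So the entire content is in $c') \Rightarrow c)$: upgrading "bounded with respect to one local basis of bounded eccentricity" to "has continuous density", which by Theorem \ref{Auxilery Raminir theorem}(c) is exactly what we need.

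For $c') \Rightarrow c)$ the plan is as follows. Fix the local field $F \supseteq k'$, the measure $m$, and the local basis $\mathcal{N} = \{N_i\}$ of bounded eccentricity at $y_0 := \varphi(x)$ furnished by $c')$. First I would reduce to an affine/coordinate situation: shrink $Y$ so that it is étale over $\mathbb{A}^N$ near $y_0$, transport the local basis and the pushforward measure, and arrange $y_0 = 0$. The key analytic input is that $\varphi_*(m)$, being bounded with respect to $\mathcal{N}$, has density $g \in L^1_{loc}$ satisfying $\frac{1}{\mu(N_i)}\int_{N_i} g \, d\mu \le C$ for all $i$; here $\mu$ is Haar measure on $Y(F) \cong F^N$ locally. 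The definition of bounded eccentricity should say that each $N_i$ contains a ball $B(0, r_i^-)$ and is contained in a ball $B(0, r_i^+)$ with $r_i^+ / r_i^- \le K$ uniformly, and $r_i \to 0$. Using this, for any ball $B(0,r)$ one can sandwich it between two members $N_i \subseteq B(0,r) \subseteq N_j$ with comparable measures (up to a constant depending only on $K$ and the residue cardinality $q$), so that boundedness with respect to $\mathcal{N}$ forces $\frac{1}{\mu(B(0,r))}\int_{B(0,r)} g\, d\mu \le C'$ for \emph{all} small balls centered at $0$.

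Now the issue is that $c)$ (and the proof of Theorem \ref{Auxilery Raminir theorem}) wants continuity of the density, or at least local boundedness near $x$ of $\varphi_*(m)$, not merely a Lebesgue-point-type bound at the single point $0$. Here I would invoke the semi-continuity/uniformity machinery already used in \cite{AA16}: the density $g$ of $\varphi_*(m)$ is, after the standard resolution/desingularization of $\varphi$, a finite sum of terms of the form (absolute value of a monomial in local coordinates) times a locally constant function, integrated along fibers — in particular $g$ has a controlled "motivic" shape, and for such functions an averaged bound $\frac{1}{\mu(B(0,r))}\int_{B(0,r)} g \le C'$ at one point propagates to an honest pointwise bound $g(0) < \infty$, and moreover, running the same argument at nearby points (the hypothesis $c'$ is really about one point but the FRS property we are testing is a Zariski-local condition), gives boundedness on a neighborhood. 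Concretely I would use the equivalence $a) \Leftrightarrow c)$ in the contrapositive: if $\varphi$ is \emph{not} (FRS) at $x$, then by the log-resolution analysis there is a component of the resolution over which the relevant log-discrepancy is $\le 0$, which forces, for \emph{every} non-vanishing non-negative $m$ and \emph{every} $F$, the averaged integrals $\frac{1}{\mu(B(0,r))}\int_{B(0,r)} g\, d\mu$ to blow up as $r \to 0$ (this is the quantitative "non-rational $\Rightarrow$ unbounded density" estimate underlying \cite{AA16}, essentially a computation with Igusa-type local zeta functions). That blow-up contradicts the averaged bound we derived from $c')$ via the bounded-eccentricity sandwiching, so $\varphi$ must be (FRS) at $x$, i.e.\ $a)$ holds.

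The main obstacle I anticipate is the second half: making precise and robust the step that converts an averaged (ball-mean) bound at a single point into a contradiction with non-(FRS)-ness. One has to be careful that the bounded-eccentricity condition genuinely lets one compare $\mathcal{N}$-averages with ball-averages with a \emph{single} multiplicative constant independent of $i$ (this is where "eccentricity" must be defined so as to control both the inner and outer radius \emph{and} the ratio of measures, not just the ratio of radii — in a non-Archimedean setting these coincide, but one should state it carefully, perhaps allowing $N_i$ to be finite unions of balls and requiring the total measure to be comparable to that of the circumscribed ball). And one must ensure the Igusa-zeta-function blow-up estimate is available in the averaged form $\limsup_{r\to 0} \frac{1}{\mu(B(0,r))} \int_{B(0,r)} g\,d\mu = \infty$ rather than merely $\operatorname{esssup} g = \infty$; this requires knowing that the "bad" locus where $g$ is large accumulates at $y_0$ with positive relative density in small balls, which follows from the cell-decomposition / resolution description of $g$ but deserves a clean lemma. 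Everything else — the reductions, the choice of test measure, transporting measures through étale maps — is routine and parallel to \cite{AA16,AA}.
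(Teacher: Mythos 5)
Your high-level plan is in the right direction — the entire work is in proving $c')\Rightarrow a)$, using the bounded-eccentricity hypothesis to compare $\mathcal{N}$-averages with ball-averages at the single point $\varphi(x)$ — and your reduction $b)\Rightarrow c')$ (or equivalently $c)\Rightarrow c')$) is fine. But the core step is genuinely incomplete, and your stated plan differs from the paper's actual argument in ways that matter.

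First, you oscillate between proving $c')\Rightarrow c)$ (continuous density) and proving $c')\Rightarrow a)$ by contradiction; in the end you settle on the latter, which is also what the paper does, so the $c')\Rightarrow c)$ detour should be dropped. Second, and more importantly, you treat ``not (FRS) at $x$'' as a single phenomenon detected by a log-discrepancy $\le 0$ over some divisor in a resolution, and assert that this forces ball-averages of the density to blow up. This is too coarse. The (FRS) condition has two independent failure modes that the paper treats by two different mechanisms: (i) the fiber $Z=\varphi^{-1}(\varphi(x))$ could fail to be reduced / of the right dimension (i.e.\ $\varphi$ not flat, $Z$ not LCI), and (ii) $Z$ could be reduced and LCI but fail to have rational singularities. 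Your log-discrepancy heuristic addresses only (ii), and only loosely. For (i) the paper argues by contradiction with an explicit box construction: if some irreducible component $Z_i\ni x$ misses the smooth locus of $\varphi$, one picks a nearby point $p$ in $\mathrm{supp}(m)\cap W_i^{sm}(F)$, puts $\varphi$ in normal form near $p$ by the implicit function theorem, and defines anisotropic polydiscs $A_\epsilon$ adapted to $\ker d\varphi$; a direct measure estimate $\mu(A_{D\epsilon_j})\gtrsim \epsilon_j^{2\dim Y-r}$ then gives $\frac{\psi_*\mu(M_j)}{\lambda(M_j)}\gtrsim \epsilon_j^{-r}\to\infty$, with the bounded-eccentricity hypothesis used precisely to convert this into a statement about the given basis $\mathcal{N}$. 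This is exactly the ``bad locus accumulates with positive relative density'' lemma you flagged as missing — it is not an abstract consequence of cell decomposition but requires the explicit $A_\epsilon$ construction, and it is the heart of this part of the proof.

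For (ii), the paper does not run a blow-up/contradiction argument at all. Once flatness, reducedness, and LCI are established, $Z$ is Gorenstein, and by \cite[Corollary 3.15]{AA16} rationality of the singularity at $x$ reduces to showing the finiteness of $\int_{X^S\cap Z(F)} f|\omega_Z|_F$. This finiteness is proved directly: approximating $f$ by $f\cdot h_\epsilon$ supported in the smooth locus, applying the pushforward density formula (Proposition~\ref{Prop 2.7}), using monotone convergence, and bounding the resulting limit by $\sup_i \frac{\varphi_*(f|\omega_X|_F)(N_i)}{|\omega_Y|_F(N_i)}<M$, which is exactly the $\mathcal{N}$-boundedness hypothesis. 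There is no need to analyze Igusa zeta functions or log-discrepancies, and correspondingly no need to worry about whether the ``blow-up'' has a definite sign or accumulates along a thin set. So the missing ingredients in your plan are (a) the explicit geometric estimate handling non-flatness/non-reducedness, and (b) the switch, for the rationality half, from a blow-up contradiction to a direct finiteness argument via the Gorenstein criterion.
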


We then use Theorem \ref{Main auxilary theorem} and the fact that
the local basis $\{p^{n}\mathbb{Z}_{p}^{N}\}_{n}$ is of bounded eccentricity
to show that $v)$ implies condition $iii)$. 

The second difficulty is to show that if $h_{X}(\mathbb{Z}/p^{n}\mathbb{Z})$
is bounded for almost any prime $p$, then it is in fact bounded for
any $p$. We first prove this for the case that $X$ is a complete
intersection in an affine space, denoted $(CIA)$ (Proposition \ref{Proposition 4.5}).
We then deal with the case when $X_{\mathbb{Q}}$ is a $(CIA)$, by
constructing a finite type $\mathbb{Z}$-scheme $\widehat{X},$ which
is a $(CIA)$ and a morphism $\psi:X\longrightarrow\widehat{X}$,
such that $\psi_{\mathbb{Q}}:X_{\mathbb{Q}}\longrightarrow\widehat{X}_{\mathbb{Q}}$
is an isomorphism (Lemma \ref{Lemma 4.6}). We prove this case by
showing the existence of $c,N\in\mathbb{N}$ such that
\[
\left|X(\mathbb{Z}/p^{n}\mathbb{Z})\right|\leq p^{Nc}\cdot\left|\widehat{X}(\mathbb{Z}/p^{n}\mathbb{Z})\right|,
\]
(Lemma \ref{Lemma 4.7}). For the general case, we first cover $X_{\mathbb{Q}}$
by affine $\mathbb{Q}$-schemes $\{U_{i}\}$ such that $U_{i}$ is
a $(CIA)$, and then consider a collection of $\mathbb{Z}$-schemes
$\{\widetilde{U}_{i}\}$, such that $\widetilde{U}_{i}\simeq U_{i}$
over $\mathbb{Q}$. Finally, using the explicit construction of $\widetilde{U}_{i}$
we show that
\[
h_{X}(\mathbb{Z}/p^{n}\mathbb{Z})\leq\sum_{i}h_{\widetilde{U}_{i}}(\mathbb{Z}/p^{n}\mathbb{Z}),
\]
and since $\left(\widetilde{U}_{i}\right)_{\mathbb{Q}}\simeq U_{i}$
is a $(CIA)$, we are done by the last case. 
\begin{acknowledgement*}
I would like to thank my advisor \textbf{Avraham Aizenbud} for presenting
me with this problem, teaching and helping me in this work. I hold
many thanks to \textbf{Nir Avni }for helpful discussions and for hosting
me at the Northwestern university in July 2016, during which a large
part of this work was done. I also thank \textbf{Yotam Hendel} for
fruitful talks. This work was partially supported by the ISF grant
{[}687/13{]}, the BSF grant {[}2012247{]} and the Minerva foundation
grant. 
\end{acknowledgement*}

\section{\label{sec:Preliminaries}Preliminaries}

In this section, we recall some definitions and facts in algebraic
geometry and $F$-analytic manifolds, for a non-Archimedean local
field $F$. Most of the the statements presented here can be found
in \cite[Appendix B in the arxiv version]{AA16} and in \cite{AA}.

\subsection{Preliminaries in algebraic geometry}

Let $A$ be a commutative ring. A sequence $x_{1},...,x_{r}\in A$
is called a \textit{regular sequence} if $x_{i}$ is not a zero-divisor
in $A/(x_{1},...,x_{i-1})$ for each $i$, and we have a proper inclusion
$(x_{1},...,x_{r})\subsetneq A$. If $(A,\mathfrak{m})$ is a Noetherian
local ring then the $depth$ of $A$, denoted $\mathrm{depth}(A)$,
is defined to be the length of the longest regular sequence with elements
in $\mathfrak{m}$. It follows from Krull\textquoteright s principal
ideal theorem that $\mathrm{depth}(A)$ is smaller or equal to $\mathrm{dim}(A)$,
the Krull dimension of $A$. A Noetherian local ring $(A,\mathfrak{m})$
is \textit{Cohen-Macaulay} if $\mathrm{depth}(A)=\mathrm{dim}(A)$.
A locally Noetherian scheme $X$ is said to be \textit{Cohen-Macaulay}
if for any $x\in X$, the local ring $\mathcal{O}_{X,x}$ is Cohen-Macaulay. 

Let $X$ be an algebraic variety over a field $k$. We say that $X$
has a \textit{resolution of singularities}, if there exists a proper
morphism $p:\widetilde{X}\rightarrow X$ such that $\widetilde{X}$
is smooth and $p$ is a birational equivalence. A \textit{strong resolution
of singularities} of $X$ is a resolution of singularities $p:\widetilde{X}\rightarrow X$
which is an isomorphism over the smooth locus of $X$, denoted $X^{sm}$.
It is a theorem of Hironaka \cite{Hir64}, that any variety $X$ over
a field $k$ of characteristic zero admits a strong resolution of
singularities $p:\widetilde{X}\rightarrow X$. 

For the following definition, see \cite[I.3 pages 50-51]{KKMS73}
or \cite[Definition 6.1]{AA16}; we say that $X$ has \textit{rational
singularities} if for any (or equivalently, for some) resolution of
singularities $p:\widetilde{X}\longrightarrow X$, the natural morphism
$\mathcal{O}_{X}\rightarrow\mathrm{R}p_{*}(\mathcal{O}_{\widetilde{X}})$
is a quasi-isomorphism, where $\mathrm{R}p_{*}$ is the higher direct
image. A point $x\in X(k)$ is a \textit{rational singularity} if
there exists a Zariski open neighborhood $U\subseteq X$ of $x$ that
has rational singularities.

We denote by $\Omega_{X}^{r}$ the sheaf of differential $r$-forms
on $X$ and by $\Omega_{X}^{r}[X]$ (resp. $\Omega_{X}^{r}(X)$) the
regular (resp. rational)  $r$-forms. The following lemma gives a
local characterization of rational singularities:
\begin{lem}
(see e.g \cite[Proposition 6.2]{AA16}) \textup{An affine $k$-variety
$X$ has rational singularities if and only if $X$ is Cohen\textendash Macaulay,
normal, and for any, or equivalently, some strong resolution of singularities
$p:\widetilde{X}\rightarrow X$ and any top differential form $\omega\in\Omega_{X^{sm}}^{top}[X^{sm}]$,
there exists a top differential form $\widetilde{\omega}\in\Omega_{\widetilde{X}}^{top}[\widetilde{X}]$
such that $\omega=\widetilde{\omega}|_{X^{sm}}$.} 
\end{lem}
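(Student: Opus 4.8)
The plan is to prove the local characterization of rational singularities by combining three ingredients: the definition of rational singularities via the quasi-isomorphism $\mathcal{O}_X \to \mathrm{R}p_*\mathcal{O}_{\widetilde{X}}$, the duality theory for the dualizing complex, and a careful analysis of what regular top-forms on $\widetilde{X}$ pull back to on $X^{sm}$. Let me lay this out.

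First I would recall the standard reductions. Since $X$ is affine of dimension $d$, say, the natural map $\mathcal{O}_X \to \mathrm{R}p_*\mathcal{O}_{\widetilde{X}}$ being a quasi-isomorphism is equivalent to two conditions: (1) $p_*\mathcal{O}_{\widetilde{X}} = \mathcal{O}_X$, which (since $p$ is a birational proper morphism onto $X$ and $\widetilde{X}$ is normal) forces $X$ to be normal; and (2) $R^i p_*\mathcal{O}_{\widetilde{X}} = 0$ for all $i > 0$. So the content is to show that, given normality, the vanishing of the higher direct images is equivalent to $X$ being Cohen–Macaulay together with the top-form extension property. This is where Grothendieck–Serre duality enters.

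The key step is the following. Let $\omega_X^\bullet$ denote the dualizing complex of $X$, normalized so that $\omega_X := H^{-d}(\omega_X^\bullet)$ is the canonical (Grothendieck) dualizing sheaf; since $X$ is normal it agrees with $j_*\Omega_{X^{sm}}^{top}$ where $j : X^{sm} \hookrightarrow X$. By Grothendieck duality for the proper morphism $p$, one has $\mathrm{R}p_* \mathcal{O}_{\widetilde{X}} \cong \mathrm{R}\mathcal{H}om(\mathrm{R}p_*\omega_{\widetilde{X}}, \omega_X^\bullet)$ in an appropriate derived sense (using $\mathrm{R}p_!\, \omega_X^\bullet = \omega_{\widetilde{X}}^\bullet$, and $\omega_{\widetilde{X}} = \Omega_{\widetilde{X}}^{top}$ since $\widetilde{X}$ is smooth). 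By Grauert–Riemenschneider vanishing, $R^i p_* \Omega_{\widetilde{X}}^{top} = 0$ for $i>0$ in characteristic zero, so $\mathrm{R}p_* \omega_{\widetilde{X}}$ is just the sheaf $p_*\Omega_{\widetilde{X}}^{top}$ sitting in degree zero. Dualizing, one finds that $R^i p_*\mathcal{O}_{\widetilde{X}} = 0$ for $i > 0$ and $X$ is Cohen–Macaulay if and only if the natural inclusion $p_*\Omega_{\widetilde{X}}^{top} \hookrightarrow \omega_X$ is an isomorphism. Now $p_*\Omega_{\widetilde{X}}^{top}$ is precisely the subsheaf of $\omega_X = j_*\Omega_{X^{sm}}^{top}$ consisting of those regular top-forms on $X^{sm}$ that extend to a regular top-form on $\widetilde{X}$ (one checks this on the smooth locus, where $p$ is an isomorphism, and uses that $\widetilde{X}$ is smooth so every section of $\Omega_{\widetilde{X}}^{top}$ over a dense open extends iff it is regular there — this last point uses normality of $\widetilde X$ and that the complement of $p^{-1}(X^{sm})$ has codimension $\geq 1$ but the extension is automatic since forms on a smooth variety satisfy Hartogs). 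Hence that inclusion is an isomorphism exactly when every $\omega \in \Omega_{X^{sm}}^{top}[X^{sm}]$ extends to $\widetilde{\omega} \in \Omega_{\widetilde{X}}^{top}[\widetilde{X}]$, which is the stated condition.

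The main obstacle is bookkeeping the duality isomorphism correctly — in particular, being careful that ``$\mathrm{R}p_*\mathcal{O}_{\widetilde X}$ is concentrated in degree $0$ and equals $\mathcal{O}_X$'' (the rational singularities condition) decouples, after applying $\mathrm{R}\mathcal{H}om(-,\omega_X^\bullet)$, into the Cohen–Macaulay statement ($\omega_X^\bullet$ concentrated in one degree) and the surjectivity of $p_*\Omega^{top}_{\widetilde X} \to \omega_X$. One must also verify the equivalence is independent of the chosen strong resolution, which follows from the fact that any two resolutions are dominated by a third together with the birational invariance of $p_*\Omega^{top}_{\widetilde X}$ for smooth $\widetilde X$ (regular top-forms pull back and the pushforward does not change under a further blow-up, again by Hartogs on smooth varieties). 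The normality and Cohen–Macaulayness are then extracted as above; I would cite \cite[I.3, pages 50--51]{KKMS73} and \cite[Proposition 6.2]{AA16} for the parts where the duality computation is carried out in detail rather than reproduce it.
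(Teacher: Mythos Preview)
The paper does not actually prove this lemma: it is stated with the parenthetical reference ``(see e.g.\ \cite[Proposition 6.2]{AA16})'' and no proof is given, so there is nothing to compare your argument against line by line. Your sketch is the standard argument for this classical fact (normality from $p_*\mathcal{O}_{\widetilde X}=\mathcal{O}_X$, then Grauert--Riemenschneider to reduce $\mathrm{R}p_*\omega_{\widetilde X}$ to degree zero, then Grothendieck duality to translate the vanishing of $R^ip_*\mathcal{O}_{\widetilde X}$ into Cohen--Macaulayness plus the isomorphism $p_*\Omega_{\widetilde X}^{top}\simeq\omega_X$), and it is correct in outline; this is essentially what \cite[Proposition 6.2]{AA16} and \cite[I.3]{KKMS73} do, so citing them as you propose is appropriate.

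One small wording issue: when you say the complement of $p^{-1}(X^{sm})$ has codimension $\geq 1$ and invoke Hartogs, note that Hartogs-type extension for regular sections of a locally free sheaf on a smooth variety needs codimension $\geq 2$. What you actually need there is simpler: $p^{-1}(X^{sm})$ is a dense open in the smooth (hence integral) variety $\widetilde X$, so restriction of sections of the locally free sheaf $\Omega_{\widetilde X}^{top}$ is injective, giving the embedding $p_*\Omega_{\widetilde X}^{top}\hookrightarrow j_*\Omega_{X^{sm}}^{top}$; no Hartogs is required for that direction. The independence-of-resolution remark is fine and is indeed how one passes between ``any'' and ``some'' strong resolution.
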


Let $X$ be a finite type scheme over a ring $R$. Then $X$ is called:
\begin{enumerate}
\item A \textit{complete intersection $(CI)$} if there exists an affine
scheme $Y$, a smooth morphism $Y\rightarrow\mathrm{Spec}R$, a closed
embedding $X\hookrightarrow Y$ over $\mathrm{Spec}R$, and a regular
sequence $f_{1},...,f_{r}\in\mathcal{O}_{Y}(Y)$, such that the ideal
of $X$ in $Y$ is generated by the $\{f_{i}\}$. 
\item A\textit{ local complete intersection $(LCI)$} if there is an open
cover $\{U_{i}\}$ of $X$ such that each $U_{i}$ is a $(CI)$. 
\item A\textit{ complete intersection in an affine space $(CIA)$} if $X$
is a complete intersection in $Y$, with $Y=\mathbb{A}_{R}^{n}$ an
affine space. 
\item A\textit{ local complete intersection in an affine space $(LCIA)$}
if there is an open affine cover $\{U_{i}\}$ of $X$ such that each
$U_{i}$ is a $(CIA)$.
\end{enumerate}
\begin{rem}
\label{rem:For-an-affine} For an affine $k$-variety, the notion
of $(CIA)$ is not equivalent to $(CI)$ (e.g consider $X$ to be
any affine smooth $k$-variety which is not a $(CIA)$). On the other
hand, the notion of $(LCI)$ is equivalent to $(LCIA)$ for finite
type $k$-schemes. We will therefore use the notation $(LCI)$ for
both notions. 
\end{rem}

The following Proposition is a consequence of the above remark and
the Miracle Flatness Theorem (e.g \cite[Theorem 26.2.11]{Vak}) 
\begin{prop}
\label{Prop 2.3} Let $X$ be $k$-variety. If $X$ is an $(LCI)$
then there exists an open affine cover $\{U_{i}\}$ of $X$ and morphisms
$\varphi_{i},\psi_{i}$, where $\varphi_{i}:\mathbb{A}_{k}^{m_{i}}\longrightarrow\mathbb{A}_{k}^{n_{i}}$
is flat above $0$, and $\psi_{i}:U_{i}\hookrightarrow\mathbb{A}_{k}^{m_{i}}$
is a closed embedding that induces a $k$-isomorphism $\psi_{i}:U_{i}\simeq\varphi_{i}^{-1}(0)$.
\end{prop}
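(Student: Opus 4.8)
The plan is to prove Proposition \ref{Prop 2.3} by first reducing to the affine case, then using the $(LCIA)$ reformulation from Remark \ref{rem:For-an-affine}, and finally extracting the flat morphism via the Miracle Flatness Theorem. Since $X$ is an $(LCI)$, by Remark \ref{rem:For-an-affine} it is also an $(LCIA)$, so there is an open affine cover $\{U_i\}$ of $X$ with each $U_i$ a $(CIA)$. Thus it suffices to work with a single affine $k$-variety $U$ which is a complete intersection in $\mathbb{A}_k^m$: there is a closed embedding $\psi: U \hookrightarrow \mathbb{A}_k^m$ and a regular sequence $f_1,\dots,f_n \in k[x_1,\dots,x_m]$ generating the ideal $I(U)$. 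After a translation we may assume the point of interest lies at the origin, so that $0 \in \mathbb{A}_k^m$ corresponds to a point of $U$; actually, since the conclusion only asks for $\psi_i : U_i \simeq \varphi_i^{-1}(0)$, we just need the fiber over $0$ to be $U$ as a scheme, which we arrange by translating so that the chosen generators vanish on $U$ and setting things up with the constant term absorbed.

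Next I would define $\varphi = (f_1,\dots,f_n) : \mathbb{A}_k^m \to \mathbb{A}_k^n$. By construction $\varphi^{-1}(0)$ is the closed subscheme of $\mathbb{A}_k^m$ cut out by $f_1,\dots,f_n$, which is exactly $U$ (with its given scheme structure), so the closed embedding $\psi$ identifies $U$ with $\varphi^{-1}(0)$ as $k$-schemes. The remaining point is flatness of $\varphi$ above $0$, i.e. flatness of $\varphi$ at every point of the fiber $\varphi^{-1}(0)$. Here is where the Miracle Flatness Theorem enters: for a local homomorphism $\mathcal{O}_{\mathbb{A}^n_k, 0} \to \mathcal{O}_{\mathbb{A}^m_k, y}$ with source and target Noetherian local rings, if the source is regular, the target is Cohen--Macaulay, and the fiber dimension equals $\dim \mathcal{O}_{\mathbb{A}^m,y} - \dim \mathcal{O}_{\mathbb{A}^n,0}$, then the map is flat. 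The source $\mathcal{O}_{\mathbb{A}^n_k,0}$ is regular; the target $\mathcal{O}_{\mathbb{A}^m_k,y}$ is regular hence Cohen--Macaulay (since $\mathbb{A}^m_k$ is smooth); and the dimension count follows because $f_1,\dots,f_n$ form a regular sequence in $k[x_1,\dots,x_m]$, so locally at any $y$ in their common zero locus the fiber has codimension exactly $n$, i.e. dimension $m - n$, matching $\dim \mathcal{O}_{\mathbb{A}^m,y} - \dim \mathcal{O}_{\mathbb{A}^n,0} = m - n$. Hence $\varphi$ is flat at every point of $\varphi^{-1}(0)$, which is precisely flatness above $0$.

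Assembling this over the cover: setting $m_i, n_i$ and $\varphi_i, \psi_i$ as above for each $U_i$ gives the desired conclusion. The main subtlety — not really an obstacle, but the point that needs care — is the bookkeeping that the scheme-theoretic fiber $\varphi_i^{-1}(0)$ really coincides with $U_i$ with its original (reduced, or possibly non-reduced — irrelevant here) structure, i.e. that the $(CIA)$ data genuinely presents $U_i$ as $V(f_1,\dots,f_{n_i})$ and not merely up to nilpotents or up to restricting to a smaller open set; this is exactly what the definition of $(CIA)$ with $Y = \mathbb{A}^n_R$ guarantees, so it goes through cleanly. A secondary point is the translation normalizing the fiber to sit over $0$ rather than over some other $k$-point of $\mathbb{A}^{n_i}_k$, but composing $\varphi_i$ with a translation of $\mathbb{A}^{n_i}_k$ changes neither flatness nor the isomorphism type of the fiber, so this is harmless.
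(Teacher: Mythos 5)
Your proof is correct and matches the paper's intent, which simply cites Remark \ref{rem:For-an-affine} together with the Miracle Flatness Theorem. One small remark: the translation step is unnecessary noise, since the $f_i$ generate the ideal of $U_i$ and therefore vanish identically on $U_i$, so the scheme-theoretic fiber of $\varphi_i=(f_1,\dots,f_{n_i})$ over $0$ is automatically $U_i$ without any normalization.
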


Using Proposition \ref{Prop 2.3} and \cite[Theorem 11.3.10]{Gro66},
one can obtain:
\begin{prop}
\label{prop 2.4} Let $X$ be a finite type $\mathbb{Z}$-scheme.
If $X$ is a $(CIA)$ then there exists $\mathbb{Z}$-morphisms $\varphi,\psi$,
where $\varphi:\mathbb{A}_{\mathbb{Z}}^{m}\longrightarrow\mathbb{A}_{\mathbb{Z}}^{n}$
is flat above $0$, and $\psi:X\hookrightarrow\mathbb{A}_{\mathbb{Z}}^{m}$
is an inclusion that induces a $\mathbb{Z}$-isomorphism $\psi:X\simeq\varphi^{-1}(0)$. 
\end{prop}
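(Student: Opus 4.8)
The plan is to bootstrap Proposition \ref{prop 2.4} from its characteristic-zero analogue Proposition \ref{Prop 2.3} via a standard spreading-out argument. So first I would apply Proposition \ref{Prop 2.3} to the $\mathbb{Q}$-variety $X_{\mathbb{Q}}$. Since $X$ is already a $(CIA)$ over $\mathbb{Z}$, $X$ is affine, hence $X_{\mathbb{Q}}$ is an affine $\mathbb{Q}$-variety which is an $(LCI)$ — indeed it is a $(CIA)$, and in particular $X_{\mathbb{Q}}$ can be covered by a single affine chart; the content I actually want is a flat-above-$0$ morphism $\varphi_{\mathbb{Q}}:\mathbb{A}_{\mathbb{Q}}^{m}\to\mathbb{A}_{\mathbb{Q}}^{n}$ together with a closed immersion $\psi_{\mathbb{Q}}:X_{\mathbb{Q}}\hookrightarrow\mathbb{A}_{\mathbb{Q}}^{m}$ inducing an isomorphism $X_{\mathbb{Q}}\simeq\varphi_{\mathbb{Q}}^{-1}(0)$. (Concretely, if $X=\mathrm{Spec}\,\mathbb{Z}[x_{1},\dots,x_{m}]/(f_{1},\dots,f_{r})$ with the $f_{i}$ forming a regular sequence generically, then $\varphi_{\mathbb{Q}}=(f_{1},\dots,f_{r})$ already works with $n=r$, after possibly discarding redundant equations so that flatness at $0$ holds; here is where the miracle-flatness input behind Proposition \ref{Prop 2.3} is used.)

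Next, I would \emph{spread out}: the morphism $\varphi_{\mathbb{Q}}$ and the immersion $\psi_{\mathbb{Q}}$ are defined by finitely many polynomials with $\mathbb{Q}$-coefficients, and all the conditions I need — that $\psi$ is a closed immersion, that $\varphi$ is flat in a neighborhood of the zero fiber, that $\psi$ identifies $X$ with $\varphi^{-1}(0)$ — are conditions over $\mathrm{Spec}\,\mathbb{Z}[1/M]$ for some integer $M$. This is exactly the setting of \cite[Theorem 11.3.10]{Gro66} (constructibility/openness of the flat locus) together with the standard finite-presentation descent results that guarantee closed immersions and isomorphisms descend to $\mathbb{Z}[1/M]$. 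So after inverting finitely many primes I obtain $\mathbb{Z}[1/M]$-morphisms $\varphi^{\prime}:\mathbb{A}_{\mathbb{Z}[1/M]}^{m}\to\mathbb{A}_{\mathbb{Z}[1/M]}^{n}$ flat above $0$ and $\psi^{\prime}:X_{\mathbb{Z}[1/M]}\hookrightarrow\mathbb{A}_{\mathbb{Z}[1/M]}^{m}$ with $\psi^{\prime}:X_{\mathbb{Z}[1/M]}\simeq(\varphi^{\prime})^{-1}(0)$.

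The remaining issue — and the one real obstacle — is that inverting primes is not allowed: the statement demands an honest $\mathbb{Z}$-scheme morphism. To remove the denominators I would simply clear them: choose polynomial lifts of the defining equations of $\varphi^{\prime}$ and $\psi^{\prime}$ to $\mathbb{Z}[x_{1},\dots,x_{m}]$ (multiplying through by a power of $M$ to kill denominators), giving candidate $\mathbb{Z}$-morphisms $\varphi,\psi$. Over $\mathbb{Z}[1/M]$ these agree with $\varphi^{\prime},\psi^{\prime}$, so all the required properties hold after inverting $M$; but scaling the equations by units of $\mathbb{Q}$ changes nothing scheme-theoretically over $\mathbb{Q}$, so the induced map $\psi_{\mathbb{Q}}:X_{\mathbb{Q}}\simeq\varphi_{\mathbb{Q}}^{-1}(0)$ is still an isomorphism. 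The delicate point is flatness of $\varphi$ above $0$ over \emph{all} of $\mathbb{Z}$, including the bad primes, since flatness above $0$ of a morphism of affine spaces is equivalent (again by miracle flatness, the ideal being generated by the pulled-back coordinates and the source/target being regular) to the zero-fiber having the expected codimension $n$ in every fiber. I would handle this by adjusting the choice of $\varphi$: rather than naively clearing denominators, note that the defining equations of $\varphi_{\mathbb{Q}}$ can be taken in $\mathbb{Z}[x_{1},\dots,x_{m}]$ from the start (the $f_{i}$ defining $X$ over $\mathbb{Z}$), and that "the codimension of $\varphi^{-1}(0)$ in $\mathbb{A}^{m}_{\mathbb{F}_p}$ equals $n$ for every $p$" can be arranged, if necessary, by restricting attention to a Zariski-open $U\subseteq\mathbb{A}_{\mathbb{Z}}^{m}$ around the zero section and re-embedding; alternatively one invokes that $X$ being a $(CIA)$ over $\mathbb{Z}$ already provides a regular sequence over $\mathbb{Z}$, so $\varphi=(f_{1},\dots,f_{r})$ is flat above $0$ over $\mathbb{Z}$ by the very definition of complete intersection plus miracle flatness, and the only role of Proposition \ref{Prop 2.3}/\cite[Theorem 11.3.10]{Gro66} is to ensure $n=r$ can be taken equal to $m-\dim$ after discarding redundancies — which is a generic (hence open) condition, handled by shrinking and re-embedding as above. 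I expect the bookkeeping around these shrink-and-re-embed steps, ensuring the final objects are genuinely $\mathbb{A}_{\mathbb{Z}}^{m}$ and $\mathbb{A}_{\mathbb{Z}}^{n}$ rather than open subschemes, to be the fiddliest part of the write-up.
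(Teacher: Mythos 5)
Your overall strategy — apply Proposition \ref{Prop 2.3} over $\mathbb{Q}$, spread out to $\mathbb{Z}[1/M]$, then clear denominators — is more complicated than necessary, and in fact it is the detour through $\mathbb{Q}$ that creates the obstacle you then struggle to remove. The hypothesis is that $X$ is a $(CIA)$ \emph{over $\mathbb{Z}$}, not merely that $X_{\mathbb{Q}}$ is. By definition this already hands you a closed embedding $\psi\colon X\hookrightarrow\mathbb{A}_{\mathbb{Z}}^{m}$ together with a regular sequence $f_{1},\dots,f_{r}\in\mathbb{Z}[x_{1},\dots,x_{m}]$ generating the ideal of $X$. Setting $\varphi:=(f_{1},\dots,f_{r})\colon\mathbb{A}_{\mathbb{Z}}^{m}\to\mathbb{A}_{\mathbb{Z}}^{r}$, you have $\varphi^{-1}(0)=X$ with integer coefficients from the start, and no spreading-out, no bad set $S$, no denominator-clearing. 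You do gesture at this in your fallback (b), but it should be the \emph{main} line of the argument rather than an afterthought, and once you take it seriously the shrink-and-re-embed bookkeeping that you flag as ``the fiddliest part'' disappears entirely.

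Where your proposal genuinely goes wrong is in the flatness step. You write that $\varphi=(f_{1},\dots,f_{r})$ ``is flat above $0$ over $\mathbb{Z}$ by the very definition of complete intersection plus miracle flatness.'' That is not a one-step deduction, and it is exactly for this point that the paper cites \cite[Theorem 11.3.10]{Gro66}. Miracle flatness over $\mathbb{Z}$ does not directly apply to $\varphi\colon\mathbb{A}_{\mathbb{Z}}^{m}\to\mathbb{A}_{\mathbb{Z}}^{r}$ from the regular-sequence hypothesis alone; what one needs is the fiberwise criterion for flatness: since both source and target are flat over $\operatorname{Spec}\mathbb{Z}$, the map $\varphi$ is flat at a point $x\in\varphi^{-1}(0)$ if and only if the fiber map $\varphi_{\kappa(s)}\colon\mathbb{A}_{\kappa(s)}^{m}\to\mathbb{A}_{\kappa(s)}^{r}$ (with $s$ the image of $x$ in $\operatorname{Spec}\mathbb{Z}$) is flat at $x$. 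Only \emph{then} does one invoke miracle flatness, inside each fiber $\mathbb{A}_{\kappa(s)}^{m}\to\mathbb{A}_{\kappa(s)}^{r}$, where it requires the local dimension of $X_{\kappa(s)}$ at $x$ to equal $m-r$. Your write-up never reduces to fibers and never verifies the fiber-dimension equality; the phrase ``the codimension of $\varphi^{-1}(0)$ in $\mathbb{A}_{\mathbb{F}_{p}}^{m}$ equals $n$ for every $p$ can be arranged, if necessary, by restricting attention to a Zariski-open $U\subseteq\mathbb{A}_{\mathbb{Z}}^{m}$'' is not a proof, and an open $U$ is not an affine space, so the ``re-embed'' you propose is not licensed by the statement you are trying to prove. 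In short: drop the passage through $\mathbb{Q}$, take $\varphi=(f_{1},\dots,f_{r})$ directly from the $\mathbb{Z}$-$(CIA)$ datum, and make the flatness argument fiber-by-fiber via \cite[Theorem 11.3.10]{Gro66} followed by miracle flatness in each fiber; that is what the paper's one-line proof is pointing at, and it is where the real (and not entirely trivial) content of the proposition lies.
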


A commutative Noetherian ring $A$ is called \textit{Gorenstein} if
it has finite injective dimension as an $A$-module. A locally Noetherian
scheme $X$ is said to be \textit{Gorenstein} if all its local rings
are Gorenstein. Any locally Noetherian scheme $X$ which is a local
complete intersection is also Gorenstein. 

\subsection{Some facts on $F$-analytic manifolds}

Let $X$ be a $d$-dimensional smooth algebraic $k$-variety and $F\supseteq k$
be a non-Archimedean local field, with ring of integers $\mathcal{O}_{F}$.
Then $X(F)$ has a structure of an $F$-analytic manifold. Given $\omega\in\Omega_{X}^{top}(X)$,
we can define a measure $\left|\omega\right|_{F}$ on $X(F)$ as follows.
For a compact open set $U\subseteq X(F)$ and an $F$-analytic diffeomorphism
$\phi$ between an open subset $W\subseteq F^{d}$ and $U$, we can
write $\phi^{*}\omega=g\cdot dx_{1}\wedge...\wedge dx_{n}$, for some
$g:W\rightarrow F$, and define 
\[
\left|\omega\right|_{F}(U)=\int_{W}\left|g\right|_{F}d\lambda,
\]
where $\left|\,\right|_{F}$ is the normalized absolute value on $F$
and $\lambda$ is the normalized Haar measure on $F^{d}$. Note that
this definition is independent of the diffeomorphism $\phi$, and
that this uniquely defines a measure on $X(F)$. 
\begin{defn}
~\label{def:Schwartz measure}
\begin{enumerate}
\item A measure $m$ on $X(F)$ is called $smooth$ if every point $x\in X(F)$
has an analytic neighborhood $U$ and an $F$-analytic diffeomorphism
$f:U\rightarrow\mathcal{O}_{F}^{d}$ such that $f_{*}m$ is some Haar
measure on $\mathcal{O}_{F}^{d}$ . 
\item A measure on $X(F)$ is called $Schwartz$ if it is smooth and compactly
supported. 
\item We say that a measure $\mu$ on $X(F)$ has $continuous\,density$,
if there is a smooth measure $m$ and a continuous function $f:X(F)\rightarrow\mathbb{C}$
such that $\mu=f\cdot m$. 
\end{enumerate}
\end{defn}

The following proposition characterizes Schwartz measures and measures
with continuous density:
\begin{prop}
\cite[Proposition 3.3]{AA16} \label{Prop 2.6} Let $X$ be a smooth
variety over a non-Archimedean local field $F$. 

1) A measure $m$ on $X(F)$ is Schwartz if and only if it is a linear
combination of measures of the form $f\left|\omega\right|_{F}$, where
$f$ is a Schwartz function (i.e locally constant and compactly supported)
on $X(F)$, and $\omega\in\Omega_{X}^{top}(X)$ has no zeros or poles
in the support of $f$.

2) A measure $\mu$ on $X(F)$ has continuous density if and only
if for every point $x\in X(F)$ there is an analytic neighborhood
$U$ of $x$, a continuous function $f:U\rightarrow\mathbb{C}$, and
$\omega\in\Omega_{X}^{top}(X)$ with no poles in $U$ such that $\mu=f\left|\omega\right|_{F}$. 
\end{prop}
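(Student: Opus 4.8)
The statement is a characterization result, so the plan is to verify the two equivalences essentially by unwinding the definitions, using the local structure of smooth $F$-analytic manifolds together with the fact that top differential forms trivialize the canonical bundle on small enough charts.

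For part (1), the ``if'' direction is almost immediate: a Schwartz function $f$ on $X(F)$ is locally constant with compact support, and if $\omega \in \Omega_X^{top}(X)$ has no zeros or poles on $\mathrm{supp}(f)$, then on a chart $\phi: W \xrightarrow{\sim} U$ with $U$ compact open we have $\phi^*\omega = g\, dx_1 \wedge \cdots \wedge dx_d$ with $g$ a nonvanishing analytic function on the relevant compact set, hence $|g|_F$ bounded away from $0$ and $\infty$; pushing forward, $f|\omega|_F$ becomes a locally constant compactly supported density against Haar measure, and after a further analytic change of coordinates absorbing $|g|_F$ one sees it is smooth in the sense of Definition \ref{def:Schwartz measure}(1). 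Taking finite linear combinations preserves this, so such measures are Schwartz. For the ``only if'' direction, I would start from a Schwartz measure $m$, cover its (compact) support by finitely many charts $f_i: U_i \to \mathcal{O}_F^d$ on which $f_{i,*}m$ is a Haar measure, use a locally constant partition of unity subordinate to this cover (available since $X(F)$ is totally disconnected), and on each piece note that the Haar measure on $\mathcal{O}_F^d$ is $|dx_1\wedge\cdots\wedge dx_d|_F$ up to a constant; pulling back along $f_i$ expresses that piece as $(\text{const})\cdot |\eta_i|_F$ for the analytic top-form $\eta_i = f_i^*(dx)$, which is nonvanishing on the chart. The only subtlety is that $\eta_i$ is an \emph{analytic} top-form on $U_i$, not a priori the restriction of an \emph{algebraic} form in $\Omega_X^{top}(X)$; to fix this I would, after shrinking $U_i$ to sit inside a Zariski-open $V_i$ on which the algebraic canonical bundle is trivialized by some $\omega_i \in \Omega_X^{top}(V_i) \subseteq \Omega_X^{top}(X)$ with no zeros or poles on $V_i$, write $\eta_i = u_i \cdot \omega_i|_{U_i}$ with $u_i$ a nonvanishing analytic function, and then replace the (constant)$\cdot |\eta_i|_F$ density by $(\text{const})\cdot |u_i|_F \cdot |\omega_i|_F$, where $|u_i|_F$ is now a Schwartz function (locally constant with compact support) times the indicator of $U_i$. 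Summing over $i$ gives the desired linear combination.

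For part (2), the ``if'' direction: given such local data $(U, f, \omega)$ with $f$ continuous and $\omega$ without poles on $U$, the measure $f|\omega|_F$ is the continuous function $f$ times the smooth measure $|\omega|_F$ — here I must check $|\omega|_F$ is genuinely smooth on $U$, which holds wherever $\omega$ also has no zeros, and more generally one writes $|\omega|_F = |g|_F \cdot(\text{Haar})$ on a chart, absorbs $|g|_F$ into the continuous function if $g$ can vanish, i.e. replace $f$ by $f\cdot|g|_F$ and $|\omega|_F$ by the Haar measure. A mild point of care is that continuous density is a local-to-global property: one needs to glue the local representations into a single global smooth measure $m$ and continuous $f$, which one does with a continuous partition of unity. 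The ``only if'' direction unwinds the definition $\mu = f \cdot m$ with $m$ smooth and $f$ continuous: near a point $x$, $m$ is analytically isomorphic to Haar measure on $\mathcal{O}_F^d$, which as above equals $|dx_1\wedge\cdots\wedge dx_d|_F$ up to a constant, hence equals $|\eta|_F$ for a nonvanishing analytic top-form $\eta$ on a neighborhood of $x$; writing $\eta = u\cdot\omega$ for an algebraic $\omega \in \Omega_X^{top}(X)$ without poles near $x$ and $u$ a nonvanishing analytic function, we get $\mu = (f\cdot|u|_F)\cdot|\omega|_F$ with $f\cdot|u|_F$ continuous near $x$, as required.

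The main obstacle — and really the only nonformal ingredient — is the passage from analytic to algebraic differential forms on a chart: one needs that every point of $X(F)$ has an analytic chart contained in a Zariski-open subset of $X$ over which $\Omega_X^{top}$ is free, and that the analytic diffeomorphism used to define ``smooth'' can be compared to this algebraic trivialization by a nonvanishing analytic unit. This is where smoothness of $X$ and the finite type hypothesis enter; everything else is partition-of-unity bookkeeping over the totally disconnected space $X(F)$. Since this proposition is quoted from \cite[Proposition 3.3]{AA16}, I would in practice present only the short argument above and refer there for details.
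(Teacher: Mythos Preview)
The paper does not prove this proposition at all: it is stated in the preliminaries section as a citation of \cite[Proposition 3.3]{AA16} and no argument is given. Your sketch is a reasonable outline of how such a proof would go, and you yourself note at the end that in practice one would simply refer to \cite{AA16}; that is exactly what the paper does, so there is nothing to compare.
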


\begin{prop}
\cite[Proposition 3.5]{AA16} \label{Prop 2.7} Let $\varphi:X\rightarrow Y$
be a smooth map between smooth varieties defined over a non-Archimedean
local field $F$. 

(1) If $m$ is a Schwartz measure on $X(F)$, then $\varphi_{*}m$
is a Schwartz measure on $Y(F)$.

(2) Assume that $\omega_{X}\in\Omega_{X}^{top}[X]$ and $\omega_{Y}\in\Omega_{Y}^{top}[Y]$,
where $\omega_{Y}$ is nowhere vanishing, and that $f$ is a Schwartz
function on $X(F)$. Then the measure $\varphi_{*}(f\left|\omega_{X}\right|_{F})$
is absolutely continuous with respect to $\left|\omega_{Y}\right|_{F}$,
and its density at a point $y\in Y(F)$ is $\int_{\varphi^{-1}(y)(F)}f\cdot\left|\frac{\omega_{X}}{\varphi^{*}\omega_{Y}}|_{\varphi^{-1}(y)}\right|_{F}$.
\end{prop}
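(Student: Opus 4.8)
The plan is to reduce both statements to a single local computation, performed by Fubini's theorem in suitable analytic coordinates, and then to patch via a finite decomposition of the (compact) support. The only non-formal ingredient is the local structure of a smooth morphism on $F$-points. Recall that since $\varphi\colon X\to Y$ is smooth, for every $x\in X(F)$ the induced map of $F$-analytic manifolds is a submersion at $x$; by the inverse/implicit function theorem over the complete field $F$ one may choose analytic coordinates $(x_{1},\dots,x_{d_{X}})$ on a polydisc neighborhood $W_{x}\cong\mathcal{O}_{F}^{d_{X}}$ of $x$ and coordinates $(y_{1},\dots,y_{d_{Y}})$ near $\varphi(x)$ such that $\varphi|_{W_{x}}$ becomes the coordinate projection $(x_{1},\dots,x_{d_{X}})\mapsto(x_{1},\dots,x_{d_{Y}})$, with $y_{i}=x_{i}$ for $i\le d_{Y}$. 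In particular $\varphi(W_{x})$ is open and the fibers of $\varphi$ inside $W_{x}$ are the polydiscs $\{x_{i}=c_{i}:i\le d_{Y}\}$.

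For part (1), first observe that a smooth measure has, in any analytic coordinate chart, density with respect to $\left|dx_{1}\wedge\dots\wedge dx_{d_{X}}\right|_{F}$ equal to the normalized absolute value of the Jacobian of an analytic diffeomorphism, hence a unit, hence a \emph{locally constant} positive function. Shrinking $W_{x}$ if necessary, we may thus assume $m|_{W_{x}}=c_{x}\cdot\mathbf{1}_{W_{x}}\left|dx_{1}\wedge\dots\wedge dx_{d_{X}}\right|_{F}$ for a constant $c_{x}>0$. Cover the compact set $\mathrm{supp}(m)$ by finitely many such charts, partition $\mathrm{supp}(m)$ into finitely many compact open sets $V_{j}$ each contained in one chart, and write $m=\sum_{j}\mathbf{1}_{V_{j}}m$. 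It suffices to push forward each $\mathbf{1}_{V_{j}}m$, which in its chart is a constant multiple of $\mathbf{1}_{B}\left|dx_{1}\wedge\dots\wedge dx_{d_{X}}\right|_{F}$ for a sub-polydisc $B$; by Fubini its pushforward is a constant multiple of $\mathbf{1}_{\varphi(B)}\left|dy_{1}\wedge\dots\wedge dy_{d_{Y}}\right|_{F}$, which is Schwartz on $Y(F)$ by Proposition \ref{Prop 2.6}(1). Summing proves that $\varphi_{*}m$ is Schwartz.

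For part (2), again localize: decompose $\mathrm{supp}(f)$ into finitely many compact open sets inside charts $W_{x}$ as above and reduce to $f$ supported in one chart. There write $\omega_{X}=g\,dx_{1}\wedge\dots\wedge dx_{d_{X}}$ and $\varphi^{*}\omega_{Y}=h\,dx_{1}\wedge\dots\wedge dx_{d_{Y}}$; since $\omega_{Y}$ is nowhere vanishing and $y_{i}=x_{i}$ for $i\le d_{Y}$, the function $h$ is a unit depending only on $x_{1},\dots,x_{d_{Y}}$, hence constant on each fiber, and $\left|\omega_{Y}\right|_{F}=\left|h\right|_{F}\left|dy_{1}\wedge\dots\wedge dy_{d_{Y}}\right|_{F}$, while $\frac{\omega_{X}}{\varphi^{*}\omega_{Y}}\big|_{\varphi^{-1}(y)}=\frac{g}{h}\,dx_{d_{Y}+1}\wedge\dots\wedge dx_{d_{X}}$. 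For a Borel set $S\subseteq Y(F)$, Fubini's theorem for the product decomposition $\lambda=\lambda_{Y}\otimes\lambda_{\mathrm{fib}}$ of the Haar measure on $F^{d_{X}}$ gives
\[
\varphi_{*}(f\left|\omega_{X}\right|_{F})(S)=\int_{\varphi^{-1}(S)}f\left|g\right|_{F}\,d\lambda=\int_{S}\left(\int_{\varphi^{-1}(y)}f\left|g/h\right|_{F}\,d\lambda_{\mathrm{fib}}\right)\left|h\right|_{F}\,d\lambda_{Y}(y),
\]
which is exactly $\int_{S}\left(\int_{\varphi^{-1}(y)(F)}f\left|\frac{\omega_{X}}{\varphi^{*}\omega_{Y}}|_{\varphi^{-1}(y)}\right|_{F}\right)\left|\omega_{Y}\right|_{F}(y)$. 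Thus $\varphi_{*}(f\left|\omega_{X}\right|_{F})$ is absolutely continuous with respect to $\left|\omega_{Y}\right|_{F}$ with the asserted density; summing over the finitely many pieces completes the proof.

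The main obstacle is purely the local normal form for submersions of $F$-analytic manifolds invoked at the outset; granting it, the rest is bookkeeping with Fubini and with finite decompositions of compact open sets, the latter being harmless since $F$ is totally disconnected. One should only note that $g$ may vanish identically on some fibers, so in (2) the fiber integral can be $0$ and the resulting density is merely continuous rather than a unit times $\left|\omega_{Y}\right|_{F}$ — which is exactly what the statement (and Definition \ref{def:Schwartz measure}) allows.
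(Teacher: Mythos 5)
The paper does not prove this proposition; it is cited verbatim from \cite[Proposition 3.5]{AA16} and used as a black box, so there is no in-paper argument to compare against. Your proof is nevertheless correct and is essentially the standard argument (and, to my knowledge, the same route taken in \cite{AA16}): put $\varphi$ in local normal form as a coordinate projection via the $F$-analytic implicit function theorem, observe that a smooth measure has locally constant density in any analytic chart (because the absolute value of an analytic Jacobian takes values in the discrete group $q^{\mathbb{Z}}$, hence is locally constant), reduce by compactness and total disconnectedness to finitely many polydisc charts, and finish with Fubini. The fiber/base bookkeeping in part (2) is right: writing $\omega_{X}=g\,dx_{1}\wedge\cdots\wedge dx_{d_{X}}$ and $\varphi^{*}\omega_{Y}=h\,dx_{1}\wedge\cdots\wedge dx_{d_{Y}}$ with $h$ depending only on the base coordinates, the identity $|g|_{F}=|g/h|_{F}\,|h|_{F}$ and Fubini give exactly the asserted density, and your closing remark correctly notes that vanishing of $g$ (or $f$) is harmless since the statement only asserts absolute continuity with the given density, not that the density is nowhere vanishing. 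One small presentational point: when you write $m=\sum_{j}\mathbf{1}_{V_{j}}m$ and then treat $V_{j}$ as a sub-polydisc, you should say explicitly that a finite open cover of a compact set in a totally disconnected space can be refined to a finite partition into polydiscs; you gesture at this but it is worth stating, since the constancy of the fiber measure used in part (1) relies on $V_{j}$ actually being a product set.
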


\section{An analytic criterion for the $(FRS)$ property }

Our goal in this section is to relax condition $c)$ of Theorem \ref{Auxilery Raminir theorem}.
We are motivated in proving the implication $v)\Longrightarrow iii)$
of Theorem \ref{Main theorem} and as we have seen at the introduction,
we wish to find a condition $c)'$ that is similar to Condition \ref{cond:For-any-finite},
such that it will imply the $(FRS)$ property (condition $(a)$ of
Theorem \ref{Auxilery Raminir theorem}).
\begin{defn}
\label{def:3.1} Let $F$ be a non-Archimedean local field, $X$ be
an $F$-analytic manifold and $\mu$ be a measure on $X$. Let $\mathcal{N}=\{N_{i}\}_{i\in I}$
be a local basis for the topology of $X$ at a point $x\in X$. We
say that $\mu$ is \textit{bounded with respect to $\mathcal{N}$},
if there exists a smooth measure $\lambda$ on $X$ and an open analytic
neighborhood $U$ of $x$, such that $\left|\frac{\mu(N_{i})}{\lambda(N_{i})}\right|$
is uniformly bounded on $\mathcal{N}_{U}:=\{N_{i}\in\mathcal{N}|N_{i}\subseteq U\}$.
\end{defn}

Let $\varphi:X\rightarrow Y$, $m$ and $F$ be as in Theorem \ref{Auxilery Raminir theorem}.
A possible relaxation $c')$ of $c)$, is to require $\varphi_{*}(m)$
to be bounded with respect to any local basis of the topology of $Y(F)$
at $\varphi(x)$. While this condition is equivalent to $a)$ and
$b)$ it is still to not weak enough for our purpose of proving Theorem
\ref{Main theorem}. A much weaker condition $c'')$ is to demand
that $\varphi_{*}(m)$ is bounded with respect to \textbf{some} local
basis at $\varphi(x)$. Unfortunately, the following example shows
that the latter demand is too weak: 
\begin{example*}
Consider the map $\varphi:\mathbb{A}_{\mathbb{Q}}^{2}\longrightarrow\mathbb{A}_{\mathbb{Q}}$
defined by $(x,y)\longmapsto x^{2}$. The fiber over $0$ is not reduced,
and thus $\varphi$ is not $(FRS)$ over $0$. Fix a finite extension
$k/\mathbb{Q}$ and embed $k$ in $\mathbb{Q}_{p}$ for some prime
$p$ (see Lemma \ref{Lemma 4.3}). Let $\lambda_{1},\lambda_{2}$
be the normalized Haar measure on $\mathbb{Q}_{p},\mathbb{Q}_{p}^{2}$
and let $m=1_{\mathbb{Z}_{p}^{2}}\cdot\lambda_{2}$ be a Schwartz
measure. Now consider the following collection $\mathcal{N}$ of sets
$B_{n}$ constructed as follows. Define $B_{n}^{1}:=\{x\in\mathbb{Z}_{p}|\left|x\right|\leq p^{-2n^{2}}\}$
and $B_{n}^{2}:=\{x\in\mathbb{Z}_{p}|\left|x-a_{n}\right|\leq p^{-4n}\}$,
where $a_{n}=p^{2n+1}$. Note that any $x\in B_{n}^{2}$ has norm
$p^{-2n-1}$ and thus is not a square, so $\varphi^{-1}(B_{n}^{2})=\slashed{O}$.
Denote $B_{n}=B_{n}^{1}\cup B_{n}^{2}$ and notice that $\mathcal{N}:=\{B_{n}\}_{n=1}^{\infty}$
is a local basis at $0$ and that: 
\[
\underset{n\rightarrow\infty}{\mathrm{lim}}\frac{\varphi_{*}m(B_{n})}{\lambda_{1}(B_{n})}=\underset{n\rightarrow\infty}{\mathrm{lim}}\frac{m(\varphi^{-1}(B_{n}^{1}))}{p^{-2n^{2}}+p^{-4n}}=\underset{n\rightarrow\infty}{\mathrm{lim}}\frac{p^{-n^{2}}}{p^{-2n^{2}}+p^{-4n}}\longrightarrow0.
\]

This shows that $\varphi$ satisfies condition $c)''$ but is not
$(FRS)$ at $(0,0)$.
\end{example*}
Luckily, we can relax $c)$ by demanding that $\varphi{}_{*}(m)$
is bounded with respect to \textbf{some} local basis at $\varphi(x)$,
if this basis is nice enough. In order to define precisely what we
mean, we introduce the notion of a local basis of bounded eccentricity. 

\subsection{\label{subsec:Local-basis-of} Local basis of bounded eccentricity}
\begin{defn}
Let $F$ be a local field, and $\lambda$ be a Haar measure on $F^{n}$.
\begin{enumerate}
\item A collection of sets $\mathcal{N}=\{N_{i}\}_{i\in I}$ in $F^{n}$
is said to have a \textit{bounded eccentricity} at $x\in F^{n}$,
if there exists a constant $C>0$ such that $\underset{i}{\mathrm{Sup}}\frac{\lambda(B_{min_{i}}(x))}{\lambda(B_{max_{i}}(x))}\leq C$,
where $B_{max_{i}}(x)$ is the maximal ball around $x$ that is contained
in $N_{i}$ and $B_{min_{i}}(x)$ is the minimal ball around $x$
that contains $N_{i}$.
\item We call $\mathcal{N}=\{N_{i}\}_{i\in\alpha}$ a \textit{local basis
of bounded eccentricity} at $x$, if it is a local basis of the topology
of $F^{n}$ at $x$, and there exists $\epsilon>0$, such that $\mathcal{N}_{\epsilon}:=\{N_{i}\in\mathcal{N}|N_{i}\subseteq B_{\epsilon}(x)\}$
has bounded eccentricity. 
\end{enumerate}
\end{defn}

\begin{rem*}
Note that $\mathcal{N}_{\epsilon}\neq\slashed{O}$ for any $\epsilon>0$
since it is a local basis at $x$. 
\end{rem*}
\begin{lem}
\label{Lemma 3.3}Let $\phi:F^{n}\longrightarrow F^{n}$ be an $F$-analytic
diffeomorphism. Let $\mathcal{N}=\{N_{i}\}_{i\in\alpha}$ be a local
basis of bounded eccentricity at $x\in F^{n}$. Then $\phi(\mathcal{N})$
is a local basis of bounded eccentricity at $\phi(x)$.
\end{lem}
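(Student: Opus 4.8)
The plan is to reduce the statement to a local, infinitesimal assertion about the derivative of $\phi$ at $x$. The key point is that an $F$-analytic diffeomorphism is, on a small enough ball around $x$, very close to its linearization $D\phi_x\in\mathrm{GL}_n(F)$: by the ultrametric inverse/implicit function theorem, there exists $\delta>0$ such that on $B_\delta(x)$ we have $\phi(y)=\phi(x)+D\phi_x(y-x)+R(y)$ with $\|R(y)\|$ strictly dominated by $\|D\phi_x(y-x)\|$, so that in fact $\|\phi(y)-\phi(x)\|=\|D\phi_x(y-x)\|$ on such a ball (the non-Archimedean triangle inequality is an equality once one term strictly dominates). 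Consequently $\phi$ maps the ball $B_r(x)$ onto the set $D\phi_x(B_r(x))$ translated to $\phi(x)$, for all sufficiently small $r$, and more generally $\phi$ sends the minimal/maximal balls controlling $N_i$ to the images of those balls under $D\phi_x$ (again translated), with errors that do not change the controlling radii.

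First I would invoke the hypothesis to fix $\epsilon>0$ so that $\mathcal{N}_\epsilon$ has bounded eccentricity with constant $C$, and then shrink to $\epsilon'\le\epsilon$ small enough that the linearization estimate above holds on $B_{\epsilon'}(x)$; since $\mathcal{N}_\epsilon$ is a local basis, $\mathcal{N}_{\epsilon'}$ is still nonempty and cofinal, and it suffices to prove bounded eccentricity for the tail $\{\phi(N_i):N_i\in\mathcal{N}_{\epsilon'}\}$ — indeed $\phi(\mathcal{N})$ is automatically a local basis at $\phi(x)$ since $\phi$ is a homeomorphism, and prepending finitely... well, rather, dropping the finitely many sets not contained in $B_{\epsilon'}(x)$ only shrinks the supremum defining eccentricity, so it is harmless. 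Next, for $N_i\in\mathcal{N}_{\epsilon'}$ let $B_{\min_i}(x)\supseteq N_i\supseteq B_{\max_i}(x)$ be the controlling balls; applying $\phi$ and using that $\phi$ restricted to $B_{\epsilon'}(x)$ has the same effect on radii as the linear map $D\phi_x$, I get that the maximal ball around $\phi(x)$ inside $\phi(N_i)$ and the minimal ball around $\phi(x)$ containing $\phi(N_i)$ have measures comparable, up to the constant $|\det D\phi_x|_F$, to $\lambda(D\phi_x(B_{\max_i}(x)))$ and $\lambda(D\phi_x(B_{\min_i}(x)))$ respectively. Since $\lambda(D\phi_x(B))=|\det D\phi_x|_F\cdot\lambda(B)$ for any ball $B$, the ratio of these two measures equals the ratio $\lambda(B_{\min_i}(x))/\lambda(B_{\max_i}(x))$, which is $\le C$. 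Hence $\{\phi(N_i):N_i\in\mathcal{N}_{\epsilon'}\}$ has bounded eccentricity, and this exhibits $\phi(\mathcal{N})$ as a local basis of bounded eccentricity at $\phi(x)$.

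The main obstacle is the precise bookkeeping in the linearization step: I need to make sure that the ultrametric estimate $\|\phi(y)-\phi(x)\|=\|D\phi_x(y-x)\|$ genuinely holds on a full ball $B_{\epsilon'}(x)$ (not just asymptotically), and that this forces $\phi(B_r(x))$ to be \emph{exactly} the translate of $D\phi_x(B_r(x))$, so that the maximal inscribed and minimal circumscribed balls transform predictably. This is where the non-Archimedean setting is essential — over $\mathbb{R}$ a diffeomorphism can distort small balls into ellipsoids and the argument would need the full spectrum of $D\phi_x$ — whereas here a ball maps to a ball of the same "shape" because $\mathrm{GL}_n(F)$ acts on the ball $\mathcal{O}_F^n$ through its reduction, scaling measure by $|\det|_F$ and nothing else finer. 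Once that geometric fact is nailed down, the eccentricity bound transfers verbatim with the same constant $C$; the determinant factor cancels in the ratio and plays no role. I would also remark that the same argument shows bounded eccentricity is preserved under $F$-analytic diffeomorphisms between open subsets of $F^n$, which is the form actually needed when patching charts on an analytic manifold in the sequel.
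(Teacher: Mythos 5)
Your high-level strategy is the same as the paper's: linearize $\phi$ at $x$ and compare the transformed sets to balls controlled by the differential $A=d\phi_x$. Your observation that in the ultrametric world $\|\phi(y)-\phi(x)\|=\|A(y-x)\|$ holds exactly on a small ball, and that $\phi(B_r(x))$ is then precisely $\phi(x)+A(B_r(0))$, is correct and actually a cleaner version of the paper's two-sided $1/C < \cdots < C$ estimates. However, the step where you conclude that the eccentricity constant is preserved \emph{exactly}, because ``$\mathrm{GL}_n(F)$ acts on the ball $\mathcal{O}_F^n$ through its reduction, scaling measure by $|\det|_F$ and nothing else finer,'' is false, and the argument has a genuine gap there.

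Concretely: a general $A\in\mathrm{GL}_n(F)$ does \emph{not} map balls to balls. Writing $A=U_1DU_2$ with $U_1,U_2\in\mathrm{GL}_n(\mathcal{O}_F)$ and $D=\mathrm{diag}(\pi^{a_1},\dots,\pi^{a_n})$ (Cartan/Smith normal form), the image $A(B_r(0))$ is a ``box'' with side radii $r|\pi|^{a_i}$, up to an isometry. The largest ball around $0$ contained in that box has radius $r\cdot\min_i|\pi|^{a_i}=r/\|A^{-1}\|$, while the smallest ball containing it has radius $r\cdot\max_i|\pi|^{a_i}=r\|A\|$. Hence applying $\phi$ turns an annular gap of ratio $\lambda(B_{\min_i})/\lambda(B_{\max_i})\le C$ into one of ratio at most $(\|A\|\|A^{-1}\|)^n\cdot C$, not $C$. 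Your claimed cancellation via $|\det A|_F$ is an artifact of comparing $\lambda(A(B))$ to $\lambda(B)$, which says nothing about the inscribed and circumscribed \emph{balls} of $A(B)$ --- and it is precisely those balls that the definition of eccentricity uses. The factor $\|A\|\|A^{-1}\|$ equals $1$ only when $A$ is a scalar multiple of an element of $\mathrm{GL}_n(\mathcal{O}_F)$, so the constant genuinely degrades in general. This is exactly what the paper's proof does: it bounds $B_{\max_i/(C\|A^{-1}\|)}(\phi(x))\subseteq\phi(N_i)\subseteq B_{C\cdot\min_i\cdot\|A\|}(\phi(x))$ and accepts the new eccentricity constant $C^{2n}\|A\|^n\|A^{-1}\|^n D$. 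Your argument, once you replace the false ``balls go to balls'' claim with the correct ``balls go to boxes with aspect ratio $\le\|A\|\|A^{-1}\|$'' statement, reproduces the paper's bound (indeed with a slightly better constant, since you can drop the extra $C^{2n}$ by using the exact ultrametric equality). The conclusion --- bounded eccentricity is preserved --- is therefore right, but the reasoning given for the constant is not, and as written the proof has a hole at its central step.
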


\begin{proof}
Let $d\phi_{x}=A$ be the differential of $\phi$ at $x$. Since $\phi$
is a diffeomorphism, then for any $C>1$, there exists $\delta,\delta'>0$
such that for any $y\in B_{\delta}(x)$:
\[
\frac{1}{C}<\frac{\left|\phi(y)-\phi(x)\right|_{F}}{\left|A\cdot(y-x)\right|_{F}}<C,
\]
and for any $z\in B_{\delta'}(\phi(x))$ we have:
\[
\frac{1}{C}<\frac{\left|\phi^{-1}(z)-x\right|_{F}}{\left|A^{-1}\cdot(z-\phi(x))\right|_{F}}<C.
\]

We can choose small enough $\delta,\delta'$ such that $\mathcal{N}_{\delta}$
is a collection of sets of bounded eccentricity and $\phi(\mathcal{N}_{\delta})\supseteq\phi(\mathcal{N})_{\delta'}$.
We now claim that $\mathcal{M}_{\delta'}:=\phi(\mathcal{N})_{\delta'}$
is a collection of sets of bounded eccentricity at $\phi(x)$. Let
$B_{min_{i}}(x)$ be the minimal ball that contains $N_{i}\in\mathcal{N}_{\delta}$
and $B_{max_{i}}(x)$ be the maximal ball that is contained in $N_{i}$.
Notice that for any $y\in B_{min_{i}}(x)\subseteq B_{\delta}(x)$
we have 
\[
\left|\phi(y)-\phi(x)\right|_{F}<C\cdot\left|A\cdot(y-x)\right|_{F}\leq C\cdot\left\Vert A\right\Vert \cdot\left|y-x\right|_{F}\leq C\cdot min_{i}\cdot\left\Vert A\right\Vert ,
\]
thus $\phi(N_{i})\subseteq\phi(B_{min_{i}}(x))\subseteq B_{C\cdot min_{i}\cdot\left\Vert A\right\Vert }(\phi(x))$.
Similarly, for any $z\in B_{max_{i}/(C\cdot\left\Vert A^{-1}\right\Vert )}(\phi(x))$
we have that 
\[
\left|\phi^{-1}(z)-x\right|_{F}<C\cdot\left|A^{-1}\cdot(z-\phi(x))\right|_{F}\leq C\cdot\left\Vert A^{-1}\right\Vert \cdot\frac{max_{i}}{C\cdot\left\Vert A^{-1}\right\Vert }=max_{i}.
\]

Therefore, $\phi^{-1}(B_{max_{i}/(C\cdot\left\Vert A^{-1}\right\Vert )}(\phi(x)))\subseteq B_{max_{i}}(x)\subseteq N_{i}$
and thus $B_{max_{i}/(C\cdot\left\Vert A^{-1}\right\Vert )}(\phi(x))\subseteq\phi(N_{i})$.
Thus we get that 
\[
B_{max_{i}/(C\cdot\left\Vert A^{-1}\right\Vert )}(\phi(x))\subseteq\phi(N_{i})\subseteq B_{C\cdot min_{i}\cdot\left\Vert A\right\Vert }(\phi(x)),
\]
 for any $i$. By assumption, there exists some $D>0$ such that $\frac{\lambda(B_{min_{i}}(x))}{\lambda(B_{max_{i}}(x))}<D$
for any set $N_{i}\in\mathcal{N}_{\delta}$. Hence: 
\[
\frac{\lambda(B_{C\cdot min_{i}\cdot\left\Vert A\right\Vert }(\phi(x)))}{\lambda(B_{max_{i}/(C\cdot\left\Vert A^{-1}\right\Vert )}(\phi(x)))}\leq C^{2n}\left\Vert A\right\Vert ^{n}\cdot\left\Vert A^{-1}\right\Vert ^{n}\cdot D,
\]
and $\mathcal{M}_{\delta'}$ has bounded eccentricity.
\end{proof}
Lemma \ref{Lemma 3.3} implies that the following definition is well
defined:
\begin{defn}
Let $X$ be an $F$-analytic manifold and $\lambda$ be a Haar measure
on $F^{n}$.
\begin{enumerate}
\item A local basis $\mathcal{N}$ at $x\in X$ is said to have a \textit{bounded
eccentricity} if given an $F$-analytic diffeomorphism $\phi$ between
an open subset $W\subseteq F^{n}$ and an open neighborhood $U$ of
$x$, we have that $\mathcal{\widetilde{N}}=\{\phi^{-1}(N)|N\in\mathcal{N},\,N\subseteq U\}$
is a local basis of bounded eccentricity. 
\item A measure $m$ on $X$ is said to be $\mathcal{N}$\textit{-bounded},
if there exists $\epsilon>0$ such that:
\[
\underset{N\in\mathcal{N}_{\epsilon}}{\mathrm{sup}}\frac{m(N)}{\lambda(N)}<\infty.
\]
\end{enumerate}
\end{defn}

\subsection{Proof of Theorem \ref{Main auxilary theorem}}

It is easy to see that $c)\Longrightarrow c')$. The proof of the
implication $c')\Longrightarrow a)$ is a variation of the proof of
$c)\Longrightarrow a)$ of Theorem \ref{Auxilery Raminir theorem}
(see \cite[Section 3.7]{AA16}). Let $k$ be a finitely generated
field of characteristic $0$, $\varphi:X\longrightarrow Y$ be a morphism
of smooth $k$-varieties $X,Y$ and let $x\in X(k)$. Assume that
condition $c')$ of Theorem \ref{Main auxilary theorem} holds. Let
$Z=\varphi^{-1}(\varphi(x))$ and denote by $X^{S}$ the smooth locus
of $\varphi$. The following lemma is a slight variation of \cite[Claim 3.19]{AA16}.
Since we use the constructions presented in the proof of \cite[Claim 3.19]{AA16},
and for the convenience of the reader, we write the full steps and
use similar notations as well.
\begin{lem}
There exists a Zariski neighborhood $U$ of $x$ such that $Z\cap X^{S}\cap U$
is a dense subvariety of $Z\cap U$. 
\end{lem}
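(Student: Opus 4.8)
The goal is to show that under the hypothesis $c')$ of Theorem \ref{Main auxilary theorem}, there is a Zariski neighborhood $U$ of $x$ in which $Z\cap X^S\cap U$ is dense in $Z\cap U$; here $Z=\varphi^{-1}(\varphi(x))$ and $X^S$ is the smooth locus of $\varphi$. Following the strategy of \cite[Claim 3.19]{AA16}, the plan is to argue by contradiction: assume no such $U$ exists, so that $x$ lies in the closure of an irreducible component $W$ of $Z$ that is entirely contained in $X\setminus X^S$, i.e. a component of the fiber along which $\varphi$ is nowhere smooth. Shrinking $X$ to an affine neighborhood of the generic point of such a bad component, one wants to produce, over a suitable non-Archimedean local field $F\supseteq k'$, a non-negative Schwartz measure $m$ on $X(F)$ not vanishing at $x$ whose pushforward $\varphi_*(m)$ fails to be bounded with respect to \emph{every} local basis of bounded eccentricity at $\varphi(x)$ — contradicting $c')$.

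The key mechanism is the same one used in \cite{AA16}: near a point of the bad component, choose local analytic coordinates so that $\varphi$ looks, up to the smooth directions, like a map whose fiber over $\varphi(x)$ is positive-dimensional but along which the Jacobian drops rank identically. Concretely, after restricting to a curve or to a one-parameter family transverse to $W$ and applying Proposition \ref{Prop 2.7}(2), the density of $\varphi_*(m)$ with respect to a nonvanishing top form $|\omega_Y|_F$ is given by a fiber integral $\int_{\varphi^{-1}(y)(F)} f\,|\omega_X/\varphi^*\omega_Y|_F$; the point is that because the fiber $\varphi^{-1}(\varphi(x))$ has a component of dimension strictly larger than expected on which $\varphi^*\omega_Y$ degenerates, one can arrange $f\ge 0$ not vanishing at $x$ so that this density blows up as $y\to\varphi(x)$ at a controlled polynomial rate along a sequence of balls. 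One then checks that balls (and hence, by bounded eccentricity, all sets $N$ in the local basis $\mathcal N_\epsilon$) around $\varphi(x)$ capture enough of this blow-up that $\frac{\varphi_*(m)(N)}{\lambda(N)}$ is unbounded: the bounded eccentricity assumption is exactly what prevents the pathology of the Example (where the $N_n$ were thin slivers) and forces $N$ to contain a ball $B_{\min_i}$ comparable in measure to $B_{\max_i}$, so the measure ratio on $N$ is comparable to the measure ratio on a genuine ball, which is the density up to a constant.

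I would carry out the steps in this order: (1) reduce to $X$ affine and assume for contradiction that every Zariski neighborhood $U$ of $x$ has $Z\cap X^S\cap U$ not dense in $Z\cap U$, extracting a component $W\subseteq Z\setminus X^S$ with $x\in\overline W$; (2) pass to a finite extension $k'/k$ over which $W$ and the relevant data are defined, and invoke $c')$ to get $F\supseteq k'$, a basis $\mathcal N$ of bounded eccentricity at $\varphi(x)$, and a non-negative Schwartz $m$ not vanishing at $x$ with $\varphi_*(m)$ $\mathcal N$-bounded; (3) choose analytic coordinates adapted to $W$ near a suitable $F$-point in the support of $m$, write $\varphi_*(m)$ via the fiber-integral formula of Proposition \ref{Prop 2.7}(2), and show the density is unbounded near $\varphi(x)$ using that $W$ contributes a fiber of excess dimension where $\varphi^*\omega_Y$ vanishes; (4) translate unboundedness of the density into unboundedness of $\frac{\varphi_*(m)(N)}{\lambda(N)}$ over $\mathcal N_\epsilon$ by comparing each $N$ to its inscribed and circumscribed balls and using the bounded-eccentricity constant $C$; (5) conclude the contradiction.

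The main obstacle I expect is step (3)–(4): making the blow-up of the fiber-integral density \emph{quantitative} enough and compatible with an arbitrary bounded-eccentricity basis rather than with the standard basis $\{p^n\mathbb Z_p^N\}$. In \cite[Claim 3.19]{AA16} one only needs the failure along the standard polydisc basis, and there the estimate is clean; here I must ensure that the lower bound on $\frac{\varphi_*(m)(N)}{\lambda(N)}$ genuinely only uses that $N$ is squeezed between two balls of comparable measure, so I would first prove a small lemma saying that if a measure $\mu$ is $\mathcal N$-bounded for a basis $\mathcal N$ of bounded eccentricity (constant $C$), then $\sup_{B}\frac{\mu(B)}{\lambda(B)}$ over the \emph{balls} $B=B_{\min_i}(x)$ appearing is also finite (up to the factor $C$), which reduces everything to the standard ball estimate and lets the rest of the argument of \cite{AA16} go through essentially verbatim, via Lemma \ref{Lemma 3.3} to move between the manifold $Y(F)$ and $F^n$.
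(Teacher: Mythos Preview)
Your overall architecture matches the paper's: reduce to an irreducible component $Z_i$ of $Z$ through $x$ with $Z_i\cap X^S=\emptyset$, pass to a finite extension $k'/k$ over which the relevant data are defined, invoke $c')$ to obtain $F$, $m$, and a bounded-eccentricity basis $\mathcal N$, pick a smooth $F$-point $p$ of (an open subset of) $Z_i$ in $\mathrm{supp}(m)$, and derive a contradiction by producing an explicit lower bound $\frac{\varphi_*(m)(N)}{\lambda(N)}\to\infty$. Your final reduction (``small lemma'') --- that $\mathcal N$-boundedness with eccentricity constant $C$ forces boundedness over the inscribed balls $B_{\max_i}$ --- is exactly how the paper uses bounded eccentricity, and Lemma~\ref{Lemma 3.3} is indeed invoked to transport $\mathcal N$ through the chart $\alpha_Y$.

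The genuine gap is step (3). You propose to compute the density of $\varphi_*(m)$ via the fiber-integral formula of Proposition~\ref{Prop 2.7}(2) and show it blows up. But that proposition requires $\varphi$ to be \emph{smooth}, and the whole point of the bad component is that $\varphi$ is nowhere smooth along it; the formula simply does not apply at or near $p$, and there is no well-defined ``density'' to speak of. Your suggested fix of ``restricting to a curve transverse to $W$'' does not help: the restricted map is still singular at the relevant point. The paper avoids any density computation. Instead, after putting $\varphi$ in analytic coordinates so that $\ker d\psi_z=\mathrm{span}\{e_1,\dots,e_{\dim X-\dim Y+r}\}$ for $z\in\mathcal O_F^{\dim Z_i}$, it builds explicit ``boxes''
\[
A_\epsilon=\{x:|x_k|<\epsilon^{n_k}\},\qquad n_k\in\{0,1,2\},
\]
with $n_k=0$ along $Z_i$, $n_k=1$ along the extra kernel directions, $n_k=2$ along the remaining directions, and shows by Taylor expansion that $\psi(A_{D\sqrt\epsilon})\subset B_\epsilon(0)$. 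A direct volume count gives $\mu(A_{D\epsilon_j})\ge D'\epsilon_j^{2\dim Y-r}$ with $\epsilon_j=\sqrt{\max_j}$, and then bounded eccentricity yields
\[
\frac{\psi_*(\mu)(M_j)}{\lambda(M_j)}\ge \frac{1}{C}\cdot\frac{\mu(A_{D\epsilon_j})}{\lambda(B_{\max_j}(0))}\ge \frac{D'}{C}\,\epsilon_j^{-r}\to\infty.
\]
So the quantitative input you need is this preimage-volume estimate for balls, not a density formula; once you have it, your step (4) is precisely the one-line inequality above.
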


\begin{proof}
Let $Z_{1},...,Z_{n}$ be the absolutely irreducible components of
$Z$ containing $x$. After restricting to an open neighborhood of
$x$ that does not intersect the other irreducible components, it
is enough to show that $Z_{i}\cap X^{S}$ is Zariski dense in $Z_{i}$
for any $i$. Since $X^{S}$ is open, it is enough to show that $Z_{i}\cap X^{S}$
is non-empty for any $i$. 

Assume that $Z_{i}\cap X^{S}=\slashed{O}$ for some $i$. Then $\mathrm{dimker}d\varphi_{z}>\mathrm{dim}X-\mathrm{dim}Y$
for any $z\in Z_{i}(\overline{k})$. By the upper semi-continuity
of $\mathrm{dimker}d\varphi$, there is a non-empty open set $W_{i}\subseteq Z_{i}$
and an integer $r\ge1$ such that $\mathrm{dim}\mathrm{ker}\mathrm{d}\varphi|_{z}=\mathrm{dim}X-\mathrm{dim}Y+r$
for all $z\in W_{i}(\overline{k})$ and such that $W_{i}\cap Z_{j}=\slashed{O}$
for any $j\neq i$. Let $k'/k$ be a finite extension such that both
$Z_{i},W_{i}$ are defined over $k'$ and $W_{i}^{sm}(k')\neq\slashed{O}$.
By \cite[Lemma 3.14]{AA16}, we can choose $k'$ such that $x\in\overline{W_{i}^{sm}(F)}$
for any non-Archimedean local field $F\supseteq k'$. 

By our assumption, there exists a non-Archimedean local field $F\supseteq k'$
and a non-negative Schwartz measure $m$ on $X(F)$ that does not
vanish at $x$ and such that $\varphi_{*}m$ is bounded with respect
to some local basis $\mathcal{N}$ (at $\varphi(x)$) of bounded eccentricity.
Since $x\in\overline{W_{i}^{sm}(F)}$, there exists a point $p\in W_{i}^{sm}(F)\cap\mathrm{supp}(m)$. 

By the implicit function theorem, there exist neighborhoods $U_{X}\subseteq X(F)$
and $U_{Y}\subseteq Y(F)$ of $p$ and $\varphi(x)=\varphi(p)$ respectively,
analytic diffeomorphisms $\alpha_{X}:U_{X}\rightarrow\mathcal{O}_{F}^{\mathrm{dim}X},\,\alpha_{Y}:U_{Y}\rightarrow\mathcal{O}_{F}^{\mathrm{dim}Y}$
and $\alpha_{Z_{i}}:U_{X}\cap W_{i}^{sm}(F)\rightarrow\mathcal{O}_{F}^{\mathrm{dim}Z_{i}}$
such that $\alpha_{X}(p)=0$, $\alpha_{Y}(\varphi(p))=0$, and an
analytic map $\psi:\mathcal{O}_{F}^{\mathrm{dim}X}\longrightarrow\mathcal{O}_{F}^{\mathrm{dim}Y}$
such that the following diagram commutes: 
\[
\begin{array}{ccccc}
U_{X}\cap W_{i}^{sm}(F) & \hookrightarrow & U_{X} & \overset{\varphi|_{U_{X}}}{\longrightarrow} & U_{Y}\\
\downarrow\alpha_{Z_{i}} & \, & \downarrow\alpha_{X} & \, & \downarrow\alpha_{Y}\\
\mathcal{O}_{F}^{\mathrm{dim}Z_{i}} & \stackrel{j}{\hookrightarrow} & \mathcal{O}_{F}^{\mathrm{dim}X} & \stackrel{\psi}{\longrightarrow} & \mathcal{O}_{F}^{\mathrm{dim}Y},
\end{array}
\]
where $j:\mathcal{O}_{F}^{\mathrm{dim}Z_{i}}\rightarrow\mathcal{O}_{F}^{\mathrm{dim}X}$
is the inclusion to the first $\mathrm{dim}Z_{i}$ coordinates. After
an analytic change of coordinates we may assume that:
\[
\mathrm{ker}d\psi_{z}=\mathrm{span}\{e_{1},...,e_{\mathrm{dim}X-\mathrm{dim}Y+r}\},
\]
 for any $z\in\mathcal{O}_{F}^{dimZ_{i}}$. By Lemma \ref{Lemma 3.3},
we have that $\mathcal{M}:=\alpha_{Y}(\mathcal{N})$ is a local basis
of bounded eccentricity at $0\in\mathcal{O}_{F}^{\mathrm{dim}Y}$.
Note that $\mu:=(\alpha_{X})_{*}(1_{U_{X}}\cdot m)$ is a non-negative
Schwartz measure that does not vanish at $0$, and that $\psi_{*}(\mu)$
is $\mathcal{M}$-bounded. By Proposition \ref{Prop 2.6}, after restricting
to a small enough ball around $0$ and applying a homothety, we can
assume that $\mu$ is the normalized Haar measure. 

As part of the data, for any $M_{j}\in\mathcal{M}$ we are given by
$B_{max_{j}}(0)$ and $B_{min_{j}}(0)$, and there exists $\delta,C>0$
such that for any $M_{j}\in\mathcal{M}_{\delta}:=\{M_{j}\in\mathcal{M}|M_{j}\subseteq B_{\delta}(0)\}$,
we have $B_{max_{j}}(0)\subseteq M_{j}\subseteq B_{min_{j}}(0)$ and
$\frac{\lambda(B_{min_{j}})}{\lambda(B_{max_{j}})}\leq C$. For any
$0<\epsilon<1$, set
\[
A_{\epsilon}:=\left\{ (x_{1},...,x_{\mathrm{dim}X})\in\mathcal{O}_{F}^{\mathrm{dim}X}|\left|x_{k}\right|<\epsilon^{n_{k}}\right\} ,
\]
where $n_{k}=0$ if $1\leq k\leq\mathrm{dim}Z_{i}$; $n_{k}=1$ for
$\mathrm{dim}Z_{i}+1\leq k\leq\mathrm{dim}X-\mathrm{dim}Y+r$; and
$n_{k}=2$ for $\mathrm{dim}X-\mathrm{dim}Y+r+1\leq k\leq\mathrm{dim}X$. 

By choosing $\delta$ small enough, we may find a constant $D>0$
such that $\psi(A_{D\sqrt{\epsilon}})\subseteq B_{\epsilon}(0)$ for
every $\epsilon<\delta$. In particular, for any $M_{j}\in\mathcal{M}_{\delta}$
we get that $\psi(A_{D\cdot\sqrt{max_{j}}})\subseteq B_{max_{j}}(0)$
so $\psi^{-1}(B_{max_{j}}(0))\supseteq A_{D\cdot\sqrt{max_{j}}}$.
Denote $\epsilon_{j}:=\sqrt{max_{j}}$ and notice that there exists
a constant $L>0$ such that for any $j$ with $M_{j}\in\mathcal{M}_{\delta}$,
it holds that: 
\begin{eqnarray*}
\mu(A_{D\epsilon_{j}}) & \geq & L\cdot\left(D\epsilon_{j}\right)^{\mathrm{dim}X-\mathrm{dim}Y+r-\mathrm{dim}Z_{i}+2(\mathrm{dim}Y-r)}\\
 & = & D'\cdot\epsilon_{j}^{\mathrm{dim}X+\mathrm{dim}Y-r-\mathrm{dim}Z_{i}}\\
 & \geq & D'\cdot\epsilon_{j}^{2\mathrm{dim}Y-r},
\end{eqnarray*}

where $D'$ is some positive constant. Altogether, we have: 
\begin{eqnarray*}
\frac{\psi_{*}(\mu)(M_{j})}{\lambda(M_{j})} & \geq & \frac{\psi_{*}(\mu)(B_{max_{j}}(0))}{\lambda(B_{min_{j}}(0))}\geq\frac{1}{C}\frac{\psi_{*}(\mu)(B_{max_{j}}(0))}{\lambda(B_{max_{j}}(0))}\\
 & \geq & \frac{1}{C}\frac{\mu(A_{D\epsilon_{j}})}{\lambda(B_{max_{j}}(0))}\geq\frac{D'}{C}\frac{\epsilon_{j}^{2dimY-r}}{\epsilon_{j}^{2dimY}}\geq\frac{D'}{C}\epsilon_{j}^{-r}.
\end{eqnarray*}
Since $\mathcal{M}_{\delta}$ is a local basis, the above equation
is true for arbitrary small $\epsilon_{j}$, so we have a contradiction
to the $\mathcal{M}$-boundedness of $\psi_{*}(\mu)$.
\end{proof}
\begin{cor}
We have that $\varphi$ is flat at $x$, and that there is a Zariski
neighborhood $U_{0}$ of $x$ such that $Z\cap U_{0}$ is reduced
and a local complete intersection $(LCI)$.
\end{cor}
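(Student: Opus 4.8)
The plan is to deduce the Corollary from the Lemma by a purely commutative-algebraic argument, using two standard inputs: the Miracle Flatness Theorem and Serre's criterion that a Noetherian scheme is reduced if and only if it satisfies $R_0+S_1$. Since $X$ and $Y$ are smooth, their local rings are regular and in particular Cohen--Macaulay; after shrinking around $x$ and $\varphi(x)$ we may assume $X$ and $Y$ are irreducible, of dimensions $d_X$ and $d_Y$ respectively, and that $\varphi(x)\in Y(k)$ is cut out near $\varphi(x)$ by a regular system of parameters $t_1,\dots,t_{d_Y}$ of $\mathcal{O}_{Y,\varphi(x)}$, so that $Z=\varphi^{-1}(\varphi(x))$ is cut out in $X$ near $x$ by the $d_Y$ functions $\varphi^{*}t_1,\dots,\varphi^{*}t_{d_Y}$.

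First I would prove flatness at $x$ by pinning down the fiber dimension. The Lemma provides a Zariski neighbourhood $U$ of $x$ with $Z\cap X^S\cap U$ dense in $Z\cap U$. On $X^S$ the morphism $\varphi$ is smooth of relative dimension $d_X-d_Y$, so $Z\cap X^S$ is smooth of pure dimension $d_X-d_Y$ wherever non-empty; by density every irreducible component of $Z\cap U$ contains a dense open subset lying in $X^S$, hence has dimension exactly $d_X-d_Y$. On the other hand, by Krull's principal ideal theorem every component of $Z$ through $x$ has dimension $\ge d_X-d_Y$, being defined by $d_Y$ equations in $X$. Hence $\dim_x Z=d_X-d_Y$, and since $\mathcal{O}_{X,x}$ is Cohen--Macaulay and $\mathcal{O}_{Y,\varphi(x)}$ is regular, the Miracle Flatness Theorem (\cite[Theorem 26.2.11]{Vak}) shows $\varphi$ is flat at $x$.

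Next, since the flat locus of a finite type morphism of Noetherian schemes is open, $\varphi$ is flat on some open $U_0$ with $x\in U_0\subseteq U$. For each $z\in Z\cap U_0$, the ring $\mathcal{O}_{X,z}$ is flat over $\mathcal{O}_{Y,\varphi(x)}$, so the regular sequence $t_1,\dots,t_{d_Y}$ pulls back to a regular sequence $\varphi^{*}t_1,\dots,\varphi^{*}t_{d_Y}$ in $\mathcal{O}_{X,z}$; as these functions cut out $Z$ near $z$ and $X$ is smooth, $Z$ is a complete intersection in a neighbourhood of $z$. Letting $z$ range over $Z\cap U_0$ shows $Z\cap U_0$ is a local complete intersection, hence Cohen--Macaulay, hence it satisfies $S_1$. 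It is also generically reduced: $Z\cap X^S\cap U_0$ is a dense open subset of $Z\cap U_0$ (dense by the Lemma, since $U_0\subseteq U$) which, being a fiber of the smooth morphism $\varphi|_{X^S}$, is smooth over $k$ and therefore reduced, so it contains every generic point of $Z\cap U_0$; thus $Z\cap U_0$ satisfies $R_0$. By Serre's criterion $Z\cap U_0$ is reduced, finishing the proof.

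I expect the only real content to be the first step: converting the density statement of the Lemma into the sharp equality $\dim_x Z=d_X-d_Y$ that feeds Miracle Flatness. Once flatness at $x$ is secured, the $(LCI)$ property is formal (flat pullback of a regular sequence), and reducedness is immediate from the Cohen--Macaulay property together with the generic smoothness already built into the Lemma; a minor bookkeeping subtlety is that the Lemma only asserts density in $Z\cap U$ rather than equidimensionality of $Z$ near $x$, but this is supplied by the uniform fiber dimension of the smooth morphism $\varphi|_{X^S}$.
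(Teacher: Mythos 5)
Your proposal is correct and follows essentially the same route as the paper: use the density lemma to pin down $\dim_x Z = \dim X - \dim Y$, invoke Miracle Flatness to get flatness at $x$, deduce $(LCI)$ hence Cohen--Macaulay, and combine with generic reducedness (from the dense smooth fiber locus) via the $S_1 + R_0$ criterion. The only cosmetic difference is that you supply an explicit Krull lower bound for the fiber dimension where the paper gets equality directly from the fact that each irreducible component of $Z$ through $x$ meets the smooth locus of $\varphi$; both arguments are fine.
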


\begin{proof}
Let $Z_{1},...,Z_{n}$ be the absolutely irreducible components of
$Z$ containing $x$. By the last lemma, each $Z_{i}$ contains a
smooth point of $\varphi$, so $\mathrm{dim}_{x}Z:=\underset{i}{\mathrm{max}}\mathrm{dim}Z_{i}=\mathrm{dim}X-\mathrm{dim}Y$.
Hence, we may find a neighborhood $U_{0}$ of $x$ such that $\varphi|_{U_{0}}$
is flat over $\varphi(x)$ (and in particular flat at $x$). As a
consequence, we get that $Z\cap U_{0}$ is an $(LCI)$, and in particular
Cohen-Macaulay. Since $Z\cap X^{S}\cap U_{0}$ is a dense in $Z\cap U_{0}$
and $Z\cap X^{S}=Z^{sm}$ (see e.g \cite[III.10.2]{Har77}) it follows
that $Z\cap U_{0}$ is generically reduced. Since $Z\cap U_{0}$ is
also Cohen-Macaulay, it now follows from (e.g \cite[Exercise 26.3.B]{Vak})
that it is reduced.
\end{proof}
Without loss of generality, we assume $X=U_{0}$. The following lemma
implies that $\varphi$ is $(FRS)$ at $x$, and thus finishes the
proof of Theorem \ref{Main auxilary theorem}:
\begin{lem}
$x$ is a rational singularity of $Z$. 
\end{lem}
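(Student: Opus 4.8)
The plan is to adapt the argument for the analogous step in \cite[Section 3.7]{AA16}, which deduces rationality of singularities from the existence of a pushed-forward measure with continuous density; here we must instead start from $\mathcal{N}$-boundedness with respect to a local basis of bounded eccentricity, and the point of the bounded-eccentricity hypothesis is precisely that it suffices to run that argument. So first I would recall that, by the preceding corollary, we may assume $\varphi\colon X\to Y$ is flat, $Z=\varphi^{-1}(\varphi(x))$ is reduced and a local complete intersection near $x$, hence Cohen--Macaulay; by the local characterization of rational singularities (the Lemma after the definition of rational singularities), it remains to show that $Z$ is normal at $x$ and that every top regular form on $Z^{sm}$ extends to a strong resolution $p\colon\widetilde{Z}\to Z$. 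Since an $(LCI)$ that is regular in codimension $1$ is normal (Serre's criterion), and normality and the form-extension property are both local and Zariski-open conditions, I would reduce to producing, for a well-chosen non-Archimedean $F\supseteq k'$, the analytic input that forces these.

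The core of the argument is the measure-theoretic contradiction, exactly parallel to the previous lemma but now aimed at the resolution rather than at the smooth locus of $\varphi$. Concretely: suppose $x$ were not a rational singularity of $Z$. Using a strong resolution $p\colon\widetilde{Z}\to Z$ and the failure of the form-extension criterion, one finds a top form $\omega$ on $Z^{sm}$ whose pullback to $\widetilde{Z}$ has a pole along some exceptional divisor $E$, with pole order $r\ge 1$. As in \cite[Section 3.7]{AA16}, one trivializes locally on $\widetilde{Z}$ so that $E=\{t=0\}$, pulls back the Schwartz measure $m$ along a smooth chart, and estimates the mass that $\varphi_*m$ (equivalently, the relevant pushforward through $\psi$ after the implicit-function-theorem normalization of the previous lemma) puts on small balls: the pole of order $r$ produces, on a ball of radius $\epsilon$ around $\varphi(x)$, a lower bound of the shape $\mathrm{const}\cdot\epsilon^{\dim Y}\cdot\epsilon^{-c}$ for some $c>0$ coming from the pole, i.e. a density blowing up like $\epsilon^{-c}$. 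Sandwiching an arbitrary $N\in\mathcal{N}_{\epsilon}$ between $B_{\max}$ and $B_{\min}$ and using $\lambda(B_{\min})/\lambda(B_{\max})\le C$, this forces $\tfrac{\varphi_*m(N)}{\lambda(N)}\gtrsim \mathrm{const}\cdot (\mathrm{radius}(B_{\max}))^{-c}\to\infty$ along the local basis, contradicting $\mathcal{N}$-boundedness. Thus $x$ must be a rational singularity of $Z$; normality then follows a posteriori (or is obtained along the way, since the same estimate applied to the normalization obstructs non-normality), and the proof of Theorem \ref{Main auxilary theorem} is complete.

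The main obstacle I expect is bookkeeping the geometry that converts "$\omega$ fails to extend over $p$" into a quantitative lower bound for the pushed-forward measure on balls. In \cite{AA16} this is done for a measure with continuous density, where one compares densities directly; here one only controls ratios $\mu(N)/\lambda(N)$ over the basis $\mathcal{N}$, so the argument must be rephrased entirely in terms of measures of balls, and it is crucial that the bounded-eccentricity constant $C$ lets one pass freely between $B_{\max}$, $B_{\min}$ and $N$ without losing more than a bounded factor — this is exactly where the earlier example (the local basis with the shrinking "spike" $B_n^2$) is defeated. A secondary technical point is ensuring, via Lemma \ref{Lemma 3.3} and the implicit-function-theorem charts of the previous lemma, that the bounded-eccentricity property survives all the analytic changes of coordinates and pushforwards used to reach the model situation on $\mathcal{O}_F^{\dim X}\to\mathcal{O}_F^{\dim Y}$; once that is in place, the estimate is a routine non-Archimedean volume computation, with the pole order $r$ (resp.\ the codimension data of $E$) playing the role of the exponent $r$ in the previous lemma.
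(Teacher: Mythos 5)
Your plan and the paper's proof reach the same conclusion, but the routes are genuinely different. The paper does \emph{not} argue by contradiction and does \emph{not} touch a resolution of singularities at this stage. Instead it exploits that $Z$ is $(LCI)$, hence Gorenstein, and invokes \cite[Corollary 3.15]{AA16}: for a Gorenstein variety it suffices to show that $\int_{X^{S}\cap Z(F)}f\left|\omega_{Z}\right|_{F}<\infty$, where $\omega_{Z}=\eta|_{Z^{sm}}$ is the restriction of the invertible section $\eta$ with $\eta|_{Z^{sm}}=\omega_{X}|_{X^{S}}/\varphi^{*}\omega_{Y}$. Finiteness is then proved ``positively'': one cuts off by Schwartz functions $h_{\epsilon}$ supported in $X^{S}(F)$, uses Proposition \ref{Prop 2.7} to identify $g_{\epsilon}(\varphi(x))=\int f h_{\epsilon}|\omega_{Z}|_{F}$ as the (continuous) density of $\varphi_{*}(fh_{\epsilon}|\omega_{X}|_{F})$, and notes that $g_{\epsilon}(\varphi(x))$ is a limit of ratios $\varphi_{*}(fh_{\epsilon}|\omega_{X}|_{F})(N_{i})/|\omega_{Y}|_{F}(N_{i})$, each dominated (since $0\le h_{\epsilon}\le 1$ and $f\ge 0$) by $\varphi_{*}(f|\omega_{X}|_{F})(N_{i})/|\omega_{Y}|_{F}(N_{i})<M$; then Lebesgue's monotone convergence theorem gives $\int f|\omega_{Z}|_{F}\le M<\infty$. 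Notice that this step uses only the $\mathcal{N}$-boundedness of $\varphi_{*}m$; the bounded-eccentricity hypothesis on $\mathcal{N}$ does its real work in the previous lemma (density of $Z\cap X^{S}$ in $Z$), not here.

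By contrast, your proposal proceeds by contradiction: extract a pole along an exceptional divisor, argue the pushforward density blows up like $\epsilon^{-c}$, and contradict $\mathcal{N}$-boundedness by sandwiching $N$ between $B_{\max}$ and $B_{\min}$. This could be made to work, but it re-derives by hand the technical content that the paper outsources to \cite[Corollary 3.15]{AA16}, and several of the steps you flag as ``routine'' actually need care: (i) showing that a pole along $E$ in a chart on $\widetilde{Z}$ forces a quantitative $\epsilon^{-c}$ lower bound for the measure of $\varphi^{-1}(B_{\epsilon})$ is effectively a local Igusa-type volume estimate, not a one-line computation, and the translation from ``fiber integral at $\varphi(x)$ is infinite'' to ``averaged density over shrinking balls is unbounded'' needs a lower-semicontinuity/Fatou argument spelled out; (ii) your plan only contradicts the failure of the form-extension condition, but rational singularity of an $(LCI)$ also requires normality, which you defer to ``a posteriori'' without an argument — the previous corollary gives Cohen--Macaulay and reduced, but not normal. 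The paper's route avoids both issues at once. If you want to keep your resolution-based argument, you must address (ii) explicitly and prove the estimate in (i); otherwise, it is cleaner to quote \cite[Corollary 3.15]{AA16} and run the monotone-convergence bound.
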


\begin{proof}
After further restricting to Zariski open neighborhoods of $x$ and
$\varphi(x)$, we may assume that $X$ and $Y$ are affine, with $\Omega_{X}^{top},\Omega_{Y}^{top}$
free. Fix invertible top forms $\omega_{X}\in\Omega_{X}^{top}[X]$,
$\omega_{Y}\in\Omega_{Y}^{top}[Y]$. We may find an invertible section
$\eta\in\Omega_{Z}^{top}[Z]$, such that $\eta|_{Z^{sm}}=\frac{\omega_{X}|_{X^{S}}}{\text{\ensuremath{\varphi}*}(\omega_{Y})}$
(for more details see the last part of the proof of \cite[Theorem 3.4]{AA16}).
We denote $\omega_{Z}:=\eta|_{Z^{sm}}$. 

Fix a finite extension $k'/k$. By assumption, there exists a non-Archimedean
local field $F\supseteq k'$ and a non-negative Schwartz measure $m$
on $X(F)$ that does not vanish at $x$, such that $\varphi_{*}(m)$
is bounded with respect to a local basis $\mathcal{N}$ of bounded
eccentricity. Write $m$ as $m=f\cdot\left|\omega_{X}\right|_{F}$.
Since $Z$ is an $(LCI)$, it is also Gorenstein, so by \cite[Corollary 3.15]{AA16},
it is enough to prove that $\int_{X^{S}\cap Z(F)}f\left|\omega_{Z}\right|_{F}<\infty$
for any such $k'/k$ and $F$.

Fix some embedding of $X$ into an affine space, and let $d$ be the
metric on $X(F)$ induced from the valuation metric. Define a function
$h_{\epsilon}:X(F)\rightarrow\mathbb{R}$ by $h_{\epsilon}(x')=1$
if $d(x',\left(X^{S}(F)\right)^{C})\geq\epsilon$ and $h_{\epsilon}(x')=0$
otherwise. Notice that $h_{\epsilon}$ is smooth, and $f\cdot h_{\epsilon}$
is a Schwartz function whose support lies in $X^{S}(F)$.

Using Proposition\textcolor{red}{{} }\ref{Prop 2.7}, we have $\varphi_{*}(f\cdot h_{\epsilon}\left|\omega_{X}\right|_{F})=g_{\epsilon}\left|\omega_{Y}\right|_{F}$,
where $g_{\epsilon}(\varphi(x))=\int_{X^{S}\cap Z(F)}f\cdot h_{\epsilon}\left|\omega_{Z}\right|_{F}$.
Note that $f$ is non-negative and $f\cdot h_{\epsilon}$ is monotonically
increasing when $\epsilon\rightarrow0$, and converges pointwise to
$f$. By Lebesgue's monotone convergence theorem we have:
\[
\int_{X^{S}\cap Z(F)}f\left|\omega_{Z}\right|_{F}=\underset{\epsilon\rightarrow0}{\mathrm{lim}}\int_{X^{S}\cap Z(F)}fh_{\epsilon}\left|\omega_{Z}\right|_{F}=\underset{\epsilon\rightarrow0}{\mathrm{lim}}g_{\epsilon}(\varphi(x)).
\]
It is left to show that $g_{\epsilon}(\varphi(x))$ is bounded in
$\epsilon$ and we are done. By our assumption, $\varphi_{*}(f\cdot\left|\omega_{X}\right|_{F})$
is $\mathcal{N}$-bounded, so there exists $\delta>0$ and $M>0$
such that for all $N_{i}\in\mathcal{N}_{\delta}$, 
\[
\underset{i}{\mathrm{sup}}\frac{\varphi_{*}(f\left|\omega_{X}\right|_{F})(N_{i})}{\left|\omega_{Y}\right|_{F}(N_{i})}<M.
\]
Note that we used the fact that for small enough $\delta$, $\left|\omega_{Y}\right|_{F}$
is just the normalized Haar measure up to homothety. Finally, we obtain:

\begin{eqnarray*}
\int_{X^{S}\cap Z(F)}f\left|\omega_{Z}\right|_{F} & = & \underset{\epsilon\rightarrow0}{\mathrm{lim}}g_{\epsilon}(\varphi(x))\\
 & = & \underset{\epsilon\rightarrow0}{\mathrm{lim}}\left(\underset{i\rightarrow\infty}{\mathrm{lim}}\frac{\varphi_{*}(f\cdot h_{\epsilon}\left|\omega_{X}\right|_{F})(N_{i})}{\left|\omega_{Y}\right|_{F}(N_{i})}\right)\\
 & \leq & \left(\underset{i}{\mathrm{sup}}\frac{\varphi_{*}(f\left|\omega_{X}\right|_{F})(N_{i})}{\left|\omega_{Y}\right|_{F}(N_{i})}\right)<M.
\end{eqnarray*}
\end{proof}

\section{Proof of the main theorem}

For any prime power $q=p^{r}$, we denote the unique unramified extension
of $\mathbb{Q}_{p}$ of degree $r$ by $\mathbb{Q}_{q}$, its ring
of integers by $\mathbb{Z}_{q}$, and the maximal ideal of $\mathbb{Z}_{q}$
by $\mathfrak{m}_{q}$. Recall that for a finite type $\mathbb{Z}$-scheme
$X$ and a finite ring $A$, we have defined $h_{X}(A):=\frac{\left|X(A)\right|}{\left|A\right|^{\mathrm{dim}X_{\mathbb{Q}}}}$.
In this section we prove the following slightly stronger version of
Theorem \ref{Main theorem}: 
\begin{thm}
\label{Main extended theorem} Let $X$ be a scheme of finite type
over $\mathbb{Z}$ such that $X_{\mathbb{Q}}$ is equi-dimensional
and a local complete intersection. Then the following conditions are
equivalent:

i) For any $n\in\mathbb{N}$, $\underset{p\rightarrow\infty}{\mathrm{lim}}h_{X}(\mathbb{Z}/p^{n}\mathbb{Z})=1$.

ii) There is a finite set $S$ of prime numbers and a constant $C$,
such that $\left|h_{X}(\mathbb{Z}/p^{n}\mathbb{Z})-1\right|<Cp^{-1/2}$
for any prime $p\notin S$ and any $n\in\mathbb{N}$.

iii) $X_{\mathbb{\overline{Q}}}$ is reduced, irreducible and has
rational singularities. 

iv) $X_{\overline{\mathbb{Q}}}$ is irreducible and there exists $C>0$
such that $h_{X}(\mathbb{Z}/p^{n}\mathbb{Z})<C$ for any prime $p$
and $n\in\mathbb{N}$.

iv') $X_{\overline{\mathbb{Q}}}$ is irreducible and for any prime
power $q$, the sequence $n\mapsto h_{X}(\mathbb{Z}_{q}/\mathfrak{m}_{q}^{n})$
is bounded.

v) $X_{\overline{\mathbb{Q}}}$ is irreducible and there exists a
finite set $S$ of primes, such that for any $p\notin S$ , the sequence
$n\mapsto h_{X}(\mathbb{Z}/p^{n}\mathbb{Z})$ is bounded.

Moreover, $iii),iv),iv')$ and $v)$ are equivalent without demanding
that $X_{\overline{\mathbb{Q}}}$ is irreducible. 
\end{thm}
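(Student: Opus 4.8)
The plan is to prove Theorem \ref{Main extended theorem} by establishing the cycle of implications among the conditions, leaning on Theorem \ref{Main auxilary theorem} for the analytically-flavored step and on a separate set of elementary counting arguments for the ``almost all $p$ $\Rightarrow$ all $p$'' step. First, I would observe that the equivalence $i) \Leftrightarrow ii) \Leftrightarrow iii)$ is exactly Theorem \ref{Raminir theorem}, so the new content is the incorporation of $iv), iv')$ and $v)$. The trivial implications are $iv) \Rightarrow iv')$ (taking $A = \mathbb{Z}_q/\mathfrak{m}_q^n$, whose cardinality is $p^{rn}$, and noting $X(A)$ is counted the same way), $iv) \Rightarrow v)$ (restrict to $S = \emptyset$ and fixed $p$), and $ii) \Rightarrow iv)$ (the bound $|h_X - 1| < Cp^{-1/2}$ for $p \notin S$ gives boundedness for those $p$; for the finitely many $p \in S$ one still needs boundedness in $n$, which is where the ``all $p$'' argument must be invoked — so actually $ii) \Rightarrow iv)$ already requires the second difficulty). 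I would therefore structure the proof as: $iii) \Rightarrow ii)$ (old), $ii) \Rightarrow v)$ and $ii) \Rightarrow iv'$) (easy, modulo handling $p \in S$), $v) \Rightarrow iii)$ and $iv') \Rightarrow iii)$ (the analytic step), and finally the ``bounded for almost all $p$ implies bounded for all $p$'' upgrade which is needed to close $iii) \Rightarrow iv)$ and to get the $p \in S$ cases.

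For the implication $v) \Rightarrow iii)$ (and similarly $iv') \Rightarrow iii)$), the argument runs as sketched in the introduction. I would first use the Lang--Weil estimates together with the hypothesis that $h_X$ is bounded in $n$ to force $X_{\overline{\mathbb{Q}}}$ to be reduced of pure dimension $\dim X_{\mathbb{Q}}$ — indeed if $X_{\mathbb{Q}}$ were non-reduced, $h_X(\mathbb{Z}/p^n\mathbb{Z})$ would grow like a positive power of $p^n$. Next, by Proposition \ref{Prop 2.3} and Proposition \ref{prop 2.4}, after passing to an affine open cover I may assume $X_{\mathbb{Q}} = \varphi^{-1}(0)$ for a morphism $\varphi: \mathbb{A}^M_{\mathbb{Q}} \to \mathbb{A}^N_{\mathbb{Q}}$ flat over $0$, and the statement ``$X_{\overline{\mathbb{Q}}}$ reduced with rational singularities'' becomes ``$\varphi$ is (FRS) over $0$''. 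The key point is the standard dictionary between point counts over $\mathbb{Z}/p^n\mathbb{Z}$ and push-forward measures: $\frac{|\varphi^{-1}(0)(\mathbb{Z}/p^n\mathbb{Z})|}{p^{n(M-N)}} = \frac{\varphi_*(\mu)(p^n\mathbb{Z}_p^N)}{p^{-nN}}$ where $\mu$ is normalized Haar on $\mathbb{Z}_p^M$. Boundedness of the left side in $n$ (for one prime $p$ with the right splitting behavior, which one extracts from condition $v)$ after base change to finite extensions $k'/k$ — using Lemma \ref{Lemma 4.3} to embed number fields into $\mathbb{Q}_p$) is precisely the statement that $\varphi_*(\mu)$ is bounded with respect to the local basis $\{p^n\mathbb{Z}_p^N\}_n$. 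Since this basis has bounded eccentricity (the balls $p^n\mathbb{Z}_p^N$ are themselves balls, so $B_{\min} = B_{\max}$, eccentricity $1$), Theorem \ref{Main auxilary theorem}, condition c'), applies and yields that $\varphi$ is (FRS) at every point of the fiber, hence $X_{\overline{\mathbb{Q}}}$ is reduced with rational singularities. The irreducibility in $iii)$ then matches the irreducibility assumed in $v)$; and the ``moreover'' clause follows because the argument never genuinely used irreducibility — it produced reducedness and rational singularities directly.

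The remaining and, to my mind, \textbf{most delicate} part is the upgrade: if $h_X(\mathbb{Z}/p^n\mathbb{Z})$ is bounded (uniformly in $n$) for all primes $p$ outside a finite set $S$, then it is bounded for all $p$ and in fact uniformly in both $p$ and $n$ — i.e. $iii) \Rightarrow iv)$ and the $p\in S$ cases of $ii)\Rightarrow iv)$. I would first treat the case $X$ a $(CIA)$ over $\mathbb{Z}$ directly (Proposition \ref{Proposition 4.5}): writing $X = \varphi^{-1}(0)$ via Proposition \ref{prop 2.4}, one can use a Greenberg-type / smooth-point-fibration argument together with the fact that (FRS)-ness of $\varphi_{\mathbb{Q}}$ over $0$, already known from $iii)$, controls the density of $\varphi_*$; the point is that for $(CIA)$ schemes the count $|X(\mathbb{Z}/p^n\mathbb{Z})|$ has a uniform description (e.g. via a motivic / $p$-adic integral that is independent of $p$ for the leading behavior), so boundedness at almost all $p$ together with the uniform shape forces boundedness at all $p$. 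Then, for $X_{\mathbb{Q}}$ a $(CIA)$, I would construct $\widehat{X}$ a $(CIA)$ over $\mathbb{Z}$ with $\psi: X \to \widehat{X}$ inducing an isomorphism over $\mathbb{Q}$ (Lemma \ref{Lemma 4.6}), and prove $|X(\mathbb{Z}/p^n\mathbb{Z})| \le p^{Nc}|\widehat{X}(\mathbb{Z}/p^n\mathbb{Z})|$ for suitable constants $c, N$ by analyzing the reduction behavior of $\psi$ at the bad primes (Lemma \ref{Lemma 4.7}) — this transfers boundedness from $\widehat{X}$, to which the previous case applies, to $X$. Finally, for general $X_{\mathbb{Q}}$ an $(LCI)$, I would take an affine open cover $\{U_i\}$ of $X_{\mathbb{Q}}$ by $(CIA)$'s, spread out to $\mathbb{Z}$-schemes $\widetilde{U}_i$ with $(\widetilde{U}_i)_{\mathbb{Q}} \simeq U_i$, arrange the inequality $h_X(\mathbb{Z}/p^n\mathbb{Z}) \le \sum_i h_{\widetilde U_i}(\mathbb{Z}/p^n\mathbb{Z})$ by a covering argument at the level of $\mathbb{Z}/p^n\mathbb{Z}$-points (each point of $X$ lifts the reduction mod $p$, which lands in some $U_i$), and conclude since each summand is bounded by the $(CIA)$ case. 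The main obstacle I anticipate is making the inequality $|X(\mathbb{Z}/p^n\mathbb{Z})| \le p^{Nc}|\widehat X(\mathbb{Z}/p^n\mathbb{Z})|$ genuinely uniform in $p$ — one must control how badly $\psi$ fails to be an isomorphism at the finitely many primes dividing the relevant resultants/discriminants, with a bound on the ``fiber size'' of $\psi$ on $\mathbb{Z}/p^n\mathbb{Z}$-points that does not blow up with $p$; this is where the explicit nature of the construction of $\widehat X$ and $\widetilde U_i$ is essential.
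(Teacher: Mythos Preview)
Your overall architecture matches the paper's: the equivalence $i)\Leftrightarrow ii)\Leftrightarrow iii)$ is cited, $v)\Rightarrow iii)$ goes via reduction to a $(CIA)$ presentation and Theorem \ref{Main auxilary theorem} applied to the basis $\{p^n\mathbb{Z}_p^N\}$, and $iii)\Rightarrow iv')$ is done in the three cases $(CIA)$, $X_{\mathbb{Q}}$ a $(CIA)$, $X_{\mathbb{Q}}$ an $(LCI)$ with the constructions of Lemmas \ref{Lemma 4.6} and \ref{Lemma 4.7} and the covering inequality. One small slip: $iv)\Rightarrow iv')$ is \emph{not} trivial --- the rings $\mathbb{Z}_q/\mathfrak{m}_q^n$ for $q=p^r$, $r>1$, are not of the form $\mathbb{Z}/p^m\mathbb{Z}$, so knowing $h_X(\mathbb{Z}/p^n\mathbb{Z})<C$ says nothing directly about $h_X(\mathbb{Z}_q/\mathfrak{m}_q^n)$. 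In the paper the direction that is easy is $iv')\Rightarrow v)$ (take $q=p$), and $iv)$ is recovered from $iv')$ only after passing through $iii)\Rightarrow ii)$ to get uniformity outside a finite $S$ and then invoking $iv')$ for the finitely many bad primes.

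The genuine gap is your treatment of the base case, Proposition \ref{Proposition 4.5}. You frame it as a transfer principle (``boundedness at almost all $p$ together with the uniform shape forces boundedness at all $p$'') via some Greenberg/motivic uniformity. That is not the mechanism, and I do not see how to make such a transfer argument work without essentially reproving the result. The paper's argument is \emph{direct} for each fixed $q$ and uses two ingredients you do not mention: Elkik's theorem that the $(FRS)$ locus of $\varphi$ is Zariski open, and compactness of $\mathbb{Z}_q^M$. From $iii)$ one only knows that $\varphi_{\mathbb{Q}_q}$ is $(FRS)$ at points of the fiber $\varphi^{-1}(0)(\mathbb{Q}_q)$, not on all of $\mathbb{Z}_q^M$, so Theorem \ref{Auxilery Raminir theorem}(b) does not apply to $\mu$ itself. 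The fix is: by Elkik plus compactness there is an $R$ such that $\varphi$ is $(FRS)$ at every point of $U_R:=\varphi_{\mathbb{Z}_q}^{-1}(p^R\mathbb{Z}_q^N)$; then $(\varphi_{\mathbb{Z}_q})_*(\mu|_{U_R})$ has continuous density near $0$, and for $n>R$ the quantity $h_X(\mathbb{Z}_q/\mathfrak{m}_q^n)$ only sees $\mu|_{U_R}$. Without this openness-plus-compactness step your sentence ``$(FRS)$-ness of $\varphi_{\mathbb{Q}}$ over $0$ controls the density of $\varphi_*$'' is not justified, and the rest of the reduction (Lemmas \ref{Lemma 4.6}, \ref{Lemma 4.7}, the covering) has nothing to stand on.
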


We divide the proof of the theorem to two main parts that corresponds
to the implication $v)\Longrightarrow iii)$ (Section \ref{subsec:Boundedness-implies-rational})
and the implication $iii)\Longrightarrow iv')$ (Section \ref{subsec:Rational-singularities-implies}).
The equivalence of condition $i),ii)$ and $iii)$ is proved in \cite[Theorem 3.0.3]{AA}
(see Theorem \ref{Raminir theorem}). The implications $iv)\Longrightarrow v)$
and $iv')\Longrightarrow v)$ are trivial and $iv')\Longrightarrow iv)$
follows by applying $q=p$ to $iv)$ and from $ii)$. For the proof
we need:
\begin{lem}
\label{Lemma 4.2}(e.g \cite[Proposition 3.2.1]{AA}) Let $X=U_{1}\cup U_{2}$
be an open cover of a scheme. Then for any finite local ring $A$,
we have:

1) $\left|X(A)\right|=\left|U_{1}(A)\right|+\left|U_{2}(A)\right|-\left|U_{1}\cap U_{2}(A)\right|$. 

2) $\left|X(A)\right|\geq\left|U_{1}(A)\right|$.
\end{lem}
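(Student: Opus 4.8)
The plan is to reduce both statements to a single topological observation about the spectrum of a local ring. Since $A$ is local with maximal ideal $\mathfrak{m}$, the point $[\mathfrak{m}]$ is the unique closed point of $\mathrm{Spec}(A)$, and every nonempty closed subset of $\mathrm{Spec}(A)$ (being of the form $V(I)$ for a proper ideal $I\subseteq\mathfrak{m}$) contains it; hence the only open subset of $\mathrm{Spec}(A)$ containing $[\mathfrak{m}]$ is $\mathrm{Spec}(A)$ itself. Now given $f\in X(A)$, i.e. a morphism $f:\mathrm{Spec}(A)\to X$, set $x_0=f([\mathfrak{m}])$. As $\{U_1,U_2\}$ covers $X$, we have $x_0\in U_i$ for some $i$; then $f^{-1}(U_i)$ is an open subset of $\mathrm{Spec}(A)$ containing $[\mathfrak{m}]$, so $f^{-1}(U_i)=\mathrm{Spec}(A)$, and therefore $f$ factors through $U_i$ (uniquely, since open immersions are monomorphisms).

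First I would record the consequences for the point sets. Identifying each $U_i(A)$ with its image in $X(A)$ under the injection induced by the open immersion $U_i\hookrightarrow X$, the previous paragraph says exactly that $X(A)=U_1(A)\cup U_2(A)$ as subsets of $X(A)$. Moreover, a morphism $\mathrm{Spec}(A)\to X$ factors through both $U_1$ and $U_2$ if and only if it factors through the open subscheme $U_1\cap U_2=U_1\times_X U_2$ (universal property of the fiber product, again using that open immersions are monomorphisms); hence $U_1(A)\cap U_2(A)=(U_1\cap U_2)(A)$ inside $X(A)$. Part (1) is then the inclusion–exclusion identity $|S_1\cup S_2|=|S_1|+|S_2|-|S_1\cap S_2|$ applied to the finite sets $S_i=U_i(A)$ (finite because $A$ is finite). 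Part (2) is immediate from the injection $U_1(A)\hookrightarrow X(A)$; alternatively it follows from (1) together with $|U_2(A)|\ge|(U_1\cap U_2)(A)|$.

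There is no real obstacle here; the only point that requires care is the factorization claim of the first paragraph, and it is precisely there that the hypothesis that $A$ is \emph{local} enters. For a non-local $A$ whose spectrum is disconnected, an $A$-point can meet $U_1$ on one connected component and $U_2$ on another and factor through neither, so the identity $X(A)=U_1(A)\cup U_2(A)$ fails in general. Everything else is bookkeeping with finite sets.
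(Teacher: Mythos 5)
Your proof is correct and is the standard argument. The paper does not actually supply a proof of this lemma—it merely cites \cite[Proposition 3.2.1]{AA}—and the argument given there is the same one you present: the key observation that $\mathrm{Spec}(A)$ of a local ring has a unique closed point (the image of $\mathfrak{m}$), so that the only open subset containing it is $\mathrm{Spec}(A)$ itself, whence any $A$-point of $X$ factors through some $U_i$; the set-theoretic identities $X(A)=U_1(A)\cup U_2(A)$ and $(U_1\cap U_2)(A)=U_1(A)\cap U_2(A)$ then follow, and inclusion--exclusion on the finite sets gives (1), while the injectivity of $U_1(A)\hookrightarrow X(A)$ gives (2). (As a small remark, since $A$ is a \emph{finite} local ring it is Artinian local, so $\mathrm{Spec}(A)$ is in fact a single point, which makes the topological step immediate; but your more general formulation, valid for any local ring, is perfectly fine and matches the cited source.)
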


The following lemma is a consequence of Chebotarev's density theorem
and Hensel's lemma.
\begin{lem}
\label{Lemma 4.3}(\cite[Lemma 3.15]{GH}) Let $X$ be a finite type
$\mathbb{Z}$-scheme and let $x\in X(\overline{\mathbb{Q}})$. Then:

1) There exists a finite extension $k$ of $\mathbb{Q}$, such that
$x\in X(k)$.

2) For any finite extension $k/\mathbb{Q}$ as in 1), there exist
infinitely many primes $p$ with $i_{p}:k\hookrightarrow\mathbb{Q}_{p}$
such that $i_{p*}(x)\in X(\mathbb{Z}_{p})$, where $i_{p*}:X(k)\hookrightarrow X(\mathbb{Q}_{p})$.
\end{lem}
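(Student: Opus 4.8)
The plan is to reduce the statement to an explicit affine computation in coordinates and then combine the Chebotarev density theorem with Hensel's lemma. Since $X$ is of finite type over $\mathbb{Z}$, it is covered by finitely many affine opens $\mathrm{Spec}\,A$ with $A$ a finitely generated $\mathbb{Z}$-algebra, and the point $x$ factors through one of them; write $A=\mathbb{Z}[t_1,\dots,t_m]/I$ (the ideal $I$ being finitely generated by the Hilbert basis theorem). Then a point of $X(\overline{\mathbb{Q}})$ landing in this chart is the same as a tuple $\underline{a}=(a_1,\dots,a_m)\in\overline{\mathbb{Q}}^{\,m}$ annihilating every $f\in I$. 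For part $1)$, the subfield $k:=\mathbb{Q}(a_1,\dots,a_m)\subseteq\overline{\mathbb{Q}}$ is finite over $\mathbb{Q}$ because each $a_i$ is algebraic, and the ring homomorphism $A\to\overline{\mathbb{Q}}$ factors through $k$, so $x\in X(k)$, with the inclusion $k\hookrightarrow\overline{\mathbb{Q}}$ recovering the original point.

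For part $2)$, fix such a $k$, view $\underline{a}$ as a tuple in $k^m$, and set $B:=\mathbb{Z}[a_1,\dots,a_m]\subseteq k$, a finitely generated $\mathbb{Z}$-subalgebra through which $x$ factors as $\mathrm{Spec}\,k\to\mathrm{Spec}\,B\to\mathrm{Spec}\,A\hookrightarrow X$. Choose a primitive element $\theta$ for $k/\mathbb{Q}$ which is an algebraic integer, with monic minimal polynomial $g\in\mathbb{Z}[T]$ of degree $d=[k:\mathbb{Q}]$, and write each $a_i=h_i(\theta)$ with $h_i\in\mathbb{Q}[T]$. The idea is first to produce infinitely many primes $p$ admitting an embedding $i_p:k\hookrightarrow\mathbb{Q}_p$, and then, after discarding finitely many more, to arrange that such an embedding carries all the $a_i$ into $\mathbb{Z}_p$.

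For the first task, apply the Chebotarev density theorem to the Galois closure $L/\mathbb{Q}$ of $k/\mathbb{Q}$: the set of primes $p$ splitting completely in $L$ has positive density, in particular is infinite, and any such $p$ splits completely in $k$. For all but finitely many of these, $p\nmid\mathrm{disc}(g)$, so $g\bmod p$ is a product of distinct linear factors over $\mathbb{F}_p$, and Hensel's lemma lifts each simple root to a root of $g$ in $\mathbb{Z}_p$; each such root defines an embedding $i_p:k=\mathbb{Q}[T]/(g)\hookrightarrow\mathbb{Q}_p$ with $i_p(\theta)\in\mathbb{Z}_p$. For the second task, let $S$ be the finite set of primes dividing $\mathrm{disc}(g)$, the index $[\mathcal O_k:\mathbb{Z}[\theta]]$, or a denominator of a coefficient of some $h_i$; for $p\notin S$ as above and any such $i_p$, we get $i_p(a_i)=h_i(i_p(\theta))\in\mathbb{Z}_p$ for all $i$, hence $i_p(B)\subseteq\mathbb{Z}_p$. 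Composing $A\twoheadrightarrow B\xrightarrow{\,i_p|_B\,}\mathbb{Z}_p$ gives a morphism $\mathrm{Spec}\,\mathbb{Z}_p\to\mathrm{Spec}\,A\hookrightarrow X$, i.e. a point of $X(\mathbb{Z}_p)$ whose generic fiber is exactly $i_{p*}(x)$ (the relations $f(\underline a)=0$ pass through every ring map, so nothing is obstructed). Since infinitely many $p$ qualify, part $2)$ follows.

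I expect the only real work to be bookkeeping: choosing the affine chart, the primitive element, and the finite exceptional set $S$ compatibly so that for \emph{every} prime outside $S$ splitting completely in $L$ one genuinely obtains both an embedding $k\hookrightarrow\mathbb{Q}_p$ and integrality of $x$'s coordinates; the analytic and arithmetic inputs themselves (Chebotarev, Hensel, finite generation) are entirely standard.
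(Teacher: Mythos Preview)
The paper does not actually prove this lemma: it is stated with a citation to \cite[Lemma 3.15]{GH} and prefaced only by the remark that it ``is a consequence of Chebotarev's density theorem and Hensel's lemma.'' Your proposal supplies exactly those two ingredients in the expected way---Chebotarev applied to the Galois closure to produce infinitely many completely split primes, Hensel to lift a simple root of the minimal polynomial into $\mathbb{Z}_p$, and a finite exclusion set to guarantee integrality of the coordinates---so your argument is correct and matches the approach the paper gestures at.
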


\subsection{\label{subsec:Boundedness-implies-rational} Boundedness implies
rational singularities }
\begin{thm}
\label{Theorem 4.4} Let $X$ be a finite type $\mathbb{Z}$-scheme
such that $X_{\mathbb{Q}}$ is a local complete intersection. Assume
that there exists a finite set of primes $S$, such that for any $p\notin S$,
the sequence $n\mapsto h_{X}(\mathbb{Z}/p^{n}\mathbb{Z})$ is bounded.
Then $X_{\mathbb{\overline{Q}}}$ is reduced and has rational singularities. 
\end{thm}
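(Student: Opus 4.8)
The plan is to reduce the global statement to the local analytic criterion of Theorem~\ref{Main auxilary theorem}. First I would reduce to the case where $X_{\mathbb{Q}}$ is a complete intersection in an affine space $(CIA)$: cover $X_{\mathbb{Q}}$ by affine open $(CIA)$ pieces $U_i$, spread out to finite type $\mathbb{Z}$-schemes $\widetilde U_i$ with $(\widetilde U_i)_{\mathbb{Q}}\simeq U_i$, and use Lemma~\ref{Lemma 4.2}(2) to see that boundedness of $h_X(\mathbb{Z}/p^n\mathbb{Z})$ forces boundedness of $h_{\widetilde U_i}(\mathbb{Z}/p^n\mathbb{Z})$ for almost all $p$ (for $p$ outside $S$ together with the finitely many primes where the spreading out misbehaves). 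Since having rational singularities and being reduced are local properties on $X_{\mathbb{Q}}$, it suffices to treat each $(CIA)$ piece; so assume $X_{\mathbb{Q}}$ itself is a $(CIA)$.

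Next, using Proposition~\ref{prop 2.4} (or Proposition~\ref{Prop 2.3} over $\mathbb{Q}$), write $X_{\mathbb{Q}}=\varphi^{-1}(0)$ for a morphism $\varphi:\mathbb{A}^M_{\mathbb{Q}}\to\mathbb{A}^N_{\mathbb{Q}}$ that is flat above $0$. The goal becomes to show $\varphi$ is $(FRS)$ above $0$, i.e.\ $(FRS)$ at every $x\in(\varphi^{-1}(0))(\overline{\mathbb{Q}})$, which by the lemma after Proposition~2.1 and the local characterization of rational singularities is equivalent to $X_{\overline{\mathbb{Q}}}$ being reduced with rational singularities. I would then translate the hypothesis into point counts over $\mathbb{Z}_p$: for $p\notin S$, $h_X(\mathbb{Z}/p^n\mathbb{Z})$ counts points of $\varphi^{-1}(0)$ mod $p^n$, and (after a standard manipulation relating $|X(\mathbb{Z}/p^n\mathbb{Z})|$ to the measure $\varphi_*(\mu)(p^n\mathbb{Z}_p^N)$ of a small ball, where $\mu$ is normalized Haar measure on $\mathbb{Z}_p^M$) boundedness of the sequence $n\mapsto h_X(\mathbb{Z}/p^n\mathbb{Z})$ translates to boundedness of $n\mapsto \varphi_*(\mu)(p^n\mathbb{Z}_p^N)/p^{-nN}$. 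Since a single prime $p$ suffices for a fixed residue field (and, via Lemma~\ref{Lemma 4.3}, every $x\in(\varphi^{-1}(0))(k)$ is a $\mathbb{Z}_p$-point for infinitely many $p$, hence for some $p\notin S$ with $k'\hookrightarrow\mathbb{Q}_p$), one deduces Condition~\ref{cond:For-any-finite}.

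Finally, Condition~\ref{cond:For-any-finite} says precisely that $\varphi_*(\mu)$ is bounded with respect to the local basis $\{p^n\mathbb{Z}_p^N\}_n$ at $0$. This basis has bounded eccentricity: each $p^n\mathbb{Z}_p^N$ is itself a ball, so $B_{\min}=B_{\max}$ and the eccentricity ratio is $1$. The measure $\mu=1_{\mathbb{Z}_p^M}\cdot\lambda_M$ is a non-negative Schwartz measure that does not vanish at any $x\in\mathbb{Z}_p^M$, in particular not at the chosen point $x$. Hence condition $c')$ of Theorem~\ref{Main auxilary theorem} holds for $\varphi$ at $x$ (with $k$ any finite field of definition of $x$), so $\varphi$ is $(FRS)$ at $x$; as $x$ was arbitrary, $\varphi$ is $(FRS)$ above $0$, which gives the conclusion on $X_{\overline{\mathbb{Q}}}$.

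The main obstacle I expect is the bookkeeping in the two reduction steps and in the translation between point counts and $p$-adic volumes: one must control how the finite exceptional set $S$ grows under spreading out the affine cover (Lemma~\ref{Lemma 4.6}-type constructions) and under passing from $X$ to the $(CIA)$ model, and one must verify carefully that for a given number field $k'$ there is genuinely a prime $p\notin S$ realizing both the embedding $k'\hookrightarrow\mathbb{Q}_p$ and the integrality $x\in(\varphi^{-1}(0))(\mathbb{Z}_p)$ — this is exactly where Lemma~\ref{Lemma 4.3} and the fact that $S$ is finite are used. The genuinely new analytic input is entirely packaged in Theorem~\ref{Main auxilary theorem}; once Condition~\ref{cond:For-any-finite} is in hand, the passage to $(FRS)$ is immediate because $\{p^n\mathbb{Z}_p^N\}_n$ is a local basis of bounded eccentricity (indeed of eccentricity~$1$).
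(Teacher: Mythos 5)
Your proposal is correct and follows essentially the same route as the paper's own proof: reduce to the $(CIA)$ case via an affine cover and Lemma~\ref{Lemma 4.2}, present $X_{\mathbb{Q}}$ as $\varphi^{-1}(0)$ for $\varphi$ flat above $0$, use Lemma~\ref{Lemma 4.3} to pick a suitable prime $p\notin S$ (together with the finitely many primes needed for spreading out) realizing both $k'\hookrightarrow\mathbb{Q}_p$ and integrality of the point, translate $h_X(\mathbb{Z}/p^n\mathbb{Z})$ into the ratio $\varphi_*(\mu)(p^n\mathbb{Z}_p^N)/p^{-nN}$, and invoke Theorem~\ref{Main auxilary theorem} with the bounded-eccentricity basis $\{p^n\mathbb{Z}_p^N\}_n$. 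The only cosmetic difference is that the paper, in the reduction step, simply spreads each $(CIA)$ piece out over $\mathbb{Z}[S_i^{-1}]$ and compares point counts directly for $p\notin S_i$, whereas you gesture at Lemma~\ref{Lemma 4.6}-style constructions; those are actually needed only for the converse implication $iii)\Rightarrow iv')$, and the simpler spread-out suffices here exactly as you anticipate.
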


\begin{proof}
\textbf{Step 1: }Reduction to the case when $X_{\mathbb{Q}}$ is a
complete intersection in an affine space $(CIA)$. 

Let $\bigcup_{i=1}^{l}\overline{X}_{i}$ be an affine cover of $X_{\mathbb{Q}}$,
with each $\overline{X}_{i}$ a $(CIA)$. For any $i$, there is a
finite set $S_{i}$ of primes, such that $\overline{X}_{i}$ is defined
over $\mathbb{Z}[S_{i}^{-1}]$ and thus it has a finite type $\mathbb{Z}$-model,
denoted $X_{i}$. By Lemma \ref{Lemma 4.2}, for each $p\notin S_{i}$
we have $\left|X_{i}(\mathbb{Z}/p^{n}\mathbb{Z})\right|\leq\left|X(\mathbb{Z}/p^{n}\mathbb{Z})\right|$
and thus $n\mapsto h_{X_{i}}(\mathbb{Z}/p^{n}\mathbb{Z})$ is bounded
for each $p\notin S_{i}\cup S$. By our assumption, this implies that
each $\left(X_{i}\right)_{\mathbb{\overline{Q}}}$ is reduced and
has rational singularities, and thus also $X_{\mathbb{\overline{Q}}}$. 

\textbf{Step 2:} Proof for the case when $X_{\mathbb{Q}}$ is a $(CIA)$.

By Proposition \ref{Prop 2.3} we have an inclusion $\overline{\psi}:X_{\mathbb{Q}}\hookrightarrow\mathbb{A}_{\mathbb{Q}}^{M}$
and a morphism $\overline{\varphi}:\mathbb{A}_{\mathbb{Q}}^{M}\longrightarrow\mathbb{A}_{\mathbb{Q}}^{N}$,
flat over $0$, such that $\overline{\psi}:X_{\mathbb{Q}}\simeq\overline{\varphi}^{-1}(0)$.
As in Step 1, there exists a set $S_{1}$ of primes, and morphisms
$\varphi:\mathbb{A}_{\mathbb{Z}[S_{1}^{-1}]}^{M}\longrightarrow\mathbb{A}_{\mathbb{Z}[S_{1}^{-1}]}^{N}$
and $\psi:X_{\mathbb{Z}[S_{1}^{-1}]}\hookrightarrow\mathbb{A}_{\mathbb{Z}[S_{1}^{-1}]}^{M}$,
such that $\varphi_{\mathbb{Q}}=\overline{\varphi}$, $\psi_{\mathbb{Q}}=\overline{\psi}$,
$\varphi$ is flat over $0$, and $\psi:X_{\mathbb{Z}[S_{1}^{-1}]}\simeq\varphi^{-1}(0)$. 

It is enough to prove that for any finite extension $k/\mathbb{Q}$
and any $y\in\left(\varphi^{-1}(0)\right)(k)$, the map $\varphi_{k}:\mathbb{A}_{k}^{M}\longrightarrow\mathbb{A}_{k}^{N}$
is $(FRS)$ at $y$. 

Fix $y\in\left(\varphi^{-1}(0)\right)(k)$ and let $k'$ be a finite
extension of $k$. By Lemma \ref{Lemma 4.3}, there exists an infinite
set of primes $T$ such that for any $p\in T$ we have an inclusion
$i_{p}:k'\hookrightarrow\mathbb{Q}_{p}$ and $i_{p*}(y)\in\mathbb{Z}_{p}^{M}$.
Choose $p\in T\backslash(S\cup S_{1})$ and consider the local basis
of balls $\{p^{n}\mathbb{Z}_{p}^{N}\}_{n}$ at $0$, which clearly
has bounded eccentricity. Let $\mu$ be the normalized Haar measure
on $\mathbb{Z}_{p}^{M}$ and notice that $\mu$ does not vanish at
$y$. By Theorem \ref{Main auxilary theorem}, in order to prove that
$\varphi_{k}:\mathbb{A}_{k}^{M}\longrightarrow\mathbb{A}_{k}^{N}$
is $(FRS)$ at $y$ it is enough to show that the sequence 
\[
n\mapsto\frac{\left(\left(\varphi_{\mathbb{Z}_{p}}\right)_{*}\mu\right)(p^{n}\mathbb{Z}_{p}^{N})}{\lambda(p^{n}\mathbb{Z}_{p}^{N})}
\]
is bounded (for any $k'$ and $p$ as above), where $\lambda$ is
the normalized Haar measure on $\mathbb{Q}_{p}^{N}$. Consider $\pi_{N,n}:\mathbb{Z}_{p}^{N}\longrightarrow(\mathbb{Z}/p^{n}\mathbb{Z})^{N}$
and notice that the following diagram is commutative: 
\[
\begin{array}{ccc}
\mathbb{Z}_{p}^{M} & \overset{\varphi_{\mathbb{Z}_{p}}}{\longrightarrow} & \mathbb{Z}_{p}^{N}\\
\downarrow\pi_{M,n} & \, & \downarrow\pi_{N,n}\\
(\mathbb{Z}/p^{n}\mathbb{Z})^{M} & \overset{\varphi_{\mathbb{Z}/p^{n}}}{\longrightarrow} & (\mathbb{Z}/p^{n}\mathbb{Z})^{N}.
\end{array}
\]
Therefore we have 
\begin{align*}
\mu(\varphi_{\mathbb{Z}_{p}}^{-1}(p^{n}\mathbb{Z}_{p}^{N})) & =\mu(\varphi_{\mathbb{Z}_{p}}^{-1}\circ\pi_{N,n}^{-1}(0))=\mu(\pi_{M,n}^{-1}\circ\varphi_{\mathbb{Z}/p^{n}}^{-1}(0))\\
 & =p^{-Mn}\cdot\left|\varphi_{\mathbb{Z}/p^{n}}^{-1}(0)\right|=p^{-Mn}\cdot\left|X(\mathbb{Z}/p^{n}\mathbb{Z})\right|,
\end{align*}

and hence
\[
\frac{\left(\left(\varphi_{\mathbb{Z}_{p}}\right)_{*}\mu\right)(p^{n}\mathbb{Z}_{p}^{N})}{\lambda(p^{n}\mathbb{Z}_{p}^{N})}=\frac{\left|X(\mathbb{Z}/p^{n}\mathbb{Z})\right|}{p^{(M-N)\cdot n}}=h_{X}(\mathbb{Z}/p^{n}\mathbb{Z})
\]
 is bounded and we are done. 
\end{proof}

\subsection{\label{subsec:Rational-singularities-implies} Rational singularities
implies boundedness}

In the last section we proved the implication $v)\Longrightarrow iii)$
of Theorem \ref{Main extended theorem}. In this subsection we prove
that $iii)$ implies $iv')$. We divide the proof to three cases: 
\begin{enumerate}
\item $X$ is a $(CIA)$.
\item $X_{\mathbb{Q}}$ is a $(CIA)$.
\item $X_{\mathbb{Q}}$ is an $(LCI)$.
\end{enumerate}

\subsubsection{Proof for the case that $X$ is a $(CIA)$}

As stated in \cite[Remark 3.04]{AA}, this case can be proved using
arguments similar to those in the proof of \cite[Theorem 3.0.3]{AA},
although the details were omitted. For completeness, we present a
proof.
\begin{prop}
\label{Proposition 4.5} If $X$ is a $(CIA)$, then $iii)\Longrightarrow iv')$.
\end{prop}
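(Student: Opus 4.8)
The plan is to reduce the statement to an estimate on a $p$-adic integral and then invoke the $(FRS)$ criterion in the reverse direction from what was used in Section~\ref{subsec:Boundedness-implies-rational}. By Proposition~\ref{prop 2.4}, since $X$ is a $(CIA)$ we obtain $\mathbb{Z}$-morphisms $\psi:X\hookrightarrow\mathbb{A}_{\mathbb{Z}}^{M}$ and $\varphi:\mathbb{A}_{\mathbb{Z}}^{M}\to\mathbb{A}_{\mathbb{Z}}^{N}$ with $\varphi$ flat over $0$ and $\psi:X\simeq\varphi^{-1}(0)$ as $\mathbb{Z}$-schemes. Now condition $iii)$ says $X_{\overline{\mathbb{Q}}}$ is reduced, irreducible and has rational singularities; translating through $\psi_{\mathbb{Q}}$, this is exactly the assertion that $\varphi_{\mathbb{Q}}:\mathbb{A}_{\mathbb{Q}}^{M}\to\mathbb{A}_{\mathbb{Q}}^{N}$ is $(FRS)$ over $0$, i.e.\ $(FRS)$ at every point of $(\varphi^{-1}(0))(\overline{\mathbb{Q}})$ (this is where we use that $X_{\mathbb{Q}}$ is a complete intersection of the expected dimension $M-N$, so $\varphi_{\mathbb{Q}}$ is flat at these points, and that the fiber being reduced with rational singularities is the remaining content of $(FRS)$).

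Next I would rewrite $h_{X}(\mathbb{Z}_{q}/\mathfrak{m}_{q}^{n})$ as a $p$-adic volume, exactly as in the proof of Theorem~\ref{Theorem 4.4}: using the commutative reduction square
\[
\begin{array}{ccc}
\mathbb{Z}_{q}^{M} & \overset{\varphi_{\mathbb{Z}_{q}}}{\longrightarrow} & \mathbb{Z}_{q}^{N}\\
\downarrow & \, & \downarrow\\
(\mathbb{Z}_{q}/\mathfrak{m}_{q}^{n})^{M} & \overset{\varphi}{\longrightarrow} & (\mathbb{Z}_{q}/\mathfrak{m}_{q}^{n})^{N}
\end{array}
\]
one gets $h_{X}(\mathbb{Z}_{q}/\mathfrak{m}_{q}^{n})=\frac{(\varphi_{\mathbb{Z}_{q}})_{*}\mu_{q}(\mathfrak{m}_{q}^{n}\mathbb{Z}_{q}^{N})}{\lambda_{q}(\mathfrak{m}_{q}^{n}\mathbb{Z}_{q}^{N})}$, where $\mu_{q},\lambda_{q}$ are the normalized Haar measures on $\mathbb{Z}_{q}^{M}$ and $\mathbb{Q}_{q}^{N}$. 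So $iv')$ amounts to: for every prime power $q$ the sequence of these normalized fiber-volumes over shrinking balls around $0$ stays bounded. Since $\{\mathfrak{m}_{q}^{n}\mathbb{Z}_{q}^{N}\}_{n}$ is a local basis of bounded eccentricity at $0$, the statement we want is precisely that $(\varphi_{\mathbb{Z}_{q}})_{*}\mu_{q}$ is bounded with respect to it near $0$. This follows once we know $(\varphi_{\mathbb{Z}_{q}})_{*}\mu_{q}$ has continuous density near $0$ — a continuous density on a compact neighborhood is bounded, and the normalized ratios $\mu(N)/\lambda(N)$ then converge to that density at $0$ by a Lebesgue-differentiation/averaging argument that is straightforward for $p$-adic balls.

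The key input is therefore Theorem~\ref{Auxilery Raminir theorem}: since $\varphi_{\mathbb{Q}}$ is $(FRS)$ over $0$, for each point $y$ of the fiber and each completion $\mathbb{Q}_{q}\supseteq\mathbb{Q}$ (obtained by choosing an embedding of a number field of definition, cf.\ Lemma~\ref{Lemma 4.3}), condition $b)$ gives a Zariski neighborhood $U_{y}$ of $y$ in $\mathbb{A}_{\mathbb{Q}}^{M}$ such that $(\varphi|_{U_{y}(\mathbb{Q}_{q})})_{*}(m)$ has continuous density for every Schwartz measure $m$ on $U_{y}(\mathbb{Q}_{q})$. The fiber $(\varphi^{-1}(0))(\mathbb{Z}_{q})$ is compact, so finitely many such $U_{y}$ together with a Zariski open $V$ disjoint from $\varphi^{-1}(0)$ cover $\mathbb{A}_{\mathbb{Z}_{q}}^{M}$; choosing a smooth partition of unity $1=\sum f_{j}+f_{V}$ on $\mathbb{Z}_{q}^{M}$ subordinate to this cover, each $f_{j}\cdot\mu_{q}$ is Schwartz on the corresponding $U_{y}(\mathbb{Q}_{q})$, so $(\varphi_{\mathbb{Z}_{q}})_{*}(f_{j}\mu_{q})$ has continuous density near $0$; and $(\varphi_{\mathbb{Z}_{q}})_{*}(f_{V}\mu_{q})$ vanishes on a neighborhood of $0$ since $\varphi(\operatorname{supp} f_{V})$ avoids $0$. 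Summing, $(\varphi_{\mathbb{Z}_{q}})_{*}\mu_{q}$ has continuous density near $0$, hence is bounded there, giving $iv')$.

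The main obstacle is the uniformity over \emph{all} prime powers $q$, including the finitely many "bad" primes. The $(FRS)$ property of $\varphi_{\mathbb{Q}}$ is characteristic-zero data, so Theorem~\ref{Auxilery Raminir theorem}(b) applies to every non-Archimedean local field $F\supseteq\mathbb{Q}$, in particular to every $F=\mathbb{Q}_{q}$, with no primes excluded — this is exactly why the proof yields $iv')$ (all $q$) rather than merely $iv)$ at almost all $q$. The one point needing care is that Theorem~\ref{Auxilery Raminir theorem}(b) is stated over a finitely generated base field $k$, so one should first descend $\varphi$ to a number field $k$ (automatic here since $\varphi$ is already defined over $\mathbb{Q}$, or over $\mathbb{Z}[S_{1}^{-1}]$), pick for each residue characteristic an embedding $k\hookrightarrow\mathbb{Q}_{q}$ via Lemma~\ref{Lemma 4.3} placing the relevant fiber point in $\mathbb{Z}_{q}^{M}$, and then apply $b)$; the continuity conclusion is insensitive to the choice of embedding. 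The remaining steps — compactness of $\varphi^{-1}(0)(\mathbb{Z}_{q})$, existence of a $p$-adic smooth partition of unity, and boundedness of a continuous density on a compact set — are routine.
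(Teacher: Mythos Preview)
Your proposal is correct and follows essentially the same route as the paper: translate $h_X(\mathbb{Z}_q/\mathfrak{m}_q^n)$ into the ratio of a pushforward Haar measure over shrinking balls, then use that $\varphi_\mathbb{Q}$ is $(FRS)$ over $0$ together with Theorem~\ref{Auxilery Raminir theorem}(a)$\Rightarrow$(b) and compactness of $(\varphi^{-1}(0))(\mathbb{Z}_q)$ to obtain continuous (hence bounded) density near $0$. The paper organizes the compactness step slightly differently: rather than covering the fiber by Zariski opens coming from Theorem~\ref{Auxilery Raminir theorem}(b) and summing via a partition of unity, it invokes Elkik's theorem on the openness of the $(FRS)$ locus and a sequential-compactness argument to find a single analytic set $U_R=\varphi_{\mathbb{Z}_q}^{-1}(\mathfrak{m}_q^R\mathbb{Z}_q^N)$ contained entirely in the $(FRS)$ locus, then applies Theorem~\ref{Auxilery Raminir theorem} once to $\mu|_{U_R}$. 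Both packagings are valid; the paper's avoids the partition of unity at the cost of citing Elkik.

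One correction to your discussion of obstacles: your handling of the finitely-generated-field hypothesis is confused. The issue is not descending $\varphi$ (it is already defined over $\mathbb{Q}$) but rather that the points $y\in(\varphi^{-1}(0))(\mathbb{Z}_q)$ at which you want to invoke Theorem~\ref{Auxilery Raminir theorem}(b) are $\mathbb{Q}_q$-points and need not lie in $X(k)$ for any number field $k$; Lemma~\ref{Lemma 4.3} is of no help here. The actual fix---used tacitly by the paper as well---is that the implication (a)$\Rightarrow$(b) of Theorem~\ref{Auxilery Raminir theorem} holds over an arbitrary characteristic-$0$ field (only (c)$\Rightarrow$(a) requires $k$ to be finitely generated), so one first base-changes $\varphi$ to $\mathbb{Q}_q$ and then applies (a)$\Rightarrow$(b) directly at the $\mathbb{Q}_q$-points.
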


\begin{proof}
By Proposition \ref{prop 2.4}, there exists an inclusion $X\hookrightarrow\mathbb{A}_{\mathbb{Z}}^{M}$
and a morphism $\varphi:\mathbb{A}_{\mathbb{Z}}^{M}\longrightarrow\mathbb{A}_{\mathbb{Z}}^{N}$,
flat over $0$, such that $X\simeq\varphi^{-1}(0)$. Consider $\varphi_{\mathbb{Q}}:\mathbb{A}_{\mathbb{Q}}^{M}\longrightarrow\mathbb{A}_{\mathbb{Q}}^{N}$
and notice that $\varphi_{\mathbb{Q}}$ is $(FRS)$ at any $x\in\varphi_{\mathbb{Q}}^{-1}(0)(\mathbb{\overline{Q}})$,
as $X_{\mathbb{\overline{Q}}}$ has rational singularities.

Let $\mu$ be the normalized Haar measure on $\mathbb{Z}_{q}^{M}$.
As in the proof of Step 2 of Theorem \ref{Theorem 4.4}, we have the
following commutative diagram:
\[
\begin{array}{ccc}
\mathbb{Z}_{q}^{M} & \overset{\varphi_{\mathbb{Z}_{q}}}{\longrightarrow} & \mathbb{Z}_{q}^{N}\\
\downarrow\pi_{n,M} & \, & \downarrow\pi_{n,N}\\
(\mathbb{Z}_{q}/\mathfrak{m}_{q}^{n})^{M} & \overset{\varphi_{\mathbb{Z}_{q}/\mathfrak{m}_{q}^{n}}}{\longrightarrow} & (\mathbb{Z}_{q}/\mathfrak{m}_{q}^{n})^{N}.
\end{array}
\]
In order to show that $h_{X}(\mathbb{Z}_{q}/\mathfrak{m}_{q}^{n})$
is bounded, it is enough to show that $(\varphi_{\mathbb{Z}_{q}})_{*}\mu$
has bounded density with respect to the local basis $\{p^{n}\mathbb{Z}_{q}^{N}\}_{n}$. 

After base change to $\mathbb{Q}_{q}$, we have a map $\varphi_{\mathbb{Q}_{q}}:\mathbb{A}_{\mathbb{Q}_{q}}^{M}\longrightarrow\mathbb{A}_{\mathbb{Q}_{q}}^{N}$,
which is $(FRS)$ at any point $x\in X(\mathbb{Q}_{q})$.

For any $t\in\mathbb{N}$, consider the set $U_{t}=\varphi_{\mathbb{Z}_{q}}^{-1}(p^{t}\mathbb{Z}_{q}^{N})$
and note that it is open, closed and compact. We claim that there
exists $R\in\mathbb{N}$, such that for any $t>R$ we have that $\varphi$
is $(FRS)$ at any point $y\in U_{t}$. Indeed, otherwise we may construct
a sequence $x_{t}\in U_{t}$ such that $\varphi$ is not $(FRS)$
at $x_{t}$. By a theorem of Elkik (\cite{Elk78}, \cite[Theorem 6.3]{AA16}),
the $(FRS)$ locus of $\varphi$ is an open set. After choosing a
convergent subsequence $\{x_{t_{j}}\}$, we obtain that $\varphi_{\mathbb{Q}_{q}}$
is not $(FRS)$ at the limit $x_{0}\in\mathbb{Z}_{q}^{M}$. But $\varphi_{\mathbb{Q}_{q}}(x_{0})\in\cap_{t}\varphi_{\mathbb{Q}_{q}}(U_{t})=\{0\}$
so $x_{0}\in X(\mathbb{Q}_{q})$ and we get a contradiction.

Finally, by Theorem \ref{Auxilery Raminir theorem}, the measure $(\varphi_{\mathbb{Z}_{q}})_{*}\mu|_{U_{R}}$
has continuous density, and in particular bounded with respect to
the local basis $\{p^{n}\mathbb{Z}_{q}^{N}\}_{n}$. Hence, from the
definition of $U_{R}$, we have for $n>R$: 
\[
h_{X}(\mathbb{Z}_{q}/\mathfrak{m}_{q}^{n})=\frac{(\varphi_{\mathbb{Z}_{q}})_{*}\mu(p^{n}\mathbb{Z}_{q}^{N})}{q^{-nN}}=\frac{(\varphi_{\mathbb{Z}_{q}})_{*}\mu|_{U_{R}}(p^{n}\mathbb{Z}_{q}^{N})}{q^{-nN}}<C,
\]
for some constant $C>0$ and we are done. 
\end{proof}

\subsubsection{\label{subsec:Some-constructions} Some constructions}

Let $X$ be an affine $\mathbb{Z}$-scheme with a coordinate ring
\[
\mathbb{Z}[X]:=\mathbb{Z}[x_{1},...,x_{c}]/(f_{1},...,f_{m}),
\]
and fix $K\in\mathbb{N}$. 
\begin{enumerate}
\item For any $g\in\mathbb{Z}[x_{1},...,x_{c}]$ denote by $g_{K}\in\mathbb{Q}[x_{1},...,x_{c}]$
the function $g_{K}(x_{1},...,x_{c}):=g(\frac{x_{1}}{K},...,\frac{x_{l}}{K})$. 
\item For any $\varphi:\mathbb{A}_{\mathbb{Z}}^{M}\rightarrow\mathbb{A}_{\mathbb{Z}}^{N}$
of the form $\varphi=(\varphi_{1},...,\varphi_{N})$, we denote by
$\varphi_{K}:\mathbb{A}_{\mathbb{Q}}^{M}\rightarrow\mathbb{A}_{\mathbb{Q}}^{N}$
the morphism $\varphi_{K}:=((\varphi_{1})_{K},...,(\varphi_{N})_{K})$.
\item Let $r(K)\in\mathbb{N}$ be minimal such that $K^{r(K)}(f_{i})_{K}$
has integer coefficients for any $i$. Denote by $\widetilde{X}_{K}$
the $\mathbb{Z}$-scheme with the following coordinate ring:
\[
\mathbb{Z}[\widetilde{X}_{K}]:=\mathbb{Z}[x_{1},...,x_{c}]/(K^{r(K)}(f_{1})_{K},...,K^{r(K)}(f_{m})_{K}).
\]
\item For any $\mathbb{Q}$-morphism $\psi:X_{\mathbb{Q}}\rightarrow\mathbb{A}_{\mathbb{Q}}^{M}$
of the form $\psi=(\psi_{1},...,\psi_{N})$ denote by $K\psi:=(K\cdot\psi_{1},...,K\cdot\psi_{N})$. 
\item For any affine $\mathbb{Q}$-scheme $Z$, with $\mathbb{Q}[Z]=\mathbb{Q}[y_{1},...,y_{d}]/(g_{1},...,g_{k})$
and a $\mathbb{Q}$-morphism $\phi:Z\rightarrow X_{\mathbb{Q}}$,
we may define a morphism $K\phi:Z\rightarrow\left(\widetilde{X}_{K}\right)_{\mathbb{Q}}$
by $K\phi(y_{1},...,y_{d}):=K\cdot\phi(y_{1},...,y_{d})$.
\end{enumerate}

\subsubsection{Proof for the case that $X_{\mathbb{Q}}$ is a $(CIA)$.}

In this case, we have an inclusion $\psi:X_{\mathbb{Q}}\hookrightarrow\mathbb{A}_{\mathbb{Q}}^{M}$
and a morphism $\varphi:\mathbb{A}_{\mathbb{Q}}^{M}\longrightarrow\mathbb{A}_{\mathbb{Q}}^{N}$,
flat over $0$, such that $X_{\mathbb{Q}}\simeq\varphi^{-1}(0)$. 
\begin{lem}
\label{Lemma 4.6} Let $X$ be a finite type $\mathbb{Z}$-scheme,
such that $X_{\mathbb{Q}}$ is a $(CIA)$, defined by the morphisms
$\varphi,\psi$ as above. Then there exists a $\mathbb{Z}$-scheme
$\widehat{X}_{\varphi,\psi}$, which is a $(CIA)$, and a $\mathbb{Z}$-morphism
$\phi:X\rightarrow\widehat{X}_{\varphi,\psi},$ such that $\phi_{\mathbb{Q}}$
is an isomorphism. 
\end{lem}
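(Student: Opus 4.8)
The plan is to make $\widehat{X}_{\varphi,\psi}$ out of the integral closures of the rescaled equations.  Write $\mathbb{Z}[X] = \mathbb{Z}[x_1,\dots,x_c]/(f_1,\dots,f_m)$ and recall that $X_{\mathbb{Q}}$ sits inside $\mathbb{A}^M_{\mathbb{Q}}$ as $\varphi^{-1}(0)$ for $\varphi = (\varphi_1,\dots,\varphi_N) : \mathbb{A}^M_{\mathbb{Q}} \to \mathbb{A}^N_{\mathbb{Q}}$, with $\psi$ the inclusion.  The immediate problem is that $\varphi,\psi$ have $\mathbb{Q}$-coefficients, so the naive closed subscheme of $\mathbb{A}^M_{\mathbb{Z}}$ they cut out need not be a finite-type $\mathbb{Z}$-scheme with the right special fibers.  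The remedy is to clear denominators in a controlled way: choose $K\in\mathbb{N}$ divisible by all denominators appearing in $\varphi$ and $\psi$, and set $\widehat{X}_{\varphi,\psi} := \widetilde{(\mathbb{A}^M_{\mathbb{Z}})}{}$-type construction — concretely, let $\widehat{X}_{\varphi,\psi}$ be the closed subscheme of $\mathbb{A}^M_{\mathbb{Z}}$ defined by the equations $K^{r(K)}(\varphi_i)_K = 0$, $i=1,\dots,N$, in the notation of Section \ref{subsec:Some-constructions}.  Because $\varphi$ was flat over $0$ and a complete intersection there, and because rescaling the $x$-variables by $1/K$ and multiplying each defining equation by a unit of $\mathbb{Q}$ is an automorphism of $\mathbb{A}^M_{\mathbb{Q}}$ composed with a unit rescaling of the target, the generic fiber $(\widehat{X}_{\varphi,\psi})_{\mathbb{Q}}$ is $\mathbb{Q}$-isomorphic to $\varphi^{-1}(0) \simeq X_{\mathbb{Q}}$, and the equations $K^{r(K)}(\varphi_i)_K$ form a regular sequence in $\mathbb{Z}[x_1,\dots,x_c]$; hence $\widehat{X}_{\varphi,\psi}$ is a $(CIA)$ over $\mathbb{Z}$.

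Next I would produce the morphism $\phi : X \to \widehat{X}_{\varphi,\psi}$.  On generic fibers we already have $\psi_{\mathbb{Q}} : X_{\mathbb{Q}} \hookrightarrow \mathbb{A}^M_{\mathbb{Q}}$ landing in $\varphi^{-1}(0)$, and composing with the rescaling $x \mapsto Kx$ sends $\varphi^{-1}(0)$ isomorphically onto $(\widehat{X}_{\varphi,\psi})_{\mathbb{Q}}$; this is exactly the map $K\psi$ of item (4) in Section \ref{subsec:Some-constructions}, extended to $K\phi$ in item (5).  So one gets a $\mathbb{Q}$-morphism $X_{\mathbb{Q}} \to (\widehat{X}_{\varphi,\psi})_{\mathbb{Q}}$ which is an isomorphism.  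The point that needs checking is that this $\mathbb{Q}$-morphism is induced by an honest $\mathbb{Z}$-morphism $X \to \widehat{X}_{\varphi,\psi}$.  Since $X$ is a finite-type $\mathbb{Z}$-scheme and $\widehat{X}_{\varphi,\psi}$ is a closed subscheme of affine space over $\mathbb{Z}$, a $\mathbb{Z}$-morphism is just a choice of images $g_1,\dots,g_M \in \mathbb{Z}[X]$ of the coordinate functions satisfying the relations $K^{r(K)}(\varphi_i)_K(g_1,\dots,g_M) = 0$ in $\mathbb{Z}[X]$.  The $\mathbb{Q}$-morphism $K\psi$ gives such a tuple over $\mathbb{Q}$, i.e.\ elements of $\mathbb{Q}[X_{\mathbb{Q}}] = \mathbb{Z}[X]\otimes\mathbb{Q}$; after possibly enlarging $K$ further (absorbing the finitely many denominators of the coordinates of $K\psi$ and re-running the $r(K)$ bookkeeping — this only rescales $\widehat{X}_{\varphi,\psi}$ again, harmlessly) we may take the $g_j$ in $\mathbb{Z}[X]$, and the relations, which hold after tensoring with $\mathbb{Q}$ and involve only polynomial identities with $\mathbb{Z}$-coefficients, hold already in $\mathbb{Z}[X]$ because $\mathbb{Z}[X]$ injects into $\mathbb{Z}[X]\otimes\mathbb{Q}$ modulo its torsion — and here one uses that $X_{\mathbb{Q}}$ being a complete intersection forces $\mathbb{Z}[X]$ to have no $\mathbb{Z}$-torsion after inverting the finitely many bad primes, so one shrinks $\operatorname{Spec}\mathbb{Z}$ and then spreads back out. (Cleanest is to first invoke Proposition \ref{prop 2.4}-type spreading-out: $\varphi,\psi$ are defined over $\mathbb{Z}[S_1^{-1}]$ for a finite set $S_1$, build $\widehat{X}_{\varphi,\psi}$ and $\phi$ over $\mathbb{Z}[S_1^{-1}]$ first, then glue with the remaining primes via \cite[Theorem 11.3.10]{Gro66} or \cite[Theorem 11.3.10]{Gro66}-style limit arguments.)

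The main obstacle is the last point: ensuring that the $\mathbb{Q}$-isomorphism $K\psi$ descends to a $\mathbb{Z}$-morphism $\phi$ that is still an isomorphism on generic fibers, while keeping $\widehat{X}_{\varphi,\psi}$ a genuine $(CIA)$ over all of $\mathbb{Z}$ — i.e.\ controlling simultaneously the denominators in the \emph{equations} of $\widehat{X}_{\varphi,\psi}$ and in the \emph{map} $\phi$, and verifying the regular-sequence property of the cleared equations over $\mathbb{Z}$ (not just over $\mathbb{Q}$).  The regular-sequence check is the place to be careful: multiplying $(\varphi_i)_K$ by $K^{r(K)}$ can a priori introduce common factors modulo small primes, but since we only need $K^{r(K)}(\varphi_i)_K$ to be a regular sequence in the polynomial ring $\mathbb{Z}[x_1,\dots,x_c]$ (a UFD), and over $\mathbb{Q}$ they are obtained from the original regular sequence by an automorphism and unit scalings, the sequence stays regular over $\mathbb{Z}$ after we also strip any spurious powers of primes dividing $K$ from each generator; the minimality of $r(K)$ in item (3) is precisely designed to do this.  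I would therefore organize the proof as: (a) fix $K$ clearing all denominators of $\varphi$; (b) define $\widehat{X}_{\varphi,\psi} = \widetilde{(\varphi^{-1}(0))}$ via the cleared equations and check it is a $(CIA)$ with $(\widehat{X}_{\varphi,\psi})_{\mathbb{Q}} \simeq X_{\mathbb{Q}}$; (c) define $\phi$ as $K\psi$ composed with the given $X \simeq$ (integral model), enlarging $K$ if needed so $K\psi$ has integral coordinates; (d) check $\phi_{\mathbb{Q}}$ is the composite of the isomorphism $X_{\mathbb{Q}} \simeq \varphi^{-1}(0)$ with the rescaling isomorphism $\varphi^{-1}(0) \simeq (\widehat{X}_{\varphi,\psi})_{\mathbb{Q}}$, hence an isomorphism.
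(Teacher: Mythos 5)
Your construction is the same as the paper's: clear the denominators of $\psi$ by a constant (the paper's $P$, chosen so that $P\psi$ is a $\mathbb{Z}$-morphism), rescale $\varphi$ by precomposing with division by that constant and clear the resulting denominators, set $\widehat{X}_{\varphi,\psi}$ to be the vanishing locus over $\mathbb{Z}$ of the cleared equations (the paper's $(P^{m}\varphi_{P})^{-1}(0)$), and take $\phi$ to be the rescaled $\psi$; the paper presents this more tersely and does not dwell on the regular-sequence or $\mathbb{Z}$-morphism checks you flag, which are indeed the points needing care. One inaccuracy in an aside: the minimality of $r(K)$ in Section~\ref{subsec:Some-constructions} only ensures a single power $K^{r(K)}$ clears all the $(f_{i})_{K}$ simultaneously; it does not strip each individual $K^{r(K)}(\varphi_{i})_{K}$ of spurious prime factors, so the paper never performs the content-stripping you describe, and doing so would change the $\mathbb{Z}$-ideal (hence the scheme $\widehat{X}_{\varphi,\psi}$) rather than ``harmlessly rescale'' it.
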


\begin{proof}
Let $\mathbb{Z}[X]:=\mathbb{Z}[x_{1},...,x_{c}]/(f_{1},...,f_{m})$
be the coordinate ring of $X$. Denote by $S=\{p_{1},...,p_{s}\}$
the set of all prime numbers that appear in the denominators of the
polynomial maps $\psi$ and $\varphi$, and set $P':=\underset{p_{i}\in S}{\prod}p_{i}$.
Let $t\in\mathbb{N}$ be minimal such that $\left(P'\right)^{t}\psi$
has integer coefficients. Denote $P:=\left(P'\right)^{t}$ and notice
that $P\psi$ is a $\mathbb{Z}$-morphism. Let $\varphi_{P}:\mathbb{A}_{\mathbb{Q}}^{M}\rightarrow\mathbb{A}_{\mathbb{Q}}^{N}$
be as defined in \ref{subsec:Some-constructions}. Notice that there
exists $m\in\mathbb{N}$ such that $P^{m}\varphi_{P}$ has coefficients
in $\mathbb{Z}$. We now have the following $\mathbb{Z}$-morphisms
$X\overset{P\psi}{\longrightarrow}\mathbb{A}_{\mathbb{Z}}^{M}\overset{P^{m}\varphi_{P}}{\longrightarrow}\mathbb{A}_{\mathbb{Z}}^{N}$.
Set $\widehat{X}_{\varphi,\psi}$ to be the fiber $(P^{m}\varphi_{P})^{-1}(0)$
and notice that $\phi:=P\psi$ is a $\mathbb{Z}$-morphism from $X$
to $\widehat{X}_{\varphi,\psi}$, such that $\phi_{\mathbb{Q}}$ is
an isomorphism, and $\widehat{X}_{\varphi,\psi}$ is a $(CIA)$.
\end{proof}
\begin{lem}
\label{Lemma 4.7} Let $X$ and $Y$ be affine $\mathbb{Z}$-schemes
and $\phi:X\longrightarrow Y$ be a $\mathbb{Z}$-morphism, such that
$\phi_{\mathbb{Q}}$ is an isomorphism. Then there exist $c,N\in\mathbb{N}$,
such that for any prime power $q$ and any $n$:
\[
\left|X(\mathbb{Z}_{q}/\mathfrak{m}_{q}^{n})\right|\leq q^{N\cdot c}\cdot\left|Y(\mathbb{Z}_{q}/\mathfrak{m}_{q}^{n})\right|.
\]
\end{lem}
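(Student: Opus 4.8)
The plan is to control the fibers of the induced map $\phi_{A}\colon X(A)\to Y(A)$ for $A=\mathbb{Z}_{q}/\mathfrak{m}_{q}^{n}$: I want to show that every fiber has size at most a fixed power of $q$, independent of $q$ and $n$, and then sum over the image.

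First I would fix integral presentations $\mathbb{Z}[X]=\mathbb{Z}[x_{1},\dots,x_{c}]/(f_{1},\dots,f_{m})$ and $\mathbb{Z}[Y]=\mathbb{Z}[y_{1},\dots,y_{d}]/(g_{1},\dots,g_{l})$, and choose polynomials $\phi_{1},\dots,\phi_{d}\in\mathbb{Z}[x_{1},\dots,x_{c}]$ representing the images $\phi^{*}(y_{j})$, so that $\phi_{A}$ is given by $a\mapsto(\phi_{1}(a),\dots,\phi_{d}(a))$ for every $\mathbb{Z}$-algebra $A$. Since $\phi_{\mathbb{Q}}$ is an isomorphism, $\phi^{*}_{\mathbb{Q}}\colon\mathbb{Q}[Y]\to\mathbb{Q}[X]$ is surjective, so for each $i$ there is some $\psi_{i}\in\mathbb{Q}[y_{1},\dots,y_{d}]$ with $\psi_{i}(\phi_{1}(x),\dots,\phi_{d}(x))\equiv x_{i}$ modulo $(f_{1},\dots,f_{m})\cdot\mathbb{Q}[x_{1},\dots,x_{c}]$. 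Fixing one such $\psi_{i}$ and an expression
\[
\psi_{i}(\phi_{1}(x),\dots,\phi_{d}(x))-x_{i}=\sum_{k}r^{(i)}_{k}(x)\,f_{k}(x),\qquad r^{(i)}_{k}\in\mathbb{Q}[x_{1},\dots,x_{c}],
\]
I would then pick $N\in\mathbb{N}$ with $N\psi_{i}\in\mathbb{Z}[y_{1},\dots,y_{d}]$ and $Nr^{(i)}_{k}\in\mathbb{Z}[x_{1},\dots,x_{c}]$ for all $i,k$ (possible, since only finitely many coefficients occur), so that multiplying the displayed identity by $N$ yields an identity in $\mathbb{Z}[x_{1},\dots,x_{c}]$.

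Next, for any $\mathbb{Z}$-algebra $A$ and any $a\in X(A)$, evaluating this integral identity at $a$ and using $f_{k}(a)=0$ gives $(N\psi_{i})(\phi_{A}(a))=N a_{i}$ in $A$ for every $i$; hence, writing $b=\phi_{A}(a)$, the element $N a_{i}\in A$ is determined by $b$. Now specialize to $A=\mathbb{Z}_{q}/\mathfrak{m}_{q}^{n}=\mathbb{Z}_{q}/p^{n}\mathbb{Z}_{q}$ (recall $\mathbb{Z}_{q}$ is unramified over $\mathbb{Z}_{p}$, so $\mathfrak{m}_{q}^{n}=p^{n}\mathbb{Z}_{q}$). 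Writing $N=p^{v}u$ with $v=v_{p}(N)$ the $p$-adic valuation of $N$ and $u\in A^{\times}$, multiplication by $N$ on $A$ has the same kernel as multiplication by $p^{v}$, which has cardinality $q^{\min(v,n)}\le q^{v}\le q^{N}$. Therefore each coordinate of a point of $\phi_{A}^{-1}(b)$ is confined to a set of size at most $q^{v}$, so $|\phi_{A}^{-1}(b)|\le q^{cv}\le q^{cN}$ for every $b\in Y(A)$, and summing over the image,
\[
|X(A)|=\sum_{b\in\phi_{A}(X(A))}|\phi_{A}^{-1}(b)|\le q^{cN}\cdot|Y(A)|,
\]
which is the asserted bound with this $c$ and $N$.

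The only step requiring care is the passage from the rational membership $\psi_{i}(\phi(x))-x_{i}\in(f_{1},\dots,f_{m})\mathbb{Q}[x]$ to the integral identity; this is harmless because a single choice of cofactors $r^{(i)}_{k}$ involves only finitely many denominators, which together with the denominators of the $\psi_{i}$ can all be absorbed into one integer $N$. I would also remark, as a sanity check fitting the theme of the paper, that $v_{p}(N)=0$ whenever $p\nmid N$, so for all but finitely many primes the map $\phi_{A}$ is actually injective and $|X(A)|\le|Y(A)|$; the correction factor $q^{cN}$ is only needed at the finitely many primes dividing $N$.
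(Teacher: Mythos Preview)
Your proof is correct and follows essentially the same strategy as the paper: both clear denominators in the rational inverse $\phi_{\mathbb{Q}}^{-1}$ to obtain an integer $N$ (called $K$ in the paper) such that the composite with $\phi$ becomes coordinatewise multiplication by $N$, and then bound the fibers of $\phi_{n}$ by the fibers of multiplication by $N$ on $(\mathbb{Z}_{q}/\mathfrak{m}_{q}^{n})^{c}$. The paper packages this geometrically via an auxiliary dilated scheme $\widetilde{X}_{K}$ and a $\mathbb{Z}$-morphism $K\phi^{-1}\colon Y\to\widetilde{X}_{K}$, while you work directly with the polynomial identity; the content is the same, and your explicit handling of the integral cofactors $r^{(i)}_{k}$ is if anything slightly more careful.
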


\begin{proof}
$\phi$ induces a map $\phi_{n}:X(\mathbb{Z}_{q}/\mathfrak{m}_{q}^{n})\rightarrow Y(\mathbb{Z}_{q}/\mathfrak{m}_{q}^{n})$.
It is enough to show that $\phi_{n}$ has fibers of size at most $q^{N\cdot c}$.
Assume that $\mathbb{Z}[X]=\mathbb{Z}[x_{1},...,x_{c}]/(f_{1},...,f_{m})$.
As in \ref{subsec:Some-constructions}, we may choose $K,r(K)\in\mathbb{N}$
such that $\widetilde{X}_{K}$ is a $\mathbb{Z}$-scheme with a coordinate
ring 
\[
\mathbb{Z}[\widetilde{X}_{K}]:=\mathbb{Z}[x_{1},...,x_{c}]/(K^{r(K)}(f_{1})_{K},...,K^{r(K)}(f_{m})_{K}),
\]

and $K\phi^{-1}:Y\longrightarrow\widetilde{X}_{K}$ is a $\mathbb{Z}$-morphism.
The map $(K\phi^{-1}\circ\phi):X\longrightarrow\widetilde{X}_{K}$
is just coordinate-wise multiplication by $K$. Thus $\left(K\phi^{-1}\right)_{n}\circ\phi_{n}:X(\mathbb{Z}_{q}/\mathfrak{m}_{q}^{n})\longrightarrow\widetilde{X}_{K}(\mathbb{Z}_{q}/\mathfrak{m}_{q}^{n})$
sends $(a_{1},...,a_{c})\in X(\mathbb{Z}_{q}/\mathfrak{m}_{q}^{n})$
to $(Ka_{1},...,Ka_{c})\in\widetilde{X}_{K}(\mathbb{Z}_{q}/\mathfrak{m}_{q}^{n})$. 

For any prime $p$, let $N(p)$ be the maximal integer such that $p^{N(p)}|K$.
Note that the map $(a_{1},...,a_{n})\mapsto(Ka_{1},...,Ka_{n})$ from
$\left(\mathbb{Z}_{q}/\mathfrak{m}_{q}^{n}\right)^{c}$ to $\left(\mathbb{Z}_{q}/\mathfrak{m}_{q}^{n}\right)^{c}$
has fibers of size $q^{N(p)\cdot c}$ for $n>N(p)$. Indeed, for $(b_{1},...,b_{c})\in\left(\mathbb{Z}_{q}/\mathfrak{m}_{q}^{n}\right)^{c}$,
$(Ka_{1},...,Ka_{c})=(b_{1},...,b_{c})$ if and only if $Ka_{i}=b_{i}$
for any $1\leq i\leq c$. Since $\frac{K}{p^{N(p)}}$ is invertible
in $\mathbb{Z}_{q}/\mathfrak{m}_{q}^{n}$, it is equivalent to demand
that $p^{N(p)}a_{i}=c_{i}$ for some multiple $c_{i}$ of $b_{i}$
by an invertible element. Hence, we can reduce to the case of the
map $(a_{1},...,a_{n})\mapsto(p^{N(p)}a_{1},...,p^{N(p)}a_{n})$,
which clearly has fibers of size $q^{N(p)\cdot c}$ for $n>N(p)$. 

Note that for any $y\in Y(\mathbb{Z}_{q}/\mathfrak{m}_{q}^{n})$ we
have $\left|\phi_{n}^{-1}(y)\right|\leq\left|\left(\left(K\phi^{-1}\right)_{n}\circ\phi_{n}\right)^{-1}(x)\right|$,
where $x=\left(K\phi^{-1}\right)_{n}(y)$. Since the fibers of $\left(K\phi^{-1}\right)_{n}\circ\phi_{n}$
are of size bounded by $q^{N(p)c}$, so does the fibers of $\phi_{n}$.
We may take $N:=K>N(p)$ and we are done. 
\end{proof}
\begin{cor}
Let $X$ be a finite type $\mathbb{Z}$-scheme such that $X_{\mathbb{Q}}$
is a $(CIA)$. Then condition $iii)$ of Theorem \ref{Main extended theorem}
implies condition $iv')$.
\end{cor}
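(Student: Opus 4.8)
The plan is to deduce the Corollary from the $(CIA)$-for-$X$ case (Proposition \ref{Proposition 4.5}) by comparing $X$ with an auxiliary $(CIA)$ $\mathbb{Z}$-scheme supplied by Lemma \ref{Lemma 4.6}, and then controlling the discrepancy of point counts via Lemma \ref{Lemma 4.7}. Throughout we take $X$ to be affine, which is the running assumption of this subsection (it is already implicit in the proof of Lemma \ref{Lemma 4.6}, where $X$ is handled through its coordinate ring). Since $X_{\mathbb{Q}}$ is a $(CIA)$, Proposition \ref{Prop 2.3} over $\mathbb{Q}$ provides a closed embedding $\psi:X_{\mathbb{Q}}\hookrightarrow\mathbb{A}_{\mathbb{Q}}^{M}$ and a morphism $\varphi:\mathbb{A}_{\mathbb{Q}}^{M}\rightarrow\mathbb{A}_{\mathbb{Q}}^{N}$, flat over $0$, with $X_{\mathbb{Q}}\simeq\varphi^{-1}(0)$.

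First I would feed $(\varphi,\psi)$ into Lemma \ref{Lemma 4.6} to obtain a finite type $\mathbb{Z}$-scheme $\widehat{X}:=\widehat{X}_{\varphi,\psi}$ which is a $(CIA)$, together with a $\mathbb{Z}$-morphism $\phi:X\rightarrow\widehat{X}$ whose base change $\phi_{\mathbb{Q}}$ is an isomorphism. Consequently $\widehat{X}_{\overline{\mathbb{Q}}}\simeq X_{\overline{\mathbb{Q}}}$, so condition $iii)$ passes verbatim to $\widehat{X}$: reducedness, irreducibility and the property of having rational singularities depend only on the geometric generic fiber. Since $\widehat{X}$ is in addition a $(CIA)$, Proposition \ref{Proposition 4.5} applies to $\widehat{X}$ and shows that $\widehat{X}$ satisfies $iv')$, i.e. for each prime power $q$ the sequence $n\mapsto h_{\widehat{X}}(\mathbb{Z}_{q}/\mathfrak{m}_{q}^{n})$ is bounded.

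It then remains to transfer this boundedness along $\phi$. Lemma \ref{Lemma 4.7}, applied to $\phi:X\rightarrow\widehat{X}$ (both affine, with $\phi_{\mathbb{Q}}$ an isomorphism), yields constants $c,N\in\mathbb{N}$ with $\left|X(\mathbb{Z}_{q}/\mathfrak{m}_{q}^{n})\right|\leq q^{Nc}\left|\widehat{X}(\mathbb{Z}_{q}/\mathfrak{m}_{q}^{n})\right|$ for every prime power $q$ and every $n$. Because $\phi_{\mathbb{Q}}$ is an isomorphism we have $\dim X_{\mathbb{Q}}=\dim\widehat{X}_{\mathbb{Q}}$; dividing the inequality by $\left|\mathbb{Z}_{q}/\mathfrak{m}_{q}^{n}\right|^{\dim X_{\mathbb{Q}}}=q^{n\dim X_{\mathbb{Q}}}$ gives
\[
h_{X}(\mathbb{Z}_{q}/\mathfrak{m}_{q}^{n})\leq q^{Nc}\cdot h_{\widehat{X}}(\mathbb{Z}_{q}/\mathfrak{m}_{q}^{n}).
\]
For a fixed $q$ the factor $q^{Nc}$ is a constant and the second factor is bounded in $n$ by the previous step, so $n\mapsto h_{X}(\mathbb{Z}_{q}/\mathfrak{m}_{q}^{n})$ is bounded; together with the irreducibility of $X_{\overline{\mathbb{Q}}}$ (which is part of $iii)$) this is precisely condition $iv')$.

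I do not expect a real obstacle here: the Corollary is an assembly of Lemma \ref{Lemma 4.6}, Lemma \ref{Lemma 4.7} and Proposition \ref{Proposition 4.5}, and the substantive content sits inside those statements, in particular inside the denominator-clearing constructions of Section \ref{subsec:Some-constructions} that make $\widehat{X}$ a $(CIA)$ $\mathbb{Z}$-model of $X_{\mathbb{Q}}$. The one point to keep in view is that the comparison factor $q^{Nc}$ in the last display genuinely depends on $q$; this is exactly why the present argument produces only $iv')$ and not the uniform-in-$q$ statement $iv)$, the upgrade to the latter being obtained afterwards through the chain $iii)\Leftrightarrow ii)$ of \cite{AA}.
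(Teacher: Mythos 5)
Your argument is correct and matches the paper's proof essentially step for step: both obtain $\widehat{X}$ from Lemma \ref{Lemma 4.6}, apply Proposition \ref{Proposition 4.5} to $\widehat{X}$, and use Lemma \ref{Lemma 4.7} to compare point counts, dividing by $q^{n\dim X_{\mathbb{Q}}}$. Your additional observations (that condition $iii)$ transfers to $\widehat{X}$ because $\phi_{\mathbb{Q}}$ is an isomorphism, and that the $q^{Nc}$ factor is what restricts the conclusion to $iv')$ rather than $iv)$) are accurate and helpfully make explicit what the paper leaves implicit.
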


\begin{proof}
By Lemma \ref{Lemma 4.6}, we may choose a $\mathbb{Z}$-scheme $\widehat{X}$,
which is a $(CIA)$, and a $\mathbb{Z}$-morphism $\phi:X\longrightarrow\widehat{X},$
such that $\phi_{\mathbb{Q}}$ is an isomorphism. By Proposition \ref{Proposition 4.5}
and Lemma \ref{Lemma 4.7}, there exists $c,N\in\mathbb{N}$, such
that for any prime power $q$, there exists $C>0$ such that:
\[
h_{X}(\mathbb{Z}_{q}/\mathfrak{m}_{q}^{n})=\frac{\left|X(\mathbb{Z}_{q}/\mathfrak{m}_{q}^{n})\right|}{q^{n\mathrm{dim}X_{\mathbb{Q}}}}\leq q^{c\cdot N}\cdot\frac{\left|\widehat{X}(\mathbb{Z}_{q}/\mathfrak{m}_{q}^{n})\right|}{q^{n\mathrm{dim}X_{\mathbb{Q}}}}\leq q^{c\cdot N}\cdot C,
\]
and hence condition $iv')$ holds.
\end{proof}

\subsubsection{Proof for the case when $X_{\mathbb{Q}}$ is an $(LCI)$.}

Using Lemma \ref{Lemma 4.2}, we may reduce to the case when $X$
is affine, with coordinate ring $\mathbb{Z}[X]:=\mathbb{Z}[x_{1},...,x_{c}]/(f_{1},...,f_{m})$.

Since $X_{\mathbb{Q}}$ is an $(LCI)$, we have an affine open cover
$\{\beta_{i}:U_{i}\hookrightarrow X_{\mathbb{Q}}\}_{i}$ of $X_{\mathbb{Q}}$
with inclusions $\psi_{i}:U_{i}\hookrightarrow\mathbb{A}_{\mathbb{Q}}^{M_{i}}$
and maps $\varphi_{i}:\mathbb{A}_{\mathbb{Q}}^{M_{i}}\longrightarrow\mathbb{A}_{\mathbb{Q}}^{N_{i}}$,
flat over $0$, such that $\psi_{i}:U_{i}\simeq\varphi_{i}^{-1}(0)$.
We may assume that $U_{i}$ is isomorphic to a basic open set $D(g_{i})$
for $g_{i}\in\mathbb{Q}[X]$ and $\beta_{i}^{*}:\mathbb{Q}[X]\longrightarrow\mathbb{Q}[X,t]/(g_{i}t-1)$
is the natural map. Since $\{D(g_{i})\}_{i}$ is a cover of $X_{\mathbb{Q}}$,
there exist $c_{i}'\in\mathbb{Z}[X]$ and $d_{i}\in\mathbb{Z}$ such
that $\sum\frac{c_{i}'\cdot g_{i}}{d_{i}}=1$. Thus, by multiplying
by all the $d_{i}$'s, we obtain $\sum c_{i}g_{i}=D$ for some $c_{i}\in\mathbb{Z}[X]$
and $D\in\mathbb{Z}$.

Choose large enough $P\in\mathbb{N}$ such that the following algebra
\[
\mathbb{Z}[x_{1},...,x_{c},t]/(f_{1},...,f_{m},Pg_{i}t-D\cdot P))
\]
 is a coordinate ring of a $\mathbb{Z}$-scheme $\widetilde{U}_{i}$,
for any $i$. Moreover, notice that $\widetilde{U}_{i}\simeq U_{i}$
over $\mathbb{Q}$. 
\begin{lem}
There exists $N\in\mathbb{N}$, such that for any prime power $q=p^{r}$
and any $n>N$ we have $\left|X(\mathbb{Z}_{q}/\mathfrak{m}_{q}^{n})\right|\leq\sum_{i}\left|\widetilde{U}_{i}(\mathbb{Z}_{q}/\mathfrak{m}_{q}^{n})\right|$. 
\end{lem}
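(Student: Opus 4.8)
The plan is to produce, for a suitable integer $N$, a surjection
$\coprod_i\widetilde U_i(\mathbb Z_q/\mathfrak m_q^n)\twoheadrightarrow X(\mathbb Z_q/\mathfrak m_q^n)$
for every prime power $q$ and every $n>N$; the claimed inequality then follows immediately. This surjection will be induced by the natural $\mathbb Z$-morphisms $\pi_i\colon\widetilde U_i\to X$ coming from the ring inclusions
$\mathbb Z[X]=\mathbb Z[x_1,\dots,x_c]/(f_1,\dots,f_m)\hookrightarrow\mathbb Z[x_1,\dots,x_c,t]/(f_1,\dots,f_m,\,Pg_it-DP)=\mathbb Z[\widetilde U_i]$
which are the identity on the $x_j$; on $R$-points, $\pi_i$ simply forgets the last coordinate, $(a,s)\mapsto a$. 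Writing $h_i:=Pg_i\in\mathbb Z[X]$ (recall that $P$ was chosen so this is integral) and multiplying the relation $\sum_i c_ig_i=D$ by $P$, one gets an identity $\sum_i c_ih_i=DP$ in $\mathbb Z[X]$ with all $c_i\in\mathbb Z[X]$ and $DP\in\mathbb Z\setminus\{0\}$; this ``integral partition of unity'' is the engine of the proof.

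\textbf{Choice of $N$ and the valuation estimate.} I would fix $N:=\max_{p}v_p(DP)$, which is finite since $DP$ is a fixed nonzero integer. Let $q=p^r$; as $\mathbb Q_q/\mathbb Q_p$ is unramified we have $\mathfrak m_q^n=p^n\mathbb Z_q$, and I write $v$ for the valuation on $\mathbb Z_q$ normalised by $v(p)=1$. Given $a=(\bar a_1,\dots,\bar a_c)\in X(\mathbb Z_q/\mathfrak m_q^n)$, choose a lift $\hat a\in\mathbb Z_q^c$; then $v(f_j(\hat a))\ge n$ for all $j$. Since $\sum_i c_ih_i=DP$ holds in $\mathbb Z[X]$, lifting to $\mathbb Z[x_1,\dots,x_c]$ the polynomial $\sum_i c_ih_i-DP$ lies in $(f_1,\dots,f_m)$, so evaluating at $\hat a$ and using $v(f_j(\hat a))\ge n$ gives $\sum_i c_i(\hat a)h_i(\hat a)\equiv DP\pmod{p^n\mathbb Z_q}$. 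Since $n>N\ge v(DP)$, the ultrametric inequality forces $v\bigl(\sum_i c_i(\hat a)h_i(\hat a)\bigr)=v(DP)$, hence some index $i_0$ satisfies $v\bigl(c_{i_0}(\hat a)h_{i_0}(\hat a)\bigr)\le v(DP)$; as $c_{i_0}(\hat a)\in\mathbb Z_q$ this yields $v(h_{i_0}(\hat a))\le v(DP)$, and in particular $h_{i_0}(\hat a)\ne0$.

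\textbf{Lifting to $\widetilde U_{i_0}$.} With this $i_0$ in hand, set $\hat s:=DP/h_{i_0}(\hat a)$; it lies in $\mathbb Z_q$ because $v(\hat s)=v(DP)-v(h_{i_0}(\hat a))\ge0$, and by construction $h_{i_0}(\hat a)\,\hat s=DP$ exactly in $\mathbb Z_q$. Reducing modulo $p^n$ and putting $s:=\hat s\bmod\mathfrak m_q^n$, the pair $(a,s)$ satisfies $f_j(a)=0$ and $Pg_{i_0}(a)\,s=DP$ in $\mathbb Z_q/\mathfrak m_q^n$, i.e. $(a,s)\in\widetilde U_{i_0}(\mathbb Z_q/\mathfrak m_q^n)$ with $\pi_{i_0}(a,s)=a$. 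This establishes the desired surjectivity and hence $|X(\mathbb Z_q/\mathfrak m_q^n)|\le\sum_i|\widetilde U_i(\mathbb Z_q/\mathfrak m_q^n)|$ for all $n>N$.

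\textbf{The main obstacle.} The one point that requires genuine care is that the threshold $N$ must work simultaneously for all primes $p$ and all unramified extensions $\mathbb Q_q/\mathbb Q_p$; this is exactly why it is essential that the partition of unity $\sum c_ig_i=D$ be defined over $\mathbb Z$, for then the obstruction to lifting is measured by $v_p(DP)$ with $DP$ a fixed integer, and $\max_p v_p(DP)\le\log_2|DP|<\infty$. I expect the remaining difficulty to be bookkeeping rather than conceptual: carefully tracking lifts from $\mathbb Z[X]$ to $\mathbb Z[x_1,\dots,x_c]$, and keeping straight that the factor $P$ cancels in $h_{i_0}(\hat a)\hat s=DP$ and therefore plays no role in the count itself, serving only to make the $\widetilde U_i$ the $\mathbb Z$-schemes set up above.
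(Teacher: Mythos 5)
Your proof is correct and follows essentially the same strategy as the paper's: use the partition-of-unity identity $\sum_i c_i\cdot(Pg_i)=DP$ to show that at any point of $X(\mathbb{Z}_q/\mathfrak{m}_q^n)$ (for $n$ past a fixed threshold) some $Pg_{i_0}$ has $p$-adic valuation at most $v_p(DP)$, and then solve $Pg_{i_0}\cdot t=DP$ to produce a preimage in $\widetilde U_{i_0}(\mathbb{Z}_q/\mathfrak{m}_q^n)$. The only differences are cosmetic: you lift to $\mathbb{Z}_q$ and solve the linear equation exactly there before reducing, and you take the slightly tighter threshold $N=\max_p v_p(DP)$ in place of the paper's $N=DP+1$.
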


\begin{proof}
Let $N(p)$ be the maximal integer such that $p^{N(p)}|D\cdot P$.
We first claim that for any $n>N(p)+1$ and $(a_{1},...,a_{c})\in X(\mathbb{Z}_{q}/\mathfrak{m}_{q}^{n})$,
there exists some $i$ such that $Pg_{i}(a_{1},...,a_{c})\notin\mathfrak{m}_{q}^{N(p)+1}/\mathfrak{m}_{q}^{n}$.
Indeed, if $Pg_{i}(a_{1},...,a_{c})\in\mathfrak{m}_{q}^{N(p)+1}/\mathfrak{m}_{q}^{n}$
for any $i$, then $\sum Pg_{i}(a_{1},...,a_{c})\cdot c_{i}(a_{1},...,a_{c})=D\cdot P\in\mathfrak{m}_{q}^{N(p)+1}/\mathfrak{m}_{q}^{n}$
and hence $p^{N(p)+1}|D\cdot P$ leading to a contradiction. Set $N:=D\cdot P+1$
and notice that $N>N(p)+1$ for any prime $p$. 

Fix $n>N$ and let $i$ such that $Pg_{i}(a_{1},...,a_{c})\notin\mathfrak{m}_{q}^{N(p)+1}/\mathfrak{m}_{q}^{n}$.
We now claim that the equation $Pg_{i}(a_{1},...,a_{c})t-PD=0$ has
a solution in $\mathbb{Z}_{q}/\mathfrak{m}_{q}^{n}$. Indeed, if $Pg_{i}(a_{1},...,a_{c})$
is invertible in $\mathbb{Z}_{q}/\mathfrak{m}_{q}^{n}$, we are done.
Otherwise, we have that $Pg_{i}(a_{1},...,a_{c})=p^{l}\cdot b\in\mathfrak{m}_{q}^{l}/\mathfrak{m}_{q}^{n}$
for some $l\leq N(p)$, where $b$ is invertible. Write $PD=p^{l}\cdot a$.
We can rewrite the equation as $p^{l}\cdot(bt-a)=0$, which has a
solution $d\in\mathbb{Z}_{q}/\mathfrak{m}_{q}^{n}$  since $b$ is
invertible. We see that for any $n>N$ and any $(a_{1},...,a_{c})\in X(\mathbb{Z}_{q}/\mathfrak{m}_{q}^{n})$
there exists $i$ and $d\in\mathbb{Z}_{q}/\mathfrak{m}_{q}^{n}$ such
that $(a_{1},...,a_{c},d)\in\widetilde{U}_{i}(\mathbb{Z}_{q}/\mathfrak{m}_{q}^{n})$.
This implies the lemma.
\end{proof}
Since $\left(\widetilde{U}_{i}\right)_{\mathbb{Q}}\simeq U_{i}$ is
a $(CIA)$ for any $i$, we obtain: 
\[
\frac{\left|X(\mathbb{Z}_{q}/\mathfrak{m}_{q}^{n})\right|}{q^{n\mathrm{dim}X_{\mathbb{Q}}}}\leq\sum_{i}\frac{\left|\widetilde{U}_{i}(\mathbb{Z}_{q}/\mathfrak{m}_{q}^{n})\right|}{q^{n\mathrm{dim}X_{\mathbb{Q}}}}<\sum C_{i},
\]
where $C_{i}=\underset{n}{\mathrm{sup}}h_{\widetilde{U}_{i}}(\mathbb{Z}_{q}/\mathfrak{m}_{q}^{n})$.
The implication $iii)\Longrightarrow iv')$ of Theorem \ref{Main extended theorem}
now follows.


\begin{thebibliography}{KKMS73}
\bibitem[AA16]{AA16} A. Aizenbud, N. Avni, \textit{Representation
growth and rational singularities of the moduli space of local systems},
Invent. Math. (2015). 

\bibitem[AA]{AA}\textit{ }A. Aizenbud, N. Avni\textit{, Counting
points of schemes over finite rings and counting representations of
arithmetic lattices,} arxiv 1502.07004.

\bibitem[Del74]{Del74} P. Deligne, \textit{La conjecture de Weil
I}, Publications Mathématiques de l'IHÉS 43/1 (1974).

\bibitem[Del80]{Del80} P. Deligne, \textit{La conjecture de Weil
II}, Publications Mathématiques de l'IHÉS 52/1 (1980).

\bibitem[Den91]{Den91} J. Denef, \textit{Report on Igusa's local
zeta function}, in Séminaire Bourbaki, 741 (1991).

\bibitem[DL98]{DL98} J. Denef, F. Loeser, \textit{Motivic Igusa zeta
functions}, J. Algebraic Geom. 7 (1998). 

\bibitem[dSG00]{dSG00} M. du Sautoy, F. Grunewald, \textit{Analytic
properties of zeta functions and subgroup growth}, Ann. of Math. (2)
152 (2000).

\bibitem[Dwo60]{Dwo60} B. Dwork, \textit{On the rationality of the
zeta function of an algebraic variety}, Amer. J. Math. 82 (1960).

\bibitem[Elk78]{Elk78} R. Elkik, \textit{Singularit}é\textit{s rationnelles
et d}é\textit{formations}, Invent. Math. 47 (1978).

\bibitem[GH]{GH} I. Glazer, Y.I. Hendel,\textit{ On singularity properties
of convolutions of algebraic morphisms,} in preperation\textit{. }

\bibitem[Gro65]{Gro65} A. Grothendieck, \textit{Formule de Lefschetz
et rationait}é\textit{ de fonctions L}, Séminaire Bourbaki 279 (1965).

\bibitem[Gro66]{Gro66} A. Grothendieck, É\textit{l}é\textit{ments
de G}é\textit{om}é\textit{trie Alg}é\textit{brique, IV}, Publications
Mathématiques de l'IHÉS 28 (1966).

\bibitem[Har77]{Har77} R. Hartshorne, \textit{Algebraic geometry},
Springer-Verlag, New York (1977). 

\bibitem[Hir64]{Hir64} H. Hironaka, \textit{Resolution of singularities
of an algebraic variety over a field of characteristic zero}. I, II.
Ann. of Math. 79/2, 109\textendash 326, (1964).

\bibitem[Igu00]{Igu00} J.i. Igusa, \textit{An introduction to the
theory of local zeta functions}, AMS/IP Studies in Advanced Mathematics,
(2000).

\bibitem[KKMS73]{KKMS73} G. Kempf, F. Knudsen, D. Mumford, B. Saint-Donat,
\textit{Toroidal Embeddings I}. Springer Lecture Notes. no. 339 (1973). 

\bibitem[LW54]{LW54} S. Lang, A. Weil, \textit{Number of points of
varieties in finite fields}, Amer. J. Math. 76 (1954).

\bibitem[Mus]{Mus} M. Mustata, \textit{Zeta functions in algebraic
geometry}, lecture notes, available at http://www.math.lsa.umich.edu/\textasciitilde{}mmustata/zeta\_book.pdf.

\bibitem[Vak]{Vak} R. Vakil, \textit{Foundations of algebraic geometry,}
lecture notes, available at http://math.stanford.edu/\textasciitilde{}vakil/216blog/index.html.
\end{thebibliography}
\end{document}